\DeclareFontFamily{U}{mathb}{\hyphenchar\font45}
\DeclareFontShape{U}{mathb}{m}{n}{
<-6> mathb5 <6-7> mathb6 <7-8> mathb7
<8-9> mathb8 <9-10> mathb9
<10-12> mathb10 <12-> mathb12
}{}
\DeclareSymbolFont{mathb}{U}{mathb}{m}{n}
\DeclareMathSymbol{\llcurly}{\mathrel}{mathb}{"CE}
\newcommand{\bZ}{\mbox{${\mathbb Z}$}}
\newcommand{\bC}{\mbox{${\mathbb C}$}}
\newcommand{\QQ}{\mbox{${\mathbf Q}_G$}}
\newcommand{\OQGl}[1]{[{\mathcal O}_{{\mathbf Q}_G}(#1)]}
\newcommand{\OQG}[1]{[{\mathcal O}_{{\mathbf Q}_G(#1)}]}
\newcommand{\Fh}{\mathfrak{h}}
\def\mytilde{\kern-.015in\hbox{\lower.03in\hbox{\~{}}}\kern-.01in}
\def\Waf{W_{\mathrm{af}}}
\def\Wafp{W_{\mathrm{af}}^{\ge 0}}
\DeclareMathOperator{\wend}{end}
\DeclareMathOperator{\hgt}{height}
\DeclareMathOperator{\wt}{wt}
\DeclareMathOperator{\sgn}{sgn}
\DeclareMathOperator{\dn}{down}
\newcommand{\casefour}[4]{\left\{ \negthickspace \begin{array}{ll} #1 &\mbox{if $#2$} \\ \\[-12pt] {#3} &\mbox{if $#4$}\end{array} \right.}
\newcommand{\QLS}{\mathop{\rm QLS}\nolimits}
\newcommand{\QB}{\mathop{\rm QB}\nolimits}
\newcommand{\Deg}{\mathop{\rm deg}\nolimits}
\newcommand{\rh}{\widehat{r}}
\newtheorem{thm}{Theorem}
\newtheorem{prop}[thm]{Proposition}
\newtheorem{dfn}[thm]{Definition}
\newtheorem{cor}[thm]{Corollary}
\newtheorem{lem}[thm]{Lemma}
\newtheorem{exa}[thm]{Example}
\newtheorem{rema}[thm]{Remark}
\newtheorem{claim}{Claim}[thm]
\newcommand{\bp}{{\bf p}}
\newcommand{\bq}{{\bf q}}
\newcommand{\cri}[1]{{#1}}
\newcommand{\revise}[1]{{#1}}
\newcommand{\edge}[1]{ \xrightarrow{\hspace{2pt}#1\hspace{2pt}} }
\newcommand{\mcr}[1]{\lfloor #1 \rfloor}
\newcommand{\pair}[2]{\langle #1,\,#2 \rangle}
\newcommand{\ha}[1]{\widehat{#1}}
\newcommand{\ol}[1]{\overline{#1}}
\newcommand{\ti}[1]{\widetilde{#1}}
\newcommand{\SRL}[1]{\mathsf{R}^{[#1]}}
\newcommand{\CA}{\mathcal{A}}
\newcommand{\SQ}{\mathsf{Q}}
\newcommand{\SX}{\mathsf{X}}
\newcommand{\SR}{\mathsf{R}}
\newcommand{\st}{\mathsf{t}}
\newcommand{\be}{\mathbf{e}}
\newcommand{\bk}{\mathbf{k}}
\newcommand{\bl}{\mathbf{l}}
\newcommand{\bG}{\mathbf{G}}
\newcommand{\af}{\mathrm{af}}
\newcommand{\Par}{\mathop{\rm Par}\nolimits}
\newcommand{\bchi}{\bm{\chi}}
\newcommand{\bqed}{\quad \hbox{\rule[-0.5pt]{3pt}{8pt}}}
\DeclareMathOperator{\Ht}{height}
\DeclareMathOperator{\ed}{end}
\newenvironment{enu}{%
 \begin{enumerate}%
}{\end{enumerate}}
\newlength{\cellsize}
\newcommand\tableau[1]{
\vcenter{
\let\\=\cr
\baselineskip=-16000pt
\lineskiplimit=16000pt
\lineskip=0pt
\halign{&\tableaucell{##}\cr#1\crcr}}}
\newcommand{\tableaucell}[1]{{%
\def \arg{#1}\def \void{}%
\ifx \void \arg
\vbox to \cellsize{\vfil \hrule width \cellsize height 0pt}%
\else
\unitlength=\cellsize
\begin{picture}(1,1)
\put(0,0){\makebox(1,1){$#1$}}
\put(0,0){\line(1,0){1}}
\put(0,1){\line(1,0){1}}
\put(0,0){\line(0,1){1}}
\put(1,0){\line(0,1){1}}
\end{picture}%
\fi}}
\let\choose\@@choose
\title[A general Chevalley formula for semi-infinite flag manifolds]{A general Chevalley formula for semi-infinite\\ flag manifolds and quantum $K$-theory}
\author[C.~Lenart]{Cristian Lenart}
\address[Cristian Lenart]{Department of Mathematics and Statistics, State University of New York at Albany, 
Albany, NY 12222, U.S.A.}
\email{clenart@albany.edu}
\author[S.~Naito]{Satoshi Naito}
\address[Satoshi Naito]{Department of Mathematics, Tokyo Institute of Technology,
2-12-1 Oh-Okayama, Meguro-ku, Tokyo 152-8551, Japan}
\email{naito@math.titech.ac.jp}
\author[D.~Sagaki]{Daisuke Sagaki}
\address[Daisuke Sagaki]{Department of Mathematics, 
Faculty of Pure and Applied Sciences, University of Tsukuba, 
1-1-1 Tennodai, Tsukuba, Ibaraki 305-8571, Japan}
\email{sagaki@math.tsukuba.ac.jp}
\begin{document}

\begin{abstract} We give a Chevalley formula for an arbitrary weight for the torus-equivariant $K$-group of semi-infinite flag manifolds, which is expressed in terms of the quantum alcove model. As an application, we prove the Chevalley formula for an anti-dominant fundamental weight for the (small) torus-equivariant quantum $K$-theory $QK_{T}(G/B)$ of a (finite-dimensional) flag manifold $G/B$; this has been a longstanding conjecture about the multiplicative structure of $QK_{T}(G/B)$. In type $A_{n-1}$, we prove that the so-called quantum Grothendieck polynomials indeed represent (opposite) Schubert classes in the (non-equivariant) quantum $K$-theory $QK(SL_{n}/B)$; we also obtain very explicit information about the coefficients in the respective Chevalley formula.
\end{abstract}

\keywords{semi-infinite flag manifold, quantum $K$-theory, Chevalley formula, quantum Bruhat graph, quantum LS paths, quantum alcove model, quantum Grothendieck polynomials. \newline 
Mathematics Subject Classification 2020: Primary 14M15, 14N15; Secondary 14N10, 05E14, 17B37, 81R10.}

\maketitle

\section{Introduction}

This paper is concerned with a geometric application of the combinatorial model known as the {\em quantum alcove model}, introduced in \cite{lalgam}. Its precursor, the alcove model of the first author and Postnikov, was used to uniformly describe the  highest weight {\em Kashiwara crystals} of symmetrizable Kac-Moody algebras \cite{LP1}, as well as the {\em Chevalley formula} for the equivariant $K$-theory of a (finite-dimensional) flag manifold $G/B$ \cite{LP}. More generally, the quantum alcove model was used to uniformly describe certain crystals of affine Lie algebras (single-column {\em Kirillov-Reshetikhin crystals}) and {\em Macdonald polynomials} specialized at $t=0$ \cite{lnsumk2,lnsumk3}. The objects of the quantum alcove model (indexing the crystal vertices and the terms of Macdonald polynomials) are paths in the {\em quantum Bruhat graph} on the Weyl group~\cite{BFP}. In this paper we complete the above picture, by extending to the quantum alcove model the geometric application of the alcove model, namely the $K$-theory Chevalley formula. 

To achieve our goal, we need to consider the so-called {\em semi-infinite flag manifold} $\QQ$ associated to a connected, simply-connected simple algebraic group $G$ over $\mathbb{C}$, 
with Borel subgroup $B$ and maximal torus $T \subset B$.
We give a Chevalley formula for an arbitrary weight for the {($T \times {\mathbb C}^*$)}-equivariant $K$-group $K_{T\times{\mathbb C}^*}(\QQ)$ of $\QQ$, which is described in terms of the quantum alcove model.
In~\cite{knsekt} and \cite{nospcf}, the Chevalley formulas for $K_{T\times{\mathbb C}^*}(\QQ)$ were originally given in terms of the {\em quantum LS path model} in the case of a dominant and an anti-dominant weight, respectively.
For a general (neither dominant nor anti-dominant) weight, there is no quantum LS path model, but there is a quantum alcove model.
Hence, in order to obtain a Chevalley formula for an arbitrary weight, we first need to translate the formulas above to the quantum alcove model by using the weight-preserving bijection between the two models given by Propositions~\ref{qam2qls} and~\ref{qam2qls1}. 
Starting from these translated formulas (Theorems~\ref{chevdomqam} and \ref{chevantidomqam}), we prove a Chevalley formula for $K_{T \times {\mathbb C}^{*}}(\QQ)$ (Theorem~\ref{genchev}) for an arbitrary weight, based on the combinatorics of the quantum alcove model. 
Furthermore, by examining this proof based on the Yang-Baxter equation for quantum Bruhat operators, we were able to generalize quantum Yang-Baxter moves for the quantum alcove model associated to a dominant weight (obtained in~\cite{lalurc}) to the case of an arbitrary weight; see \cite{KLN1} and \cite{KLN2}. 
Here we should add that an inverse Chevalley formula, which describes the structure of $K_{T \times {\mathbb C}^{*}}(\QQ)$ as a module over the representation ring of $T \times {\mathbb C}^{*}$, is obtained in \cite{O}, \cite{KNOS}, and \cite{LNOS} by an approach through the nil-DAHA (nil double affine Hecke algebra) in simply-laced types.

The study of the equivariant $K$-group of semi-infinite flag manifolds was started in~\cite{knsekt}. A breakthrough in this study is \cite{kat1} and \cite{kat2} (see also \cite{kat4}), in which Kato established a certain $\mathbb{Z}[P]$-module isomorphism from the (small) $T$-equivariant quantum $K$-theory $QK_{T}(G/B)$ of the finite-dimensional flag manifold $G/B$ onto the $T$-equivariant $K$-group $K_{T}(\QQ)$ of $\QQ$; here $P$ is the weight lattice generated by the fundamental weights $\varpi_{k}$, $k \in I$.
This isomorphism sends each (opposite) Schubert class in $QK_{T}(G/B)$ to the corresponding semi-infinite Schubert class in $K_{T}(\QQ)$; moreover, it respects the quantum multiplication in $QK_{T}(G/B)$ with the line bundle class $[\mathcal{O}_{G/B}(-\varpi_{k})]$
and the tensor product in $K_{T}(\QQ)$ with the line bundle class $\OQGl{w_{\circ} \varpi_{k}}$ for all $k \in I$, where $w_{\circ}$ is the longest element of the Weyl group $W$ of $G$.
Based on this result, a longstanding conjecture in~\cite{LP} on the multiplicative structure of $QK_{T}(G/B)$, i.e., the Chevalley formula (Theorem~\ref{qkchev}) for anti-dominant fundamental weights $- \varpi_{k}$, $k \in I$, for $QK_{T}(G/B)$, {is proved by our anti-dominant Chevalley formula for $K_{T \times \mathbb{C}^{*}}(\QQ)$ under the specialization at $q = 1$.} 
Also, from the anti-dominant Chevalley formula for $QK_{T}(G/B)$, we can deduce a Chevalley formula for anti-dominant fundamental weights $- \varpi_{k}$, $k \in I \setminus J$, for the $T$-equivariant quantum $K$-theory $QK_{T}(G/P_{J})$ of a partial flag manifold $G/P_{J}$, where $P_{J} \supset B$ is the parabolic subgroup corresponding to a subset $J \subset I$, by making use of the $\mathbb{Z}[P]$-algebra surjection from the polynomial version of $QK_{T}(G/B)$ onto that of $QK_{T}(G/P_{J})$ established in~\cite{kat3}; see \cite{KLNS} and \cite{KNS}. 

As another application of our Chevalley formula for $QK_{T}(G/B)$, we can prove an important conjecture in \cite{lamqgp} for the non-equivariant quantum $K$-theory $QK(SL_{n}/B)$ of the flag manifold $SL_{n}/B$ of type $A_{n-1}$ (Theorem~\ref{qgroth}):
the {\em quantum Grothendieck polynomials}, introduced in \cite{lamqgp}, indeed represent (opposite) Schubert classes in $QK(SL_{n}/B)$. In this way, we generalize  the results of~\cite{fgpqsp}, where the {\em quantum Schubert polynomials} are constructed as representatives for (opposite) Schubert classes in the quantum cohomology of $SL_{n}/B$.  
Therefore, we can use quantum Grothendieck polynomials to compute structure constants in $QK(SL_{n}/B)$ with respect to the (opposite) Schubert basis;
in actual calculations, we just need to expand their products in the basis they form, which is done by \cite[Algorithm~3.28]{lamqgp}; see \cite[Example~7.4]{lamqgp}. This is important, since computing even simple products in quantum $K$-theory is notoriously difficult. 
Also, in our recent preprint \cite{NS}, the second and third authors proved a Pieri-type multiplication formula for quantum Grothendieck polynomials (i.e., \cite[Conjecture~6.7]{lamqgp}), which is a vast generalization of the Chevalley formula (or, equivalently, Monk-type multiplication formula) and enables us to compute many structure constants to which the Chevalley formula does not apply.
Finally, still for $QK(SL_{n}/B)$, we obtain very explicit information about the coefficients in the respective Chevalley formula (Theorem~\ref{qk-coeff}, Proposition~\ref{alg-deg}, and Theorem~\ref{maxmindeg}). 

\subsection*{Acknowledgments}

 C.L. was partly supported by the NSF grant DMS-1855592 and the Simons Foundation grant \#584738. 
 S.N. was partly supported by JSPS Grant-in-Aid for Scientific Research (B) 16H03920 and (C) 21K03198. 
 D.S. was partly supported by JSPS Grant-in-Aid for Scientific Research (C) 15K04803 and 19K03415. 
An extended abstract of this work has appeared in the Proceedings of the 33rd international conference on
Formal Power Series and Algebraic Combinatorics~\cite{lnsccf}.

\section{Background on the quantum Bruhat graph and its parabolic version}\label{backg}

\subsection{Root systems}

Let $\mathfrak{g}$ be a finite-dimensional simple Lie algebra over $\mathbb{C}$
with Cartan subalgebra $\mathfrak{h}$. 
Denote by $\Pi^{\vee} = \{ \alpha_{i}^{\vee} \}_{i \in I}$ and 
$\Pi = \{ \alpha_{i} \}_{i \in I}$ the set of simple coroots and 
simple roots of $\mathfrak{g}$, respectively, and set
%
$Q^{\vee} := \sum_{i \in I} \mathbb{Z} \alpha_i^{\vee}$, 
$Q^{\vee,+} := \sum_{i \in I} \mathbb{Z}_{\ge 0} \alpha_i^{\vee}$. 
Let $\Phi$, $\Phi^{+}$, and $\Phi^{-}$ be 
the set of roots, positive roots, and negative roots of $\mathfrak{g}$, respectively, 
with $\theta \in \Phi^{+}$ the highest root for the set $\Phi$ of roots of $\mathfrak{g}$; 
we set $\rho:=(1/2) \sum_{\alpha \in \Phi^{+}} \alpha$. For $\alpha \in \Phi$, we set 
\begin{equation*}
\sgn(\alpha):=
 \begin{cases}
  1 & \text{if $\alpha \in \Phi^{+}$}, \\[1mm]
  -1 & \text{if $\alpha \in \Phi^{-}$}, 
 \end{cases}
\qquad 
|\alpha|:=\sgn(\alpha) \alpha \in \Phi^{+}, 
\end{equation*}
and denote by $\alpha^{\vee}$ the coroot of $\alpha$. 
Also, let $\varpi_{i}$, $i \in I$, denote the fundamental weights for $\mathfrak{g}$, and set
$P:=\sum_{i \in I} \mathbb{Z} \varpi_{i}$ and 
$P^{+} := \sum_{i \in I} \mathbb{Z}_{\ge 0} \varpi_{i}$. 
Let $W := \langle s_{i} \mid i \in I \rangle$ be the (finite) Weyl group of $\mathfrak{g}$, 
where $s_{i}$ is the simple reflection with respect to $\alpha_{i}$ for $i \in I$. 
We denote by $\ell:W \rightarrow \mathbb{Z}_{\ge 0}$ 
the length function on $W$, by $e \in W$ the identity element, 
and by $w_{\circ} \in W$ the longest element. For $\alpha \in \Phi$, 
denote by $s_{\alpha} \in W$ the reflection with respect to $\alpha$; note that $s_{-\alpha}=s_{\alpha}$. 

Let $J$ be a subset of $I$. 
We set $Q_{J} := \sum_{i \in J} \mathbb{Z} \alpha_i$, 
$\Phi_{J} := \Phi \cap Q_{J}$, $\Phi_{J}^{\pm} := \Phi^{\pm} \cap Q_{J}$, 
$\rho_{J}:=(1/2) \sum_{\alpha \in \Phi_{J}^{+}} \alpha$. 
We denote by $W_J:= \langle s_{i} \mid i \in J \rangle$ the parabolic subgroup of $W$ corresponding to $J$, 
and we identify $W/W_J$ with the corresponding set of minimal coset representatives, denoted by $W^J$; 
note that if $J=\emptyset$, then $W^{J}=W^{\emptyset}$ is identical to $W$. 
For $w \in W$, we denote by $\mcr{w}=\mcr{w}^{J} \in W^{J}$ 
the minimal coset representative for the coset $w W_{J}$ in $W/W_{J}$.

Let $W_{\mathrm{af}} := 
\langle s_{i} \mid i \in I_{\mathrm{af}} \rangle$, with $I_{\mathrm{af}} := I \sqcup \{0\}$, be the (affine) Weyl group 
of the untwisted affine Lie algebra $\mathfrak{g}_{\mathrm{af}}$ associated to $\mathfrak{g}$. 
For each $\xi \in Q^{\vee}$, let $t_{\xi} \in \Waf$ denote the translation 
by $\xi$ (see \cite[Section~6.5]{Kac}). Then, $\bigl\{ t_{\xi} \mid \xi \in Q^{\vee} \bigr\}$ forms 
an abelian normal subgroup of $\Waf$, in which $t_{\xi} t_{\zeta} = t_{\xi + \zeta}$ 
holds for $\xi,\,\zeta \in Q^{\vee}$. Moreover, we know from \cite[Proposition 6.5]{Kac} that
\begin{equation*}
\Waf \cong 
 W \ltimes \bigl\{ t_{\xi} \mid \xi \in Q^{\vee} \bigr\} \cong W \ltimes Q^{\vee}; 
\end{equation*}
note that $s_{0}=s_{\theta}t_{-\theta^{\vee}}$. We set 
$\Wafp := W \times Q^{\vee,+}$, which is a subset of $\Waf$. 


\subsection{The quantum Bruhat graph}


We take and fix a subset $J$ of $I$. 

\begin{dfn}
The (parabolic) quantum Bruhat graph $\QB(W^{J})$ is 
the $(\Phi^{+} \setminus \Phi_{J}^{+})$-labeled
directed graph whose vertices are the elements of $W^{J}$, and 
whose directed edges are of the form{\rm :} $w \edge{\beta} v$ 
for $w,v \in W^{J}$ and $\beta \in \Phi^{+} \setminus \Phi_{J}^{+}$ 
such that $v= \mcr{ws_{\beta}}$, and such that either of 
the following holds{\rm :} 
{\rm (i)} $\ell(v) = \ell (w) + 1${\rm ;} 
{\rm (ii)} $\ell(v) = \ell (w) + 1 - 2 \pair{\rho-\rho_{J}}{\beta^{\vee}}$.
An edge satisfying {\rm (i)} (resp., {\rm (ii)}) is called a Bruhat (resp., quantum) edge. 
\end{dfn}

When $J=\emptyset$, we write $\QB(W)$ for $\QB(W^{\emptyset})$; 
note that in this case, $W^{\emptyset}=W$, $\Phi_{\emptyset}^{+}=\emptyset$, 
$\rho_{\emptyset}=0$, and that $\mcr{w}=w$ for all $w \in W$. The quantum Bruhat graph $\QB(W)$ originates in the Chevalley formula for the quantum cohomology of flag manifolds~\cite{fawqps}.
%
%
\begin{rema}[{see \cite[Remark~6.13]{lnsumk1}}] \label{rem:PQBG}
For each $v,\,w \in W^{J}$, 
there exists a directed path in $\QB(W^{J})$ from $v$ to $w$.
\end{rema}

For a directed path 
$\bp: v=
 v_{0} \edge{\beta_{1}}
 v_{1} \edge{\beta_{2}} \cdots 
       \edge{\beta_{l}}
 v_{l}=w$
in $\QB(W^{J})$, we define the weight $\wt^{J}(\bp)$ of $\bp$ by
\begin{equation*}
\wt^{J}(\bp) := \sum_{
 \begin{subarray}{c}
 1 \le k \le l\,; \\
 \text{$v_{k-1} \edge{\beta_{k}} v_{k}$ is} \\
 \text{a quantum edge}
 \end{subarray}}
\beta_{k}^{\vee} \in Q^{\vee,+}; 
\end{equation*}
when $J=\emptyset$, we write $\wt(\bp)$ for $\wt^{\emptyset}(\bp)$. 
We know the following from \cite[Proposition 8.1]{lnsumk1}. 
%
%
\begin{prop} \label{prop:shortest}
Let $v,\,w \in W^{J}$. If $\bp$ and $\bq$ are 
shortest directed paths in $\QB(W^{J})$ from $v$ to $w$, then  
$\wt^{J}(\bp) \equiv \wt^{J}(\bq)$ modulo $Q_{J}^{\vee}$. 
In particular, if $J=\emptyset$, then $\wt(\bp)=\wt(\bq)$. 
\end{prop}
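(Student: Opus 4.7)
The plan is to first handle the non-parabolic case $J=\emptyset$ and then reduce the parabolic statement to it by a $W_J$-coset lifting argument.

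For $J=\emptyset$, a telescoping sum of the edgewise length changes along any path $\bp: v=v_0 \edge{\beta_1} \cdots \edge{\beta_l} v_l=w$ in $\QB(W)$ yields
\[
\ell(w)-\ell(v) \;=\; l \;-\; 2\langle \rho,\wt(\bp)\rangle,
\]
since each Bruhat edge contributes $+1$ to the length difference and a quantum edge labeled $\beta$ contributes $1-2\langle\rho,\beta^\vee\rangle$. So any two shortest paths $\bp,\bq$ automatically satisfy $\langle\rho,\wt(\bp)\rangle=\langle\rho,\wt(\bq)\rangle$, but this controls only one linear functional of $\wt(\bp)\in Q^\vee$. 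To promote it to the genuine equality $\wt(\bp)=\wt(\bq)$, I would invoke the shortest-path diamond property of $\QB(W)$ (in the spirit of Postnikov and Brenti--Fomin--Postnikov): any two shortest directed paths with the same endpoints are connected by a sequence of local 2-step exchanges of consecutive edges $v_{k-1}\edge{\beta}v_k\edge{\gamma}v_{k+1}$, each exchange supported in a rank-two sub-root system, and each preserving the multiset of quantum-edge coroots. The latter invariance reduces to an explicit check inside each rank-two type ($A_1\times A_1$, $A_2$, $B_2$, $G_2$) of ``both ways around a quantum rank-two diamond give the same sum of quantum-edge coroots,'' and yields the desired $Q^\vee$-equality.

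For general $J$, I proceed by coset lifting. Given $\bp$ in $\QB(W^J)$ with edges $v_{k-1}\edge{\beta_k}v_k$, the definition of $\QB(W^J)$ gives $v_{k-1}s_{\beta_k}=v_k u_k$ for some $u_k\in W_J$. Splicing in a shortest $\QB(W_J)$-path from $v_k u_k$ back to $v_k$ inside the Levi (which exists by Remark~\ref{rem:PQBG} applied to $W_J$, and whose edges are labeled by roots in $\Phi_J^+$) produces a $\QB(W)$-lift $\widetilde{\bp}$ from $v$ to $w$ satisfying $\wt(\widetilde{\bp})\equiv\wt^J(\bp)\pmod{Q_J^\vee}$. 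Doing the same for $\bq$ and choosing all inserted $W_J$-corrections of minimal length forces both lifts to be shortest $\QB(W)$-paths from $v$ to $w$; the $J=\emptyset$ conclusion then gives $\wt(\widetilde{\bp})=\wt(\widetilde{\bq})$, which descends to the claimed congruence $\wt^J(\bp)\equiv\wt^J(\bq)\pmod{Q_J^\vee}$.

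The main obstacle is the rank-two diamond step in the $J=\emptyset$ case: the length identity is an immediate bookkeeping consequence of the definitions, but turning the invariance of $\langle\rho,\wt(\bp)\rangle$ into invariance of $\wt(\bp)$ itself requires genuine, if standard, root-system input and a uniform-across-types diamond exchange. The parabolic reduction in contrast is essentially combinatorial, the only delicate point being the choice of $W_J$-corrections short enough that the lifts $\widetilde{\bp},\widetilde{\bq}$ are themselves shortest in $\QB(W)$.
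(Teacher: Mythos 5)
The paper does not actually prove Proposition~\ref{prop:shortest}; it cites \cite[Proposition~8.1]{lnsumk1}. Your proposal is a plausible outline for a direct proof, but it has two genuine gaps. In the $J=\emptyset$ case, the telescoping identity $\ell(w)-\ell(v)=l-2\langle\rho,\wt(\bp)\rangle$ is correct, and you rightly observe that it only pins down $\langle\rho,\wt(\bp)\rangle$. The ``shortest-path diamond property'' that you then invoke, however, is not an available theorem: BFP's Theorem~\ref{thm:shell} gives the unique increasing shortest path for each reflection ordering, and their Corollary~4.4 gives a Yang--Baxter relation at the level of operators, but neither statement directly asserts that any two shortest directed paths in $\QB(W)$ with the same endpoints are connected by local rank-two exchanges of consecutive edges. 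Establishing that polygon-exchange property (or otherwise comparing an arbitrary shortest path to the unique increasing one) is essentially the content of the result in the $J=\emptyset$ case; invoking it rather than proving it leaves the core step unaddressed.

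In the parabolic reduction, the coset lift is not validated at its very first step. For a Bruhat edge $v_{k-1}\edge{\beta_k}v_k$ of $\QB(W^J)$ one indeed has $u_k=e$ (by the lifting property of Bruhat covers across parabolic quotients), so that edge lifts verbatim. But for a quantum edge one can have $u_k\ne e$, and then $v_{k-1}\edge{\beta_k}v_{k-1}s_{\beta_k}$ is an edge of $\QB(W)$ only if $\ell(v_{k-1}s_{\beta_k})-\ell(v_{k-1})=1-2\langle\rho,\beta_k^\vee\rangle$, which amounts to the identity $\ell(u_k)=-2\langle\rho_J,\beta_k^\vee\rangle$; this is a nontrivial fact that you neither prove nor cite, and without it the putative lift need not even be a path in $\QB(W)$. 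More seriously, the assertion that ``choosing all inserted $W_J$-corrections of minimal length forces both lifts to be shortest $\QB(W)$-paths'' is unsubstantiated: there is no a priori identity $\ell(v\Rightarrow w)=\ell^J(v\Rightarrow w)+(\text{total correction length})$, and the relation between shortest paths in $\QB(W^J)$ and in $\QB(W)$ is delicate (cf.\ the care taken in Lemma~\ref{relwt} and Lemma~\ref{q2ls}, which only transfer information in the opposite direction and only modulo pairings with $\lambda$). Without shortestness of $\widetilde{\bp}$ and $\widetilde{\bq}$ in $\QB(W)$, the $J=\emptyset$ conclusion cannot be applied to them, and the reduction does not close.
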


For $v,\,w \in W^{J}$, we denote by $\ell^{J}(v \Rightarrow w)$ 
the length of a shortest directed path in $\QB(W^{J})$ from $w$ to $v$. 
When $J=\emptyset$, we write $\ell(v \Rightarrow w)$ for $\ell^{\emptyset}(v \Rightarrow w)$. 


Assume that $J=\emptyset$. In this case, we denote by 
$\wt(w \Rightarrow v)$ the weight $\wt(\bp)$ of a shortest directed path 
in $\QB(W)$ from $w$ to $v$, which is independent of the choice of 
a shortest directed path by Proposition~\ref{prop:shortest}. Also, 
we will use the {\em shellability} of the quantum Bruhat graph $\QB(W)$ 
with respect to a {\em reflection order} on the positive 
roots~\cite{Dyer}, which we now recall. 
%
%
\begin{thm}[\cite{BFP}] \label{thm:shell} 
Fix a reflection order on $\Phi^+$.
\begin{enumerate}
\item For any pair of elements $v,w\in W$, there is a unique directed path from $v$ to $w$ in the quantum 
Bruhat graph $\QB(W)$ such that its sequence of edge labels is strictly increasing (resp., decreasing) 
with respect to the reflection order.
\item The path in {\rm (1)} has the smallest possible length $\ell(v\Rightarrow w)$.
\end{enumerate}
\end{thm}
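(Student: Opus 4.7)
The plan is to follow the Brenti--Fomin--Postnikov strategy: build the desired monotone path by sorting an arbitrary directed path via local moves, and prove uniqueness and minimality simultaneously from the structure of these moves.

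First I would establish a \emph{diamond lemma} for $\QB(W)$. Concretely: given any vertex $v$ with two distinct outgoing edges $v \edge{\alpha} u$ and $v \edge{\beta} u'$ such that $\alpha < \beta$ in the reflection ordering, and given any directed path from $u$ to some target $w$, one can construct a directed path from $u'$ to $w$ whose first edge has a label larger than its second, together with a matching path from $u$ to $w$ starting with smaller labels; informally, every length-two ``inversion'' at a vertex can be locally rewritten. The verification splits into four cases according to whether each edge is Bruhat or quantum. The key algebraic input is the interplay between the reflection ordering on $\Phi^+$ and the characterization of edges via $\ell(v) - \ell(w) = \pm 1 - \text{(covector pairing)}$, and one uses standard root-system identities such as $s_\alpha \beta = \beta - \pair{\beta}{\alpha^\vee}\alpha$ to identify the vertex at the ``other corner'' of the diamond. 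This is the most technical step and I expect it to be the main obstacle.

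Next, for existence in part (1), I would start with any directed path from $v$ to $w$ (which exists by Remark~\ref{rem:PQBG} applied with $J=\emptyset$) and run a bubble-sort using the diamond lemma: whenever two consecutive labels $\beta_k, \beta_{k+1}$ satisfy $\beta_k > \beta_{k+1}$, apply the local move to replace them by an increasing pair, possibly changing intermediate vertices and the path length. I would argue that a suitable statistic (for instance, the number of inversions among the labels, combined with a tiebreak by length) strictly decreases, so the process terminates in an increasing path. An important point to verify is that the local move never \emph{increases} path length; this requires showing that replacing a decreasing pair by an increasing pair either preserves length or shortens the path by two.

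For uniqueness, I would induct on $\ell(v \Rightarrow w)$. If two increasing paths $\bp, \bp'$ from $v$ to $w$ have the same first edge, induction finishes the argument after deleting that edge. Otherwise their first edges have labels $\alpha < \alpha'$; I would then apply the diamond lemma to the initial segment of $\bp'$ to extract an alternative path starting with $\alpha$, and use the fact that \emph{all} subsequent labels on $\bp$ exceed $\alpha$ (and similarly for the new path) to force a contradiction at the second edge. Part (2) then follows immediately: the bubble-sort never increased length, so the unique increasing path has length at most $\ell(v \Rightarrow w)$, hence equal. Finally, the decreasing version of (1) is obtained by replacing the reflection ordering by its reverse, which is again a reflection ordering, so the increasing-path result transfers directly.
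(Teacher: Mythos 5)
The paper does not prove this theorem; it cites it as a result of \cite{BFP} (essentially Theorem~6.4 there), whose proof rests on the Yang--Baxter equation for the quantum Bruhat operators established earlier in that paper, together with Dyer's reflection-ordering machinery \cite{Dyer}. So there is no in-paper argument to compare your proposal against, but the plan has a genuine gap in its central step.

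The ``diamond lemma'' you formulate---resolving a descent $\beta_k > \beta_{k+1}$ in a path by replacing the two offending edges with ``an increasing pair,'' with a single vertex at ``the other corner of the diamond''---is not what actually happens in $\QB(W)$. Already in type $A_2$, take the reflection ordering $\alpha_1 < \alpha_1+\alpha_2 < \alpha_2$ on $\Phi^+$ and the path $s_1 s_2 \edge{\alpha_2} s_1 \edge{\alpha_1} e$, a length-two path with a single descent. The unique increasing path with the same endpoints is $s_1 s_2 \edge{\alpha_1} w_\circ \edge{\alpha_1+\alpha_2} e$: the label set changes from $\{\alpha_1,\alpha_2\}$ to $\{\alpha_1,\alpha_1+\alpha_2\}$, and in types $B_2$ and $G_2$ the number of steps can change as well. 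The correct local lemma therefore has to be stated at the level of the full rank-$2$ subsystem $\Phi' := \Phi \cap (\mathbb{Z}\beta_k + \mathbb{Z}\beta_{k+1})$: for any two vertices joined by a $\Phi'$-labeled directed path, there is a unique $\Phi'$-labeled directed path between them with increasing labels, and its length is minimal among $\Phi'$-labeled paths. This rank-$2$ shellability statement is precisely the combinatorial content of the Yang--Baxter equation in \cite{BFP}, and verifying it is a finite but nontrivial case-check over the four rank-$2$ root system types; it cannot be reduced to the four Bruhat/quantum cases you enumerate for a pair of edges. Two further consequences of the mis-formulation: since the rewrite changes the label multiset, ``number of inversions among the labels'' is not manifestly a decreasing statistic for your bubble sort (one needs a statistic robust to relabeling, as in Dyer's argument); and the uniqueness sketch does not close, because applying the rank-$2$ rewrite to the initial segment of $\bp'$ yields a path whose first label need not match that of $\bp$ and which is no longer globally increasing, so no contradiction materializes at the second edge. (Your remark that the decreasing case follows by reversing the reflection ordering is correct, and your observation that part~(2) follows once the rewrite is shown never to increase length is also the right idea.)
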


\subsection{Additional results}



In this subsection, we fix a dominant weight 
$\lambda \in P^{+}$, and set 
$J=J_{\lambda}:= 
\bigl\{ i \in I \mid \pair{\lambda}{\alpha_{i}^{\vee}}=0 \bigr\} \subset I$. 
Let $v,w \in W^{J}$, and let $\bp$ be a shortest directed path in $\QB(W^{J})$ 
from $v$ to $w$. Then we deduce by Proposition~\ref{prop:shortest} that 
$\pair{\lambda}{\wt^{J}(\bp)}$ does not depend on the choice of a shortest directed path $\bp$. 
We write $\pair{\lambda}{\wt^{J}(v \Rightarrow w)}$ for $\pair{\lambda}{\wt^{J}(\bp)}$. 
%
%
\begin{lem}[{\cite[Lemma~7.2]{lnsumk2}}] \label{relwt} 
Keep the notation and setting above. 
Let $\sigma,\,\tau \in W^J$. Then, 
$\pair{\lambda}{\wt^{J}(\sigma \Rightarrow \tau)} = 
 \pair{\lambda}{\wt(v \Rightarrow w)}$ 
for all $v \in \sigma W_J$, $w\in\tau W_J$. 
\end{lem}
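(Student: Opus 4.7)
The plan is to reduce the claimed equality of integers to a congruence of coroots modulo $Q_J^\vee$, and then exploit the fact that $\lambda$ annihilates $Q_J^\vee$. Since $J = J_\lambda$, for every $i \in J$ we have $\pair{\lambda}{\alpha_i^\vee} = 0$, hence $\pair{\lambda}{\xi} = 0$ for all $\xi \in Q_J^\vee$. By Proposition~\ref{prop:shortest}, the weight $\wt^J(\sigma \Rightarrow \tau)$ is well-defined modulo $Q_J^\vee$, so $\pair{\lambda}{\wt^J(\sigma \Rightarrow \tau)}$ is an unambiguous integer; similarly, $\wt(v \Rightarrow w) \in Q^\vee$ is well-defined by the second statement of the same proposition. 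It therefore suffices to establish the congruence $\wt(v \Rightarrow w) \equiv \wt^J(\sigma \Rightarrow \tau) \pmod{Q_J^\vee}$.

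The core of the argument is a comparison between shortest paths in $\QB(W)$ and in $\QB(W^J)$ via projection onto cosets. Starting from a shortest directed path $\bp \colon v = v_0 \edge{\beta_1} v_1 \edge{\beta_2} \cdots \edge{\beta_l} v_l = w$ in $\QB(W)$, I would project to the sequence $\mcr{v_0}, \mcr{v_1}, \ldots, \mcr{v_l}$ of minimal coset representatives and discard consecutive duplicates. The edges with $\beta_k \in \Phi_J^+$ are precisely those collapsed by projection (they stay within a single $W_J$-coset), while the edges with $\beta_k \in \Phi^+ \setminus \Phi_J^+$ should descend to genuine edges of $\QB(W^J)$ between consecutive surviving cosets, carrying the same labels. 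Consequently, the contribution to $\wt(\bp)$ from the collapsed quantum edges lies in $Q_J^\vee$, while the contribution from the surviving quantum edges matches the weight of the projected path $\bar{\bp}$ in $\QB(W^J)$, giving $\wt(\bp) \equiv \wt^J(\bar{\bp}) \pmod{Q_J^\vee}$.

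To conclude, I need $\bar{\bp}$ to be itself a shortest directed path in $\QB(W^J)$ from $\sigma$ to $\tau$. This is the main obstacle I anticipate: the definition of a quantum edge in $\QB(W^J)$ uses $\pair{\rho - \rho_J}{\beta^\vee}$ instead of $\pair{\rho}{\beta^\vee}$, so edge types can shift under projection, and one must rule out shortcuts in $\QB(W^J)$. My intended approach is to choose $\bp$ to be the unique path with strictly increasing edge labels in a fixed reflection ordering (Theorem~\ref{thm:shell}), restrict the ordering to $\Phi^+ \setminus \Phi_J^+$, and verify that the projection $\bar{\bp}$ is still strictly increasing in the restricted ordering; a parabolic analog of the shellability statement then forces $\bar{\bp}$ to be shortest. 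Once this is in hand, Proposition~\ref{prop:shortest} gives $\wt^J(\bar{\bp}) \equiv \wt^J(\sigma \Rightarrow \tau) \pmod{Q_J^\vee}$, and combining with the previous congruence and pairing with $\lambda$ yields the lemma.
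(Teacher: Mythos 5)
The paper does not prove this statement; it is cited verbatim from \cite[Lemma~7.2]{lnsumk2}, so there is no in-paper proof to compare against. I will therefore assess your argument on its own merits.

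Your overall strategy (reduce to a congruence modulo $Q_J^\vee$, then compare a shortest path in $\QB(W)$ with its ``projection'' to $\QB(W^J)$) is reasonable, and you are right that the crux is making sense of the projection. However, the key step --- that an edge $v_{k-1} \edge{\beta_k} v_k$ of $\QB(W)$ with $\beta_k \in \Phi^+\setminus\Phi_J^+$ ``descends to a genuine edge of $\QB(W^J)$ between consecutive surviving cosets, carrying the same label'' --- is false as stated, and the paper itself flags this: Lemma~\ref{q2ls} only guarantees a directed \emph{path} (possibly of length $0$, and possibly of length $>1$) in $\QB_{b\lambda}(W^J)$ between $\mcr{v_{k-1}}$ and $\mcr{v_k}$. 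A concrete counterexample already occurs in $W=S_3$ with $J=\{2\}$: the Bruhat edge $s_2 \edge{\alpha_1} s_2s_1$ has $\mcr{s_2}=e$ and $\mcr{s_2s_1}=s_2s_1$, with $\ell(s_2s_1)-\ell(e)=2$, and one checks directly that $e\to s_2s_1$ is neither a Bruhat nor a quantum edge in $\QB(W^J)$; rather, the projection is the length-$2$ path $e\edge{\alpha_1}s_1\edge{\alpha_1+\alpha_2}s_2s_1$. Note also that the labels do \emph{not} stay the same under projection: writing $v_{k-1}=\mcr{v_{k-1}}\,u$ with $u\in W_J$, the natural candidate label is $u\beta_k$, not $\beta_k$.

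Consequently, the congruence $\wt(\bp)\equiv\wt^J(\bar\bp)\pmod{Q_J^\vee}$ is not a matter of bookkeeping: you would need to analyze, for each surviving step, how the weight of the (possibly multi-edge) projected path compares to $\beta_k^\vee\bmod Q_J^\vee$, and in particular account for quantum edges that could appear or disappear under projection (the quantum condition in $\QB(W^J)$ uses $\rho-\rho_J$, not $\rho$, as you yourself point out). The remedy you sketch --- choosing $\bp$ with strictly increasing labels in $<_\lambda$ so that the $\Phi^+\setminus\Phi_J^+$ segment comes first, and invoking a parabolic shellability statement --- is in the right direction (this is what Lemma~\ref{L:mainb} is about), but as written it does not fix the edge-by-edge claim, and the ``parabolic analog of the shellability statement'' needed to conclude $\bar\bp$ is shortest is not established. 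In short, the skeleton is plausible, but the central projection/weight-matching step is a genuine gap and would require an argument of roughly the same depth as the lemma itself.
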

%
%
\begin{dfn} \label{dfn:QBa}
For a rational number $b \in \mathbb{Q}$, 
we define $\QB_{b\lambda}(W^{J})$ (resp., $\QB_{b\lambda}(W)$) 
to be the subgraph of $\QB(W^{J})$ (resp., $\QB(W)$) 
with the same vertex set but having only those directed edges of 
the form $w \edge{\beta} v$ for which 
$b \pair{\lambda}{\beta^{\vee}} \in \mathbb{Z}$ holds.
\end{dfn}

\begin{lem}[{\cite[Lemma~6.2]{lnsumk2}}] \label{q2ls} 
Keep the notation and setting above. 
Let $w \edge{\gamma} ws_\gamma$ be an edge in $\QB_{b\lambda}(W)$ 
for some rational number $b$. Then there exists a directed path 
from $\mcr{w}$ to $\mcr{w s_\gamma}$ in $\QB_{b\lambda}(W^J)$ 
(possibly of length $0$).
\end{lem}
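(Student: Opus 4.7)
The strategy is to reduce the question to the parabolic quotient $W^J$ by factoring $w = \mcr{w}\, u$ with $u\in W_J$, and then projecting the given edge of $\QB(W)$ to an edge (or a short directed path) in $\QB(W^J)$, exploiting the fact that $J = J_\lambda$ forces $W_J$ to stabilize $\lambda$.

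First, if $\gamma\in\Phi_J$, then $r_\gamma\in W_J$, so $wr_\gamma\in wW_J$ and $\mcr{wr_\gamma} = \mcr{w}$; the trivial path of length $0$ suffices. Otherwise $\gamma\in\Phi\setminus\Phi_J$: write $w = \mcr{w}\, u$ with $u \in W_J$ and set $\beta := u\gamma$. Since $u r_\gamma = r_{u\gamma}\, u = r_\beta\, u$, we have
\[
wr_\gamma \;=\; \mcr{w}\, u\, r_\gamma \;=\; \mcr{w}\, r_\beta\, u,
\]
and because right-multiplication by $u\in W_J$ preserves cosets, $\mcr{wr_\gamma} = \mcr{\mcr{w}\, r_\beta}$. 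Since $u\in W_J = W_{J_\lambda}$ fixes $\lambda$, we have $\pair{\lambda}{\beta^\vee} = \pair{\lambda}{\gamma^\vee}$, hence $b\pair{\lambda}{\beta^\vee}\in\mathbb{Z}$. This reduces the problem to constructing a suitable path starting from the minimal coset representative and using $r_\beta$.

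It remains to produce a directed path from $v := \mcr{w}$ to $\mcr{vr_\beta}$ in $\QB(W^J)$ all of whose edge labels $\beta'$ satisfy $b\pair{\lambda}{(\beta')^\vee}\in\mathbb{Z}$. For this I would appeal to a projection lemma for parabolic quantum Bruhat graphs: an edge $w\edge{\gamma}wr_\gamma$ of $\QB(W)$ with $\gamma\notin\Phi_J$ descends to a (possibly multi-step) directed path in $\QB(W^J)$ joining $\mcr{w}$ to $\mcr{wr_\gamma}$ whose labels are $W_J$-conjugates of $\beta$. The parabolic length condition (Bruhat: $\ell(\cdot)=\ell(\cdot)+1$; quantum: $\ell(\cdot)=\ell(\cdot)+1-2\pair{\rho-\rho_J}{(\beta')^\vee}$) is then verified from the original condition for $w\edge{\gamma}wr_\gamma$ by length bookkeeping based on $\ell(x)=\ell(\mcr{x})+\ell(\mcr{x}^{-1}x)$ for $x\in W$.

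The main obstacle is precisely this last step: an edge of $\QB(W)$ does not always project to a single edge of $\QB(W^J)$ and may split into a short path. One must verify that every intermediate label $\beta'$ arising in the projection satisfies $b\pair{\lambda}{(\beta')^\vee}\in\mathbb{Z}$. The key leverage is the $W_J$-invariance of $\lambda$: since each such $\beta'$ can be written as $u'\beta$ for some $u'\in W_J$, one has $\pair{\lambda}{(\beta')^\vee} = \pair{\lambda}{\beta^\vee} = \pair{\lambda}{\gamma^\vee}$, so integrality propagates through all labels and the entire path lies inside $\QB_{b\lambda}(W^J)$, as required.
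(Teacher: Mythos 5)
The paper itself does not prove this lemma; it is quoted verbatim from \cite[Lemma~6.2]{lnsumk2}, so there is no in-paper argument to compare against. Judging your proposal on its own terms, the framing is reasonable but there is a genuine unclosed gap.

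Your reduction is fine: the case $\gamma\in\Phi_J$ is handled correctly (then $r_\gamma\in W_J$, so the cosets agree and the trivial path works), and the conjugation identity $u\,r_\gamma=r_{u\gamma}\,u$ correctly shows that $\mcr{wr_\gamma}=\mcr{\mcr{w}\,r_\beta}$ for $\beta=u\gamma$; the $W_J$-invariance of $\lambda$ then does indeed give $\pair{\lambda}{\beta^\vee}=\pair{\lambda}{\gamma^\vee}$. All of that is a clean way to pass from the edge datum $(w,\gamma)$ to coset-level data $(\mcr{w},\beta)$ while preserving the integrality condition.

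The problem is that the whole weight of the argument is then placed on a ``projection lemma'' that you state but neither prove nor cite precisely: namely, that the single edge $w\edge{\gamma}wr_\gamma$ of $\QB(W)$ descends to a directed path in $\QB(W^J)$ from $\mcr{w}$ to $\mcr{wr_\gamma}$ all of whose labels are $W_J$-conjugates of $\gamma$. This is exactly the nontrivial content of the lemma being proved, and it is not a routine ``length bookkeeping'' step, especially for quantum edges: in $\QB(W)$ a quantum edge satisfies $\ell(wr_\gamma)=\ell(w)+1-2\pair{\rho}{\gamma^\vee}$, whereas in $\QB(W^J)$ the defining condition uses $\rho-\rho_J$ in place of $\rho$. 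The discrepancy $2\pair{\rho_J}{\gamma^\vee}$ is exactly what makes the descent subtle, and your proposal never engages with it. In particular, the claim that when the descent splits into several steps every intermediate label is still a $W_J$-conjugate of $\gamma$ is an additional assertion with no justification given; if the descent is not a single edge, it is unclear a priori where the intermediate labels come from. You correctly identify this step as ``the main obstacle,'' but simply restating it as a desired lemma does not discharge it. To close the gap one would need either to invoke (and correctly quote) the precise projection result from \cite{lnsumk1}/\cite{lnsumk2} that relates edges of $\QB(W)$ to edges of $\QB(W^J)$, or to carry out the length comparison and label analysis from scratch.
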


\begin{lem}[{\cite[Lemma~6.7]{lnsumk2}}] \label{bqbpaths} 
Consider two directed paths in $\QB(W)$ between some $w$ and $v$. Assume that the first one is a shortest 
path, while the second one is in $\QB_{b\lambda}(W)$, for some rational number $b$. 
Then the first path is in $\QB_{b\lambda}(W)$ as well.
\end{lem}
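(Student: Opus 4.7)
The plan is to reduce the statement to a local stability property of edges under ``diamond moves'' in the quantum Bruhat graph, and then to exploit the shellability result Theorem~\ref{thm:shell} in order to normalize both paths against a common shortest path associated with a reflection ordering.

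First I would fix a reflection ordering on $\Phi^{+}$ and invoke Theorem~\ref{thm:shell} to get the unique shortest path $\bp_{0}$ from $w$ to $v$ whose edge labels are strictly increasing with respect to that ordering. By Proposition~\ref{prop:shortest}, $\bp_{0}$ and the given shortest path $\bp$ have the same weight. My strategy is to show two things: (a) the path $\bp_{0}$ lies in $\QB_{b\lambda}(W)$, and (b) every shortest path from $w$ to $v$ can be obtained from $\bp_{0}$ by a sequence of length-preserving ``rotation'' moves on consecutive edges, each of which preserves the property of lying in $\QB_{b\lambda}(W)$.

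For (a), I would use induction on the length $m$ of $\bq$. If $m$ is already the minimum, one combines Theorem~\ref{thm:shell} with Proposition~\ref{prop:shortest} to conclude directly. If $m > \ell(w\Rightarrow v)$, then $\bq$ must contain a non-shellable consecutive pair $u \edge{\gamma} u' \edge{\delta} u''$, i.e.\ one whose two labels violate the reflection ordering. The key input is the ``diamond/tilting lemma'' for $\QB(W)$ (originating in \cite{BFP}): such a pair can either be shortened to a single edge $u\edge{\eta}u''$, or replaced by another consecutive pair $u\edge{\gamma'}u'''\edge{\delta'}u''$ whose labels are the images of $\gamma,\delta$ under an explicit reflection in $s_{\gamma}s_{\delta}$. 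In every case, the new coroots $\eta^{\vee}$, $\gamma'^{\vee}$, $\delta'^{\vee}$ lie in $\mathbb{Z}\gamma^{\vee}\oplus\mathbb{Z}\delta^{\vee}$, so that $b\langle\lambda,\cdot\rangle$ remains integral. Iterating this operation yields a path from $w$ to $v$ of strictly smaller length (or with a strictly smaller number of descents in its label sequence) that still lies in $\QB_{b\lambda}(W)$, reducing to the induction hypothesis.

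For (b), I would apply the same local rotation moves, but now at fixed length, to connect any shortest path $\bp$ to the canonical increasing shortest path $\bp_{0}$. Since each rotation is a bijection on edge pairs whose labels' coroots span the same $\mathbb{Z}$-lattice, the $\QB_{b\lambda}(W)$ property transfers from $\bp_{0}$ to $\bp$, finishing the proof. The main obstacle is the diamond-move analysis: one must verify carefully that in every case of the quantum-Bruhat diamond lemma (pure Bruhat, pure quantum, and mixed), the new labels' coroots really do lie in the $\mathbb{Z}$-span of the old ones, so that the integrality condition $b\langle\lambda,\beta^{\vee}\rangle\in\mathbb{Z}$ is propagated; once this case-check is in place, the induction and shellability arguments are essentially formal.
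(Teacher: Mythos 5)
Your outline — normalize via shellability, then transport the $\QB_{b\lambda}(W)$ condition along rank-two exchange moves — is a reasonable attack, and your observation that any new edge label produced by such a move has coroot in $\mathbb{Z}\gamma^\vee+\mathbb{Z}\delta^\vee$ (so that $b\langle\lambda,\cdot\rangle$-integrality propagates) is the right mechanism for why this should work. But two of the steps you invoke are not available as stated. The base case of (a) does not ``conclude directly'': when $m=\ell(w\Rightarrow v)$, Proposition~\ref{prop:shortest} says only that $\wt(\bq)=\wt(\bp_0)$, and $\wt$ is a sum over the \emph{quantum} edges of a path, so it carries no information about the individual labels of $\bp_0$, Bruhat or quantum. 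Concluding $\bp_0\in\QB_{b\lambda}(W)$ from the existence of some shortest $\bq\in\QB_{b\lambda}(W)$ is precisely the content of your claim (b), which you assert but never prove.

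Claim (b) is itself a substantive word-property for the quantum Bruhat graph: that any two shortest directed paths between $w$ and $v$ are connected by a sequence of rank-two rotations each preserving membership in $\QB_{b\lambda}(W)$. The ``diamond/tilting lemma'' you attribute to \cite{BFP} is not stated there in the form you use (replacing a two-edge path $u\edge{\gamma}u'\edge{\delta}u''$ by either a single edge or another two-edge path with labels in the rank-two Weyl orbit of $\{\gamma,\delta\}$), and you never exhibit a quantity that strictly decreases under your moves: you write ``strictly smaller length (or strictly smaller number of descents),'' but a length-preserving rotation need not reduce the number of descents in the label sequence, so the induction on $m$ alone does not close, and neither does an induction on descents. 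A correct version of this argument would have to state and prove the rank-two exchange lemma for $\QB(W)$ (carefully distinguishing the Bruhat/quantum/mixed cases), identify a terminating statistic for the sorting procedure, and only then invoke the coroot-lattice observation. As it stands, the load-bearing ingredients are precisely the ones left unverified.
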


We now recall \cite[Proposition~7.2]{lnsumk1}, which constructs the analogue of (one version of) the so-called {\em Deodhar lifts} \cite{Deo} for the quantum Bruhat graph; we will call them {\em quantum right Deodhar lifts}.

\begin{prop}[\cite{lnsumk1}]\label{mind} Given $v,w\in W$, there exists a unique element $x \in vW_{J}$ such that 
$\ell(w \Rightarrow x)$ attains its minimum value as a function 
of $x\in vW_J$.
\end{prop}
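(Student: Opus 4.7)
The plan is to dispose of existence immediately and concentrate the work on uniqueness. The coset $vW_J$ is finite and $x \mapsto \ell(w \Rightarrow x)$ is an integer-valued function on it, so the minimum is attained; this gives existence. The substantive content of the proposition is therefore the uniqueness assertion, which I would prove by combining the shellability of $\QB(W)$ (Theorem~\ref{thm:shell}) with the projection-to-parabolic-quotient technique encoded in Lemma~\ref{q2ls} and Proposition~\ref{prop:shortest}.

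Concretely, I would fix a reflection ordering $\prec$ on $\Phi^+$. For each minimizer $x \in vW_J$, attaining the value $m := \min_{x' \in vW_J} \ell(w \Rightarrow x')$, let $\bp_x$ denote the unique $\prec$-increasing shortest directed path from $w$ to $x$ in $\QB(W)$ produced by Theorem~\ref{thm:shell}. Suppose for contradiction that $x_1 \neq x_2$ both achieve the minimum. Projecting $\bp_{x_1}$ and $\bp_{x_2}$ edge-by-edge to $\QB(W^J)$ via Lemma~\ref{q2ls} yields two directed paths from $\mcr{w}$ to $\mcr{v}$ of length at most $m$, and Proposition~\ref{prop:shortest} pins down the total quantum weight of any shortest directed path between two fixed vertices of $\QB(W^J)$ modulo $Q_J^\vee$. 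This common weight constraint sharply restricts the room in which the two lifts $\bp_{x_1}$ and $\bp_{x_2}$ can differ while still both having length $m$.

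To finish the uniqueness, I envision a local diamond argument applied at the first vertex $y$ at which $\bp_{x_1}$ and $\bp_{x_2}$ diverge, say along edges labeled $\beta_1 \prec \beta_2$ respectively: a quantum-Bruhat-graph diamond move on the pair $(y \edge{\beta_1} {\cdot},\, y \edge{\beta_2} {\cdot})$, of the kind developed in~\cite{BFP}, should produce a strictly shorter continuation to one of $x_1$ or $x_2$, contradicting the minimality of $m$. The main technical obstacle I anticipate is precisely this diamond analysis in the presence of quantum edges: since a single quantum edge can change $\ell(w \Rightarrow \cdot)$ by more than one, the clean rank-function bookkeeping that trivializes the classical (Bruhat-order) Deodhar-lift uniqueness is no longer available, and one has to track the coroot weights along the two candidate paths carefully enough to exclude the additional degenerate configurations that only quantum edges make possible.
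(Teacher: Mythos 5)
The heart of the proposition is the uniqueness claim, and your proposal leaves exactly that step open: you write that a ``diamond move on the pair $(y\edge{\beta_1}\cdot,\,y\edge{\beta_2}\cdot)$ \ldots{} should produce a strictly shorter continuation to one of $x_1$ or $x_2$,'' and then acknowledge this as the ``main technical obstacle.'' That is not a proof sketch but the statement of the problem. The diamond moves of \cite{BFP} are designed to compare two paths \emph{between the same pair of vertices}; here the two increasing-label paths $\bp_{x_1}$ and $\bp_{x_2}$ have \emph{different} endpoints, so a diamond move at the divergence vertex $y$ does not, on its face, transform one into the other or yield a shorter path to a fixed target. Nor does the invocation of Lemma~\ref{q2ls} (stated for edges of $\QB_{b\lambda}(W)$, not arbitrary edges, though one can take $b=0$) together with Proposition~\ref{prop:shortest} give a usable constraint: Proposition~\ref{prop:shortest} only controls weights of \emph{shortest} paths in $\QB(W^J)$ modulo $Q_J^\vee$, and whether the projection of $\bp_{x_i}$ is a shortest path in $\QB(W^J)$ is itself part of what must be proved.

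The structural ingredient you are missing is the \emph{choice} of reflection ordering. The paper (via \cite[Proposition~7.2, Lemmas~7.4, 7.5]{lnsumk1}, whose analogues appear here as Lemmas~\ref{lem1} and~\ref{lem2}) uses the specific order $<_\lambda$ from \cite[Section~4.3]{LS}, in which $\Phi^+\setminus\Phi_J^+$ is an \emph{initial section}. This makes the increasing-label shortest path from $w$ to a coset element split cleanly: first a prefix with labels in $\Phi^+\setminus\Phi_J^+$, then a suffix with labels in $\Phi_J^+$ that stays inside the target coset. From this one gets the two lemmas almost for free. First (analogue of Lemma~\ref{lem1}): if $x_0$ minimizes $\ell(w\Rightarrow\cdot)$ on $vW_J$, its increasing-label path cannot have a nonempty $\Phi_J^+$-suffix, since cutting that suffix would land at a strictly closer element of the same coset; hence all labels lie in $\Phi^+\setminus\Phi_J^+$. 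Second (analogue of Lemma~\ref{lem2}): two increasing-label paths from $w$ with all labels in $\Phi^+\setminus\Phi_J^+$, ending in the same coset, must end at the same element. Combining these gives uniqueness in one line. With a \emph{generic} reflection ordering this decomposition breaks down --- the $\Phi_J^+$ and $\Phi^+\setminus\Phi_J^+$ labels interleave --- and the clean ``drop the tail'' argument is unavailable, which is why your envisioned diamond analysis hits a wall.

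So the existence part is fine, but the uniqueness argument as written is incomplete in an essential way; the fix is not a more careful diamond analysis for a general ordering, but the switch to the $<_\lambda$-ordering with $\Phi^+\setminus\Phi_J^+$ as an initial section, as in Lemmas~\ref{lem1} and~\ref{lem2}.
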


We refer also to \cite[Theorem~7.1]{lnsumk1}, stating 
that the mentioned minimum is, in fact, attained by
the minimum of the coset $vW_J$ with respect to 
the {\em $w$-tilted Bruhat order} $\preceq_w$ on $W$ (see \cite{BFP}). 
Therefore, it makes sense to denote it by $\min(vW_J,\preceq_w)$,
 although we will not use this stronger result. 

The quantum Bruhat graph analogue of the second version of the Deodhar lifts was given in \cite[Proposition~2.25]{nospcf}; we will call these {\em quantum left Deodhar lifts}. The mentioned result is stated based on the so-called {\em dual $v$-tilted Bruhat order} $\preceq_v^*$ on $W$, introduced in \cite[Definition~2.24]{nospcf}. It is proved by reduction to~\cite[Theorem~7.1]{lnsumk1}.

\begin{prop}[\cite{nospcf}]\label{lDl} Given $v,w\in W$, the coset $wW_J$ has a unique maximal element with respect to $\preceq_v^*$, which is denoted by~$\max(wW_J,\preceq_v^*)$.\end{prop}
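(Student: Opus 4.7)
The plan is to deduce Proposition~\ref{lDl} from Proposition~\ref{mind} by means of an involution on $W$ that intertwines the dual tilted order $\preceq_v^*$ with an ordinary tilted order $\preceq_{v'}$. The natural candidate, standard in this circle of ideas, is an involution built from the longest element $w_\circ$, such as $u \mapsto w_\circ u$ or $u \mapsto u w_\circ$. Such an involution acts on the quantum Bruhat graph $\QB(W)$ by reversing every edge (and simultaneously relabelling the edge labels by $\beta \mapsto -w_\circ(\beta)$, which preserves $\Phi^{+}$), so it takes shortest paths from $a$ to $b$ bijectively to shortest paths from the image of $b$ to the image of $a$.

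First, I would recall the precise definition of $\preceq_v^*$ from \cite[Definition~2.24]{nospcf}: informally, $x \preceq_v^* y$ should mean that there is a shortest path in $\QB(W)$ from $y$ to $v$ that factors through $x$ (the ``dual'' of the condition defining $\preceq_v$). Given this, the arrow-reversing involution $\iota$ above directly translates the condition $x \preceq_v^* y$ into $\iota(y) \preceq_{\iota(v)} \iota(x)$, so that $\iota$ converts maxima of $\preceq_v^*$ into minima of $\preceq_{\iota(v)}$.

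Second, I would track how the involution acts on cosets of $W_J$. Under $u \mapsto w_\circ u$, the left coset $wW_J$ is mapped to $w_\circ w W_J$, which is again a left coset of the same parabolic $W_J$; under $u \mapsto u w_\circ$, one gets a left coset of $W_{J'}$ for $J' = -w_\circ(J)$. Either way, cosets of $W_J$ are carried to cosets of some parabolic $W_{J'}$, so the maximum problem for $wW_J$ with respect to $\preceq_v^*$ becomes the minimum problem for $\iota(w)W_{J'}$ with respect to $\preceq_{\iota(v)}$. Applying Proposition~\ref{mind} (which, by \cite[Theorem~7.1]{lnsumk1}, yields a unique coset minimum in the $\iota(v)$-tilted Bruhat order) and pulling back via $\iota^{-1}$ produces the unique coset maximum in $wW_J$ with respect to $\preceq_v^*$.

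The main obstacle will be verifying the compatibility of the involution with the definition of $\preceq_v^*$ at the level of directed paths in $\QB(W)$: one must check that the length and weight statistics of shortest paths are preserved under the arrow-reversal, and that this interchanges the role of source and target correctly so that $\preceq_v$ and $\preceq_v^*$ are genuine mirror images. Once this compatibility is in place, together with the elementary fact that $\iota$ sends left cosets of a parabolic to left cosets of a (possibly different) parabolic, the reduction is immediate; the existence and uniqueness of $\max(wW_J, \preceq_v^*)$ then follow formally from the existence and uniqueness asserted in Proposition~\ref{mind}.
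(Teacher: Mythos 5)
Your approach — reducing Proposition~\ref{lDl} to the $w$-tilted Bruhat order result of \cite[Theorem~7.1]{lnsumk1} via an arrow-reversing involution on $\QB(W)$ — is exactly the route the paper attributes to the cited reference \cite{nospcf}, so the strategy is correct. (The paper itself does not reproduce that proof; it cites \cite{nospcf} and notes it ``is proved by reduction to \cite[Theorem~7.1]{lnsumk1}.'' The paper later uses the same involution $u\mapsto w_\circ u$, cf.\ the proof of Corollary~\ref{ord-path-rev}.) Between your two candidate involutions, $u \mapsto w_\circ u$ is the right one here: it reverses all edges of $\QB(W)$ while keeping labels, and it sends the left coset $wW_J$ to the left coset $w_\circ w W_J$ of the \emph{same} parabolic, so no change of $J$ is needed.

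One correction to your informal description of $\preceq_v^*$. You wrote that $x \preceq_v^* y$ should mean there is a shortest path from $y$ to $v$ through $x$; that is the reverse of what is consistent with Proposition~\ref{maxd} (which identifies $\max(wW_J,\preceq_v^*)$ with the element of $wW_J$ minimizing $\ell(\cdot\Rightarrow v)$, i.e., the element \emph{closest} to $v$). The correct condition is that $x \preceq_v^* y$ iff there is a shortest path from $x$ to $v$ passing through $y$, i.e., $\ell(x\Rightarrow y)+\ell(y\Rightarrow v)=\ell(x\Rightarrow v)$; then larger elements are closer to $v$. With this definition and the identity $\ell(a\Rightarrow b)=\ell(w_\circ b\Rightarrow w_\circ a)$, one checks directly that $x \preceq_v^* y$ iff $w_\circ y \preceq_{w_\circ v} w_\circ x$, so the involution is genuinely order-\emph{reversing} as you claimed; with the definition you actually wrote down, the same computation would give an order-\emph{preserving} map and the reduction would break. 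Once the definition is fixed, your conclusion follows: $\max(wW_J,\preceq_v^*) = w_\circ\,\min(w_\circ w W_J,\preceq_{w_\circ v})$, and the right-hand side exists and is unique by \cite[Theorem~7.1]{lnsumk1}.
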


For our purposes, the weaker version of this result, which is stated below, suffices; this is the analogue of Proposition~\ref{mind}.

\begin{prop}\label{maxd}
Given $v,w\in W$, there exists a unique element $x \in wW_{J}$ such that 
$\ell(x \Rightarrow v)$ attains its minimum value as a function 
of $x\in wW_J$.
\end{prop}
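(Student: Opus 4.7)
The plan is to derive Proposition~\ref{maxd} as an immediate corollary of the stronger Proposition~\ref{lDl}. By Proposition~\ref{lDl} (i.e., \cite[Proposition~2.25]{nospcf}), the coset $wW_J$ has a unique maximum $x^{*} := \max(wW_J, \preceq_v^*)$ with respect to the dual $v$-tilted Bruhat order.

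Next I would invoke the defining property of $\preceq_v^*$ from \cite[Definition~2.24]{nospcf}: the relation $x \preceq_v^* y$ holds precisely when $y$ lies on some shortest directed path from $x$ to $v$ in $\QB(W)$, equivalently, when the additivity
$$\ell(x \Rightarrow v) = \ell(x \Rightarrow y) + \ell(y \Rightarrow v)$$
is satisfied. Applying this with $y = x^{*}$ for an arbitrary $x \in wW_J$ (so that $x \preceq_v^* x^{*}$ by maximality) yields
$$\ell(x \Rightarrow v) = \ell(x \Rightarrow x^{*}) + \ell(x^{*} \Rightarrow v) \ge \ell(x^{*} \Rightarrow v),$$
with equality if and only if $\ell(x \Rightarrow x^{*}) = 0$, i.e., $x = x^{*}$. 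Hence $x^{*}$ is the unique element of $wW_J$ at which $\ell(\,\cdot\, \Rightarrow v)$ attains its minimum, which is exactly the assertion of Proposition~\ref{maxd}.

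There is essentially no obstacle to overcome here: the substantive work — establishing the existence and uniqueness of the maximum in $(wW_J,\preceq_v^*)$, accomplished in \cite{nospcf} by reduction to \cite[Theorem~7.1]{lnsumk1} — is already packaged in Proposition~\ref{lDl}, and all that remains is to translate this order-theoretic fact into a statement about minimizing path length in $\QB(W)$, which is immediate from the definition of $\preceq_v^*$. One could alternatively attempt a self-contained proof that mimics, with reversed arrows, the argument for Proposition~\ref{mind} given in \cite[Proposition~7.2]{lnsumk1}; but since Proposition~\ref{lDl} is already available in the literature, invoking it is both cleaner and conceptually the point.
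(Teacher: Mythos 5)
Your argument is correct: it derives Proposition~\ref{maxd} from Proposition~\ref{lDl} by unpacking the definition of $\preceq_v^*$, observing that $x \preceq_v^* x^*$ forces $\ell(x\Rightarrow v)=\ell(x\Rightarrow x^*)+\ell(x^*\Rightarrow v)\ge\ell(x^*\Rightarrow v)$, with equality only when $x=x^*$. This is precisely the content of the paper's parenthetical remark that Proposition~\ref{maxd} is the \emph{weaker version} of Proposition~\ref{lDl}, and that the minimizer is $\max(wW_J,\preceq_v^*)$; so in that sense you are fleshing out a derivation the paper itself acknowledges.

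However, this is not the route the paper actually chooses to highlight. The paper deliberately points out that Proposition~\ref{maxd} can be proved \emph{independently} of Proposition~\ref{lDl}, as an immediate consequence of Lemmas~\ref{lem1} and~\ref{lem2} (the analogues of \cite[Lemmas~7.4,~7.5]{lnsumk1}), by mimicking the self-contained proof of Proposition~\ref{mind} in \cite[Proposition~7.2]{lnsumk1}. The reason the paper prefers this second route is twofold: Lemmas~\ref{lem1} and~\ref{lem2} are needed anyway in the proof of Lemma~\ref{L:mainb1}, so there is no extra cost; and it decouples Proposition~\ref{maxd} from the full apparatus of the dual tilted Bruhat order $\preceq_v^*$ and its uniqueness-of-maximum theorem in \cite{nospcf}. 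Your route is shorter and cleaner if one is willing to import Proposition~\ref{lDl} as a black box, but the paper's route is more self-contained within the shellability framework of Section~\ref{addshellab}. You correctly flag this alternative in your last paragraph; the paper simply makes the opposite choice of which argument to foreground.
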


The mentioned element is $\max(wW_J,\preceq_v^*)$. In~\cite{lnsumk1} we gave a proof of Proposition~\ref{mind}, i.e., \cite[Proposition~7.2]{lnsumk1}, which is independent of \cite[Theorem~7.1]{lnsumk1}, mentioned above; this proof was based on \cite[Lemmas~7.4,~7.5]{lnsumk1}. Likewise, Proposition~\ref{maxd} can be proved independently of Proposition~\ref{lDl}, as an immediate consequence of the analogues of the mentioned lemmas. These analogues are stated as Lemmas~\ref{lem1}~and~\ref{lem2} in Section~\ref{addshellab}, and are also needed in the proof of Lemma~\ref{L:mainb1} in that section. 

\section{Background on the combinatorial models}

Throughout this section, $\lambda$ is a dominant weight whose stabilizer is the parabolic subgroup $W_J$ of $W$ for a subset $J \subset I$.

\subsection{Quantum LS paths}\label{sec:qls}

\begin{dfn}[\cite{lnsumk2}]\label{defls} A {\em quantum LS path} $\eta\in\QLS(\lambda)$ is given by two sequences
\begin{equation}\label{E:lsste}
(0 = b_1<b_2<b_3<\cdots<b_t<b_{t+1}=1) \,;\;\; (\kappa(\eta)=\sigma_1,\,\sigma_{2},\,\ldots,\,\sigma_t=\iota(\eta))\,,
\end{equation}
where $b_k\in \mathbb{Q}$, $\sigma_k\in W^J$, and there is a directed path in $\QB_{b_k\lambda}(W^J)$ from $\sigma_{k-1}$ to $\sigma_k$, for each $k=2,\ldots,t$. The elements $\sigma_k$ are called the {\em directions} of $\eta$, while $\iota(\eta)$ and $\kappa(\eta)$ are the {\em initial} and {\em final directions}, respectively. 
\end{dfn}

This data encodes the sequence of vectors 
\begin{equation}\label{seqvec} u_t:=(b_{t+1}-b_t)\sigma_t\lambda\,, \;\:\ldots\,,\;\: u_2:=(b_3-b_2)\sigma_2\lambda\,,\;\: u_1:=(b_2-b_1)\sigma_1\lambda\,. 
\end{equation}
 We can view the quantum LS path $\eta$ as a piecewise-linear 
path given by the sequence of points 
\[0\,,\;\: u_t\,,\;\: u_{t-1}+u_t\,,\;\: \dotsc\,,\;\: u_1+\cdots+u_t\,.\]
There is also a standard way to express $\eta$ as a map $\eta\::\:[0,1]\rightarrow {\mathfrak h}_{\mathbb R}^*$ with $\eta(0)=0$ (where ${\mathfrak h}_{\mathbb R}^* = \mathbb{R} \otimes_{\mathbb{Z}} X$ is the real part of the dual Cartan subalgebra), but we do not need this here.
The endpoint of the path, also called its weight, is $\wt(\eta):=\eta(1)=u_1+\cdots+u_t$. 

We define the {\em (tail) degree function} (cf. \cite[Corollary~4.8]{lnsumk2}) by
\begin{equation}\label{eq:deg}
\Deg(\eta):=-\sum_{k=2}^t(1-b_k) \langle{\lambda},{\wt_J({\sigma}_{k-1} \Rightarrow {\sigma}_{k})}\rangle\,.
\end{equation}

Given $w\in W$, we define $\iota({\eta},{w}) \in W$, called the {\em initial direction of $\eta$ with respect to $w$}, 
by the following recursive formula: 
\begin{equation} \label{eq:tiw}
\begin{cases}
w_0:=w\,, & \\[2mm]
w_k:=\min(\sigma_kW_J,\preceq_{w_{k-1}}) & \text{for $k=1,\ldots,t$}\,, \\[2mm]
\iota({\eta},{w}):=w_t\,. 
\end{cases}
\end{equation}
Also, we set
\begin{equation} \label{eq:xiv}
\xi(\eta,w):= 
\sum_{k=1}^{t} \wt ({w}_{k-1} \Rightarrow {w}_{k})\,, 
\end{equation}
and
\begin{equation} \label{eq:degx}
\Deg_{w}(\eta):= - \sum_{k=1}^{t} (1-b_k) \langle{\lambda},{\wt({w}_{k-1} \Rightarrow {w}_{k})}\rangle\,.
\end{equation}

Given $v\in W$, we define $\kappa({\eta},{v}) \in W$, called the {\em final direction of $\eta$ with respect to $v$}, 
by the following recursive formula: 
\begin{equation} \label{eq:tiw1}
\begin{cases}
v_{t+1}:=v\,, & \\[2mm]
v_k:=\max(\sigma_kW_J,\preceq_{v_{k+1}}^*) & \text{for $k=1,\ldots,t$}\,, \\[2mm]
\kappa({\eta},{v}):=v_1\,. 
\end{cases}
\end{equation}
Also, we set
\begin{equation} \label{eq:xiv1}
\zeta(\eta,v):= 
\sum_{k=1}^{t} \wt ({v}_{k} \Rightarrow {v}_{k+1})\,.
\end{equation}

\subsection{The quantum alcove model}\label{sec:qam}

We say that two alcoves are {adjacent} if they are distinct and have a common wall. Given a pair
of adjacent alcoves $A$ and $B$, we write $A \xrightarrow{\beta}  B$  for $\beta\in\Phi$ if the
common wall is orthogonal to $\beta$ and $\beta$ points in the direction from $A$ to $B$. Recall that
alcoves are separated by hyperplanes of the form
\[
	H_{\beta,l}=\{\mu\in \Fh^{\ast}_{\mathbb{R}} \mid \langle \mu,\beta^\vee \rangle=l\}\,.
\]
We denote by $s_{\beta,l}$ the affine reflection in this hyperplane.

The fundamental alcove is defined as
\[
	A_{\circ} = \{ \mu \in \Fh_{\mathbb{R}}^{\ast} \mid 0< \langle \mu,\alpha^\vee \rangle < 1 \quad \text{for all $\alpha\in \Phi^+$}\}\;.
\]

\begin{dfn}[\cite{LP}]
	An  \emph{alcove path} is a sequence of alcoves $(A_0, A_1, \ldots, A_m)$ such that
	$A_{j-1}$ and $A_j$ are adjacent, for $j=1,\ldots, m.$ We say that $(A_0, A_1, \ldots, A_m)$  
	is \emph{reduced} if it has minimal length among all alcove paths from $A_0$ to $A_m$.
\end{dfn}

Let $\lambda$ be any weight, and
$A_{\lambda}=A_{\circ}+\lambda$ the translation of the fundamental alcove $A_{\circ}$ by the weight $\lambda$.
	
\begin{dfn}[\cite{LP}]\label{deflch}
	The sequence of roots $\Gamma(\lambda)=(\beta_1, \beta_2, \dots, \beta_m)$ is called a
	\emph{$\lambda$-chain (of roots)}, respectively \emph{reduced $\lambda$-chain},  if 
	\[	
		A_0=A_{\circ} \xrightarrow{-\beta_1}  A_1
		\xrightarrow{-\beta_2} \cdots 
		\xrightarrow{-\beta_m}  A_m=A_{-\lambda}
	\]
is an alcove path, respectively reduced alcove path.
\end{dfn}

A reduced alcove path $(A_0=A_{\circ},A_1,\ldots,A_m=A_{-\lambda})$ defines a total order on the hyperplanes, to be called 
{\em $\lambda$-hyperplanes}, which separate $A_\circ$ from $A_{-\lambda}$. This total order is given by the sequence 
$H_{\beta_i,-l_i}$ for $i=1,\ldots,m$, where $H_{\beta_i,-l_i}$ contains the common wall of 
$A_{i-1}$ and $A_i$. Note that $\langle\lambda,\beta_i^\vee\rangle\ge0$, and that the integers $l_i$, called {\em heights}, have the following ranges:
\begin{equation}\label{ranges}0\le l_i\le\langle\lambda,\beta_i^\vee\rangle-1\;\;\mbox{if}\;\;\beta_i\in\Phi^+\,, \;\;\;\;\mbox{and}\;\;\;\;1\le l_i\le\langle\lambda,\beta_i^\vee\rangle\;\;\mbox{if}\;\;\beta_i\in\Phi^-\,.\end{equation}
 Note also that a reduced $\lambda$-chain $(\beta_1, \ldots, \beta_m)$ determines 
the corresponding reduced alcove path, and hence we can identify them as well. 

\begin{rema} {\rm 
An alcove path corresponds to the choice of a word for an element of the affine Weyl group $\Waf^{\prime} \cong W \ltimes Q$ (corresponding to the Langlands dual $\mathfrak{g}^{\vee}$ of $\mathfrak{g}$)} sending $A_\circ$ to $A_{-\lambda}$ \cite[Lemma 5.3]{LP}. For $\lambda$ dominant, another equivalent definition of a reduced alcove path/$\lambda$-chain, based on a root interlacing condition which generalizes a similar condition characterizing reflection orders, can be found in~\cite[Definition~4.1, Proposition~10.2]{LP1}.
\end{rema}

When $\lambda$ is dominant, we have a special choice of a reduced $\lambda$-chain in~\cite[Section 4]{LP1}, which we now recall. 

\begin{prop}[\cite{LP1}]
\label{speciallch} 
Given a total order $I=\{1<2<\dotsm<r\}$ on the set of Dynkin nodes, one may express 
a coroot $\beta^\vee=\sum_{i=1}^r c_i \alpha_i^\vee$
in the ${\mathbb Z}$-basis of simple coroots. Consider the
total order on the set of $\lambda$-hyperplanes defined by 
the lexicographic order on their images in ${\mathbb Q}^{r+1}$ under the map
\begin{equation}
\label{E:stdvec}
	H_{\beta,-l}\mapsto \frac{1}{\langle\lambda, \beta^\vee \rangle} (l,c_1,\ldots,c_r).
\end{equation}
This map is injective, thereby endowing 
the set of $\lambda$-hyperplanes with a total order, which is a reduced $\lambda$-chain. We call it the
{\em lexicographic (lex) $\lambda$-chain}, and denote it by $\Gamma_{\rm lex}(\lambda)$. 
\end{prop}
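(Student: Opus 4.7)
My proof proposal has two parts: injectivity of the map in \eqref{E:stdvec}, and showing that the resulting total order coincides with a reduced $\lambda$-chain.

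\emph{Injectivity.} Since $\lambda$ is dominant, every $\lambda$-hyperplane has the form $H_{\beta,-l}$ with $\beta \in \Phi^+$, because the range in \eqref{ranges} for $\beta \in \Phi^-$ is empty when $\lambda$ is dominant. Suppose two such hyperplanes $H_{\beta,-l}$ and $H_{\beta',-l'}$ map to the same vector. Equality of the last $r$ coordinates gives $\beta^\vee/\langle\lambda,\beta^\vee\rangle = \beta'^\vee/\langle\lambda,\beta'^\vee\rangle$ in the basis of simple coroots; since $\langle\lambda,\beta^\vee\rangle,\langle\lambda,\beta'^\vee\rangle > 0$, the coroots $\beta^\vee$ and $\beta'^\vee$ lie on the same positive ray. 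As the root system of $\mathfrak{g}$ is reduced, this forces $\beta = \beta'$, and then the first coordinate gives $l = l'$.

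\emph{Reduced chain via perturbation.} The first coordinate $l/\langle\lambda,\beta^\vee\rangle$ is precisely the time at which the segment $\{-t\lambda : 0 \le t \le 1\}$ crosses $H_{\beta,-l}$. To break ties, I would perturb this segment: set $\varepsilon := \sum_{i=1}^r \epsilon_i \varpi_i$ with $0 < \epsilon_r \ll \cdots \ll \epsilon_1 \ll 1$, and consider $\gamma(t) := \varepsilon - t\lambda$ for $t \in [0,1]$. Using $\langle \varpi_j,\beta^\vee\rangle = c_j$ when $\beta^\vee = \sum_i c_i\alpha_i^\vee$, a direct computation yields that $\gamma$ meets $H_{\beta,-l}$ at time $(l + \sum_i \epsilon_i c_i)/\langle\lambda,\beta^\vee\rangle$. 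Comparing two such crossing times and expanding in the hierarchical $\epsilon$-scale recovers exactly the lex order on the vector $(l, c_1, \ldots, c_r)/\langle\lambda,\beta^\vee\rangle$.

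For $\epsilon_i$ sufficiently small, the endpoints $\gamma(0)=\varepsilon$ and $\gamma(1)=\varepsilon-\lambda$ lie in the interiors of $A_\circ$ and $A_{-\lambda}$, respectively. A straight line between such points crosses only hyperplanes separating the two alcoves (a non-separating hyperplane would require an even number of crossings, impossible for a straight line), hence only $\lambda$-hyperplanes, each exactly once and transversally (since $\langle\lambda,\beta^\vee\rangle > 0$). Thus $\gamma$ traces an alcove path from $A_\circ$ to $A_{-\lambda}$ of minimal length, i.e., a reduced alcove path, whose hyperplane sequence is exactly the lex order on $\lambda$-hyperplanes. The main obstacle is verifying that the hierarchical perturbation truly separates all ties among primary coordinates; this is equivalent to the injectivity established in the first step, after which only straight-line transversality on the affine hyperplane arrangement remains.
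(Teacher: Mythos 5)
Your proof is correct. Note, however, that the paper does not supply its own proof here: Proposition~\ref{speciallch} is recalled from \cite[Section~4]{LP1} with a citation only, so there is no internal argument to compare against. Your argument is a clean geometric route: since $\lambda$ is dominant, every $\lambda$-hyperplane has $\beta\in\Phi^+$ (the negative-root range in \eqref{ranges} is empty), which gives injectivity immediately; then the segment $\gamma(t)=\varepsilon-t\lambda$ with hierarchical $\varepsilon=\sum_i\epsilon_i\varpi_i$ crosses $H_{\beta,-l}$ at time $(l+\sum_i\epsilon_i c_i)/\langle\lambda,\beta^\vee\rangle$, and for $\epsilon_r\ll\cdots\ll\epsilon_1\ll 1$ the order of these crossing times is exactly the lex order on $\frac{1}{\langle\lambda,\beta^\vee\rangle}(l,c_1,\ldots,c_r)$; finally, a segment between interior points of $A_\circ$ and $A_{-\lambda}$ meets precisely the separating hyperplanes, each transversally once, producing a reduced alcove path. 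By contrast, \cite{LP1} has available a purely combinatorial characterization of reduced $\lambda$-chains via a root-interlacing condition (referenced in the remark following Definition~\ref{deflch}), which can be used to verify the claim algebraically; your perturbation approach sidesteps that machinery at the cost of the genericity check, which your injectivity step handles. One small stylistic point: for a straight segment the ``even number of crossings'' of a non-separating hyperplane is simply zero, which follows directly from convexity (both endpoints lie strictly on the same side), so the parity phrasing is true but slightly indirect.
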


The rational number $l/\langle\lambda, \beta^\vee \rangle$ is called the {\em relative height} of the $\lambda$-hyperplane $H_{\beta,-l}$. By definition, the sequence of relative heights in the lex $\lambda$-chain is weakly increasing. 

The objects of the quantum alcove model are defined next. This model was introduced in \cite{lalgam} and then used in \cite{lnsumk2,lnsumk3} in connection with Kirillov-Reshetikhin crystals and Macdonald polynomials specialized at $t=0$. Here we consider a generalization of it, by letting $\lambda$ be any weight, as opposed to only a dominant weight, as originally considered; another aspect of the generalization is making the model depend on a fixed element  $w\in W$, such that the initial model corresponds to $w$ being the identity element $e$. In addition to $w$, we fix an arbitrary $\lambda$-chain $\Gamma(\lambda)=(\beta_1,\,\ldots,\,\beta_m)$, and set $r_i:=s_{\beta_i}$, $\widehat{r}_i:=s_{\beta_i,-l_i}$. 

\begin{dfn}[\cite{lalgam}]
\label{def:admissible}
	A subset 
	$A=\left\{ j_1 < j_2 < \cdots < j_s \right\}$ of $[m]:=\{1,\ldots,m\}$ (possibly empty)
 	is a $w$-\emph{admissible subset} if
	we have the following directed path in the quantum Bruhat graph $\QB(W)${\rm :}
	\begin{equation}
	\label{eqn:admissible}
	 \Pi(w,A):\;\;\;\;w\xrightarrow{|\beta_{j_1}|} w r_{{j_1}} 
	\xrightarrow{|\beta_{j_2}|}  wr_{{j_1}}r_{{j_2}} 
	\xrightarrow{|\beta_{j_3}|}  \cdots 
	\xrightarrow{|\beta_{j_s}|}  wr_{{j_1}}r_{{j_2}} \cdots r_{{j_s}}=:\wend(w,A)\,.
	\end{equation}
 	We let ${\mathcal A}(w,\Gamma(\lambda))$ be the collection of all $w$-admissible subsets of $[m]$.
\end{dfn}

We now associate several parameters with the pair $(w,A)$. 
The weight of $(w,A)$ is defined by 
	\begin{equation}
	\label{defwta}
	\wt(w,A):=-w\rh_{{j_1}} \cdots 
	         \rh_{{j_s}}(-\lambda)\,.
	\end{equation}

Given the height sequence $(l_1,\ldots,l_m)$ mentioned above, we define 
the complementary height sequence $(\widetilde{l}_1,\ldots,\widetilde{l}_m)$ 
by $\widetilde{l}_i:=\langle\lambda,\beta_i^\vee\rangle-l_i$. 
Given $A=\{j_1<\cdots<j_s\}\in{\mathcal A}(w,\Gamma(\lambda))$, we set 
\begin{equation*}
A^{-}:=\bigl\{j_i \in A \mid 
  wr_{{j_1}} \cdots r_{{j_{i-1}}} > 
 w r_{{j_1}} \cdots r_{{j_{i-1}}}r_{{j_{i}}}
\bigr\}\,;
\end{equation*}
in other words, we record the quantum steps 
in the path $\Pi(w,A)$ given by \eqref{eqn:admissible}. 
We also define
\begin{equation}
\label{defheight}
\dn(w,A):=\sum_{j\in A^-}|\beta_j|^\vee\in Q^{\vee,+}\,,\;\;\;\;\hgt(w,A):=\sum_{j\in A^-}\sgn(\beta_j)\widetilde{l}_j\,.
\end{equation}

For examples, we refer to \cite{lenfmp,lnsumk2}.

\subsection{Galleries}\label{sec:gal} In this section, we recall from \cite[Appendix]{LP} the reformulation of the alcove model in terms of so-called {\em galleries}, which are similar, but not equivalent, to the LS-galleries of Gaussent-Littelmann~\cite{GaLi}. We also extend this concept to the quantum alcove model, as described in Section~\ref{sec:qam}.

\begin{dfn}[\cite{LP}] A {\em gallery} is a sequence 
$\gamma=(F_0,\,A_0,\,F_1,\,A_1,\, F_2,\, \ldots ,\, F_m,\, A_m,\, F_{m+1})$ 
such that $A_0,\ldots,A_m$ are alcoves{\rm ;}
$F_i$ is a codimension $1$ common face of the alcoves $A_{i-1}$ and $A_i$,
for $i=1,\ldots,m${\rm ;} $F_0$ is a vertex of the first alcove $A_0${\rm ;} and 
$F_{m+1}$ is a vertex of the last alcove $A_m$. 
If $F_{m+1}=\{\mu\}$, then the weight $\mu$ is called the {\em weight} of the gallery, and is denoted by ${\rm wt}(\gamma)$.
We say that a gallery is {\em unfolded} if $A_{i-1}\ne A_i$, 
for $i=1,\ldots,m$.  
\label{def:gallery}
\end{dfn}

A $\lambda$-chain $\Gamma(\lambda)$ corresponds to an alcove path from $A_{\circ}$ to $A_{-\lambda}$ (cf. Definition~\ref{deflch}), and thus determines an unfolded gallery 
$$\gamma(\lambda)=(F_0=\{0\},\,A_0=A_\circ,\,F_1,\,A_1,\, F_2,\, \ldots ,\, F_m,\, A_m=A_{-\lambda},\, F_{m+1}=\{-\lambda\})\,;$$
 see \cite[Lemma~18.3]{LP}. We fix such structures. 

We can define several operations on galleries $\gamma=(F_0,\,A_0,\,F_1,\,A_1,\, F_2,\, \ldots ,\, F_m,\, A_m,\, F_{m+1})$. First, we consider the translation $\gamma+\mu$ for a weight $\mu$, and the image $w(\gamma)$ under a Weyl group element $w \in W$. Then, as in \cite[Section~18.1]{LP}, we define the {\it tail-flip operators} $f_i$, for $i=1,\ldots,m$. To this end, let $\widehat{r}_i$ be the affine reflection with respect to the affine hyperplane containing the face $F_i$. The operator $f_i$ sends the gallery 
$\gamma$ to the gallery 
$$f_i(\gamma):=(F_0,\,A_0,\, F_1,\, A_1,\, \ldots,\, A_{i-1},\,
 F_i'=F_i,\,  A_{i}',\, F_{i+1}',\, A_{i+1}',\, \ldots,\,  A_m',\, F_{m+1}')\,,
$$
where $A_j' := \widehat{r}_i(A_j)$ and $F_j':=\widehat{r}_i(F_j)$, for 
$j=i,\dots,m+1$.
In other words, $f_i$ leaves the initial segment of the gallery from $A_0$ to $A_{i-1}$ intact, 
and reflects the remaining tail by $\widehat{r}_i$. Clearly, the operators $f_i$ commute.

Given a subset $A=\left\{ j_1 < j_2 < \cdots < j_s \right\}$ of $[m]$, we associate with it the gallery $\gamma(w,A):=wf_{j_1}\cdots f_{j_s}(\gamma(\lambda))$. For obvious reasons, we call the elements of $A$ {\em folding positions}.

\begin{prop}\label{conc-gal} {\rm (1)} We have  
\[{\rm wt}(w,A)=-{\rm wt}(\gamma(w,A))\,.\]

{\rm (2)} The first alcove of $\gamma(w,A)$ is $w(A_\circ)$, and the last alcove is $v(A_\circ)+{\rm wt}(\gamma(w,A))$, where $v:={\rm end}(w,A)$. 
\end{prop}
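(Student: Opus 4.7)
The plan is to evaluate both quantities by tracking what happens to the gallery $\gamma(\lambda)$ as we successively apply the tail-flip operators $f_{j_1},\ldots,f_{j_s}$ and then the Weyl group element $w$. The key fact used throughout is that, when applied to $\gamma(\lambda)$, the operator $f_i$ reflects the tail (from position $i$ onwards) by the affine reflection $\widehat{r}_i=s_{\beta_i,-l_i}$ defined in Section~\ref{sec:qam}, and that these operators commute, so they can be composed in any convenient order.

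For part (1), I would apply the operators in right-to-left order, starting from $f_{j_s}$. Each $f_{j_k}$ reflects the final face $F_{m+1}$ by the affine reflection associated with the current face at position $j_k$. Since $j_{k-1}<j_k$, the face at position $j_{k-1}$ remains the original $F_{j_{k-1}}$ of $\gamma(\lambda)$ even after the tails at positions $j_s,\ldots,j_{k}$ have been flipped, so the reflection used is always $\widehat{r}_{j_{k-1}}$. An immediate induction then yields that the endpoint of $f_{j_1}\cdots f_{j_s}(\gamma(\lambda))$ is $\widehat{r}_{j_1}\cdots\widehat{r}_{j_s}(-\lambda)$, and applying $w$ (which acts linearly on $\mathfrak{h}^{*}_{\mathbb{R}}$) produces
\[ \wt(\gamma(w,A))=w\,\widehat{r}_{j_1}\cdots\widehat{r}_{j_s}(-\lambda)\,, \]
which by the defining formula \eqref{defwta} equals $-\wt(w,A)$, establishing (1).

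For part (2), the first alcove is immediate: each $f_{j_k}$ leaves the alcoves $A_0,\ldots,A_{j_k-1}$ unchanged; since $j_k\geq 1$, the first alcove $A_0=A_\circ$ is preserved under all the folds, and the action of $w$ then gives $w(A_\circ)$. For the last alcove, the same induction used in (1) shows that, before applying $w$, the last alcove of $f_{j_1}\cdots f_{j_s}(\gamma(\lambda))$ is $\widehat{r}_{j_1}\cdots\widehat{r}_{j_s}(A_\circ-\lambda)$. Decomposing the composite affine transformation $\widehat{r}_{j_1}\cdots\widehat{r}_{j_s}$ as $x\mapsto r_{j_1}\cdots r_{j_s}(x)+\tau$, with linear part $r_{j_1}\cdots r_{j_s}$ and translation part $\tau$, one computes
\[ \widehat{r}_{j_1}\cdots\widehat{r}_{j_s}(A_\circ-\lambda)=r_{j_1}\cdots r_{j_s}(A_\circ)+\widehat{r}_{j_1}\cdots\widehat{r}_{j_s}(-\lambda)\,, \]
since the translated linear part absorbs $-\lambda$ into $-r_{j_1}\cdots r_{j_s}(\lambda)+\tau$. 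Applying $w$ and using $\ed(w,A)=wr_{j_1}\cdots r_{j_s}=v$, together with the weight formula from (1), the last alcove becomes $v(A_\circ)+\wt(\gamma(w,A))$.

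The main obstacle, though essentially bookkeeping, is that the definition of $\widehat{r}_i$ inside $f_i$ refers to the face $F_i$ of the gallery currently being operated on, which in general differs from the corresponding face in the reference $\gamma(\lambda)$. Composing in right-to-left order sidesteps this issue because no previously applied fold can alter a leftward face at position $j_{k-1}<j_k$; equivalently, the conjugations that would otherwise arise telescope away. Once this point is settled, everything reduces to the straightforward decomposition of the composite affine reflection into its linear and translation parts carried out above.
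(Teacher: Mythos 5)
Your proof is correct and follows essentially the same strategy as the paper: both parts are handled by composing the tail-flips in decreasing order of fold position (so that the reflecting hyperplane at each step is always a face of the \emph{original} gallery $\gamma(\lambda)$), and part (2) then reduces to the elementary splitting of the composite affine map $\widehat{r}_{j_1}\cdots\widehat{r}_{j_s}$ into its linear part $r_{j_1}\cdots r_{j_s}$ and the translation $\widehat{r}_{j_1}\cdots\widehat{r}_{j_s}(-\lambda)$. The paper states part (1) by reference to \cite[Lemma~18.4]{LP} and only writes out the base case $|A|=1$ of the induction for part (2); you supply the same argument in slightly more explicit, uniform form.
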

\begin{proof}
Part (1) is a slight extension of \cite[Lemma~18.4]{LP}, whose proof is completely similar. The first part of (2) is straightforward. For the second part of (2), assuming first that $w$ is the identity element $e$, we proceed by induction on the cardinality of $A$. The base case $A=\emptyset$ is obvious. Using the above notation, let $A=\{j=j_s\}$, and $\widehat{r}_j=r_j+\mu$, where $r_j$ is the corresponding non-affine reflection. Then the last alcove in $\gamma(e,A)$ is 
\[\widehat{r}_j(A_{-\lambda})=r_j(A_{-\lambda})+\mu=r_j(A_\circ)+r_j(-\lambda)+\mu=r_j(A_\circ)+\widehat{r}_j(-\lambda)=r_j(A_\circ)+{\rm wt}(\gamma(e,A))\,,\]
which verifies the statement. We continue in this way, by adding $j_{s-1}>\cdots>j_1$ to $A$, in this order, and by applying $w$ at the end. 
\end{proof}

\begin{dfn}\label{def-conc} Consider two galleries 
$$\gamma=(F_0,\,A_0,\,F_1,\,\ldots,\,A_m,\,F_{m+1})\,,\;\;\;\;\;\gamma'=(F_0',\,A_0',\,F_1',\,\ldots,\,A_m',\,F_{m+1}')\,,$$ 
such that $F_{m+1}=F_0'$ and $A_m=A_0'$. Under these conditions, their concatenation $\gamma\ast\gamma'$ is defined in the obvious way{\rm :}
\[\gamma\ast\gamma':=(F_0,\,A_0,\,F_1,\,\ldots,\,A_m=A_0',\,F_1',\,\ldots,\,A_m',\,F_{m+1}')\,.\]
\end{dfn}

\subsection{Additional shellability results}\label{addshellab}

In \cite[Section 4.3]{LS}, we constructed a reflection order $<_\lambda$ on $\Phi^+$ which depends on $\lambda$. 
The bottom of the order $<_\lambda$ consists of the roots in $\Phi^+\setminus\Phi_J^+$. For two such roots 
$\alpha$ and $\beta$, define $\alpha<\beta$ whenever 
the hyperplane $H_{(\alpha,0)}$ precedes $H_{(\beta,0)}$ in the lex $\lambda$-chain (see Proposition~\ref{speciallch}).
This forms an \emph{initial section} (see \cite{Dyer}) of $<_\lambda$.
The top of the order $<_\lambda$ consists of the positive roots in $\Phi_J^+$,
and we fix any reflection order for them. We refer to the reflection order $<_\lambda$ throughout.

\begin{rema}\label{remlex}{\rm 
It is not hard to see that, in the lex $\lambda$-chain, the order on the $\lambda$-hyperplanes $H_{\beta,-l}$ with the 
same relative height (not necessarily equal to 0) is given by the order $<_{\lambda}$ on the corresponding roots $\beta$. We will use this fact implicitly below. }
\end{rema}

We recall \cite[Lemma~6.6]{lnsumk2}, which characterizes the quantum right Deodhar lifts in shellability terms.

\begin{lem}[\cite{lnsumk2}] \label{L:mainb} 
Consider $\sigma,\tau\in W^J$ and $w_J\in W_J$. 
Write $\min(\tau W_J,\preceq_{\sigma w_J}) \in \tau W_J$ as $\tau w_J'$, with $w_J' \in W_{J}$.
\begin{enumerate}
\item There is a unique directed path in $\QB(W)$ from $\sigma w_J$ to some $x\in\tau W_J$ whose edge labels 
are increasing with respect to $<_{\lambda}$ and lie in $\Phi^+\setminus\Phi_J^+$. This path ends at $\tau w_J'$.
\item Assume that there is a directed path from $\sigma$ to $\tau$ in $\QB_{b\lambda}(W^J)$ for some $b\in{\mathbb Q}$. 
Then the path in~{\rm (1)} from $\sigma w_J$ to $\tau w_J'$ is in $\QB_{b\lambda}(W)$.
\end{enumerate}
\end{lem}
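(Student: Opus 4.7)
The plan is to exploit the shellability of $\QB(W)$ and of its parabolic version $\QB(W^J)$ with respect to the reflection ordering $<_\lambda$ (Theorem~\ref{thm:shell}), combined with the characterization of the quantum right Deodhar lift $\tau w_J'$ via Proposition~\ref{mind}.

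For existence in part~(1), I first invoke Remark~\ref{rem:PQBG} to obtain a directed path in $\QB(W^J)$ from $\sigma$ to $\tau$, and apply the parabolic analogue of Theorem~\ref{thm:shell} to the ordering $<_\lambda$ restricted to $\Phi^+\setminus\Phi_J^+$. This furnishes a path $\bp$ with strictly increasing labels $\beta_1<\cdots<\beta_k$ of length $k=\ell^J(\sigma\Rightarrow\tau)$. I then lift $\bp$ to $\QB(W)$ inductively, setting $u_0:=\sigma w_J$ and $u_i:=u_{i-1}s_{\beta_i}$, and verifying that each $u_{i-1}\edge{\beta_i}u_i$ is an edge in $\QB(W)$ using the standard correspondence between parabolic and non-parabolic edges for labels in $\Phi^+\setminus\Phi_J^+$.

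For uniqueness in part~(1), any candidate path $\bp'$ from $\sigma w_J$ to some $x\in\tau W_J$ with strictly increasing labels in $\Phi^+\setminus\Phi_J^+$ projects via $\mcr{\cdot}$ to a strictly increasing path from $\sigma$ to $\tau$ in $\QB(W^J)$ with the same labels (no contraction since $\beta_i\notin\Phi_J^+$). By shellability of $\QB(W^J)$ the label sequence is unique, and then the full lifted path is determined by its starting vertex and label sequence. To identify the endpoint as $\tau w_J'$, I observe that any path in $\QB(W)$ from $\sigma w_J$ to an element of $\tau W_J$ projects to a path in $\QB(W^J)$ of length at most its own, hence of length at least $k=\ell^J(\sigma\Rightarrow\tau)$. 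Therefore $\ell(\sigma w_J\Rightarrow u_k)=k=\min_{x\in\tau W_J}\ell(\sigma w_J\Rightarrow x)$, and Proposition~\ref{mind} forces $u_k=\tau w_J'$. For part~(2), by the parabolic analogue of Lemma~\ref{bqbpaths}, the chosen shortest path $\bp$ lies in $\QB_{b\lambda}(W^J)$, so each label satisfies $b\pair{\lambda}{\beta_i^\vee}\in\mathbb{Z}$; since this condition depends only on the labels, the lifted path lies in $\QB_{b\lambda}(W)$.

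The main obstacle is verifying that each lifted step $u_{i-1}\edge{\beta_i}u_i$ is a genuine edge in $\QB(W)$, and in particular that quantum parabolic edges lift to quantum non-parabolic edges. The length-balance identity $\ell(u_i)=\ell(u_{i-1})+1-2\pair{\rho}{\beta_i^\vee}$ at the non-parabolic level has to be derived from the parabolic identity $\ell(\sigma_i)=\ell(\sigma_{i-1})+1-2\pair{\rho-\rho_J}{\beta_i^\vee}$ together with careful length bookkeeping for the coset representatives $u_i$, which are generally not minimal. This is essentially the content of the analogue of \cite[Lemma~7.5]{lnsumk1} alluded to in the remarks preceding the lemma.
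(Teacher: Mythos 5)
Your approach is fundamentally different from the paper's, and it contains a genuine gap that makes the main step fail as written.

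The paper's proof of Lemma~\ref{L:mainb} (and of its mirror, Lemma~\ref{L:mainb1}) works \emph{entirely} inside $\QB(W)$: one first applies Proposition~\ref{mind} to locate the unique minimizer $\tau w_J'$ of $\ell(\sigma w_J\Rightarrow x)$ over $x\in\tau W_J$, then applies Theorem~\ref{thm:shell} to $\QB(W)$ to produce the unique $<_\lambda$-increasing path from $\sigma w_J$ to $\tau w_J'$, and finally invokes the right-lift analogues of Lemmas~\ref{lem1} and~\ref{lem2} (i.e.\ \cite[Lemmas~7.4, 7.5]{lnsumk1}) to show that this path has all labels in $\Phi^+\setminus\Phi_J^+$ and that no other $x\in\tau W_J$ admits such a path. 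There is no lifting from $\QB(W^J)$ to $\QB(W)$ at any point in part~(1).

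Your proposal instead starts in $\QB(W^J)$ and tries to lift an increasing path $\sigma=\sigma_0\edge{\beta_1}\sigma_1\edge{\beta_2}\cdots\edge{\beta_k}\sigma_k=\tau$ to $\QB(W)$ by setting $u_0:=\sigma w_J$ and $u_i:=u_{i-1}s_{\beta_i}$. This is where the argument breaks: with this rule the elements $u_i$ do \emph{not} in general lie in the cosets $\sigma_iW_J$. Writing $u_{i-1}=\sigma_{i-1}v_{i-1}$ with $v_{i-1}\in W_J$, one has $u_{i-1}s_{\beta_i}=\sigma_{i-1}s_{v_{i-1}\beta_i}\,v_{i-1}$, so the correct lift of the $i$-th step uses the reflection $s_{|v_{i-1}^{-1}\beta_i|}$, i.e.\ a label $|v_{i-1}^{-1}\beta_i|$ rather than $\beta_i$. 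Since the correction $v_{i-1}$ changes from step to step, the lifted labels need not agree with the $\beta_i$, and in particular the $<_\lambda$-increasing property of the parabolic labels does \emph{not} transfer. A concrete instance in $S_3$ with $J=\{1\}$, $\sigma=e$, $\tau=s_2$, $w_J=s_1$: the parabolic edge $e\edge{\alpha_2}s_2$ lifts to the $\QB(W)$-edge $s_1\edge{\theta}s_2s_1$ with label $\theta=\alpha_1+\alpha_2$, not $\alpha_2$; your rule $u_1=s_1s_{\alpha_2}=s_1s_2$ lands outside $\tau W_J$ altogether. The same defect afflicts your uniqueness argument, since projecting a $\QB(W)$-path to $\QB(W^J)$ likewise conjugates the labels by the intermediate $W_J$-components, so ``with the same labels'' is false. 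Finally, you appeal to a ``parabolic analogue of Theorem~\ref{thm:shell}'' and a ``parabolic analogue of Lemma~\ref{bqbpaths}'', neither of which appears in the paper (the paper only needs the non-parabolic versions), and you attribute the lifting step to the analogue of \cite[Lemma~7.5]{lnsumk1}, whereas that lemma is a uniqueness statement within a coset, not a lifting result. The correct strategy, as in the paper, is to never leave $\QB(W)$ for part~(1) and only to bring in a lifted path for part~(2), where the condition $b\pair{\lambda}{\beta^\vee}\in\mathbb{Z}$ \emph{does} transfer under $W_J$-conjugation because $W_J$ stabilizes~$\lambda$.
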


In order to state the analogue of Lemma~\ref{L:mainb} for the quantum left Deodhar lifts, namely Lemma~\ref{L:mainb1}, we need the reverse of the reflection order $<_\lambda$, which is denoted $<_\lambda^*$ (this has all the roots in $\Phi_J^+$ at the beginning). It is well-known that $<_\lambda^*$ is a reflection order as well. We also need the following two lemmas, which are proved in the same way as their counterparts in \cite{lnsumk1}, namely Lemmas~7.4~and~7.5 in this paper. 

\begin{lem}\label{lem1}
Assume that $\ell(x\Rightarrow v)$, as a function of $x\in w W_J$, has a minimum at $x=x_0$. Then the path from $x_0$ to $v$ 
with increasing edge labels with respect to $<_\lambda^*$ (cf. Theorem~{\rm \ref{thm:shell}~(1)}) has all its labels in $\Phi^+\setminus\Phi_J^+$.
\end{lem}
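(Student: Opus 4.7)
The plan is to argue by contradiction, exploiting the shellability result (Theorem~\ref{thm:shell}) together with the fact that $<_\lambda^*$ has all of $\Phi_J^+$ at the beginning. The proof is the mirror image of \cite[Lemma~7.4]{lnsumk1}, with the roles of "initial" and "final" vertex swapped, and with $<_\lambda^*$ used in place of $<_\lambda$.

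First I would let $\mathbf{p}$ denote the unique directed path in $\QB(W)$ from $x_0$ to $v$ whose sequence of edge labels is strictly increasing with respect to $<_\lambda^*$, guaranteed by Theorem~\ref{thm:shell}~(1). By Theorem~\ref{thm:shell}~(2), its length equals $\ell(x_0 \Rightarrow v)$, which by hypothesis is the minimum of $\ell(x\Rightarrow v)$ over $x\in wW_J$.

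Suppose, for contradiction, that some edge label of $\mathbf{p}$ lies in $\Phi_J^+$. Because the roots in $\Phi_J^+$ form an initial section of $<_\lambda^*$ (they are the smallest elements), any label from $\Phi_J^+$ must appear before any label from $\Phi^+\setminus\Phi_J^+$; in particular, the very first edge of $\mathbf{p}$ must be of the form $x_0 \edge{\beta} x_0 s_\beta$ with $\beta\in\Phi_J^+$. Set $x_1 := x_0 s_\beta$. Since $\beta\in\Phi_J^+$ we have $s_\beta\in W_J$, hence $x_1\in x_0 W_J = w W_J$.

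The remaining portion of $\mathbf{p}$ is a directed path in $\QB(W)$ from $x_1$ to $v$ of length $\ell(x_0\Rightarrow v)-1$, so $\ell(x_1\Rightarrow v)\le\ell(x_0\Rightarrow v)-1<\ell(x_0\Rightarrow v)$. This contradicts the minimality assumption at $x=x_0$, and therefore no label of $\mathbf{p}$ can belong to $\Phi_J^+$. The only step that requires a bit of care is the observation that $\Phi_J^+$ is an initial (not merely contiguous) section of $<_\lambda^*$, but this is precisely the definition of $<_\lambda^*$ as the reverse of $<_\lambda$, combined with the fact that the top of $<_\lambda$ is $\Phi_J^+$; no subtlety beyond this is expected, so the argument should go through routinely.
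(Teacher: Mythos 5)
Your proof is correct and follows essentially the same approach that the paper intends, namely the mirror image of \cite[Lemma~7.4]{lnsumk1}: use Theorem~\ref{thm:shell} to identify the unique increasing-label path with a shortest path from $x_0$ to $v$, observe that since $\Phi_J^+$ is an initial section of $<_\lambda^*$ any offending label forces the \emph{first} edge label into $\Phi_J^+$, and then the suffix of the path starting at $x_1 = x_0 s_\beta \in wW_J$ has strictly smaller length, contradicting the minimality of $\ell(x_0\Rightarrow v)$ over the coset.
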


\begin{lem}\label{lem2}
Assume that the paths with increasing edge labels from two elements $x_0,x_1$ in $wW_J$ to $v$ (cf. Theorem~{\rm \ref{thm:shell}~(1)}) 
have all labels in $\Phi^+\setminus\Phi_J^+$. Then $x_0=x_1$.
\end{lem}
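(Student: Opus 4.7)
The plan is to follow the strategy used for the analogue \cite[Lemma~7.5]{lnsumk1}: concatenate a suitable path inside the coset $wW_J$ with one of the given paths and use the uniqueness in Theorem~\ref{thm:shell}~(1) to force $x_0=x_1$.

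First I would record the crucial structural fact about $<_\lambda^*$. Since $<_\lambda$ was designed so that $\Phi_J^+$ is its final section (with some chosen reflection order on $\Phi_J^+$), reversing yields $<_\lambda^*$ with $\Phi_J^+$ as its \emph{initial section}. In particular, every element of $\Phi_J^+$ is $<_\lambda^*$-smaller than every element of $\Phi^+\setminus\Phi_J^+$, and the restriction of $<_\lambda^*$ to $\Phi_J^+$ is itself a reflection order on the root subsystem $\Phi_J$.

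Next, writing $y:=\lfloor x_0\rfloor=\lfloor x_1\rfloor\in W^J$ and $x_i=yu_i$ with $u_i\in W_J$, I would construct a directed path $\bq$ from $x_0$ to $x_1$ in $\QB(W)$ whose labels all lie in $\Phi_J^+$ and are strictly increasing in $<_\lambda^*$. The point is that $W_J$ is a Weyl group in its own right, carrying a quantum Bruhat graph $\QB(W_J)$; applying Theorem~\ref{thm:shell}~(1) to $\QB(W_J)$ with the restricted reflection order provides a (possibly trivial) path $\bar{\bq}$ from $u_0$ to $u_1$ with increasing labels. Left multiplication by $y$ embeds $\QB(W_J)$ into $\QB(W)$: length additivity $\ell(yu)=\ell(y)+\ell_{W_J}(u)$ for $y\in W^J$ takes care of the Bruhat condition, and the $W_J$-invariance of $\rho-\rho_J$ (which gives $\langle \rho,\alpha^\vee\rangle=\langle \rho_J,\alpha^\vee\rangle$ for $\alpha\in\Phi_J$) takes care of the quantum condition. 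The image of $\bar{\bq}$ is the desired $\bq$.

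Finally, I would concatenate $\bq$ with the hypothesized path $\bp_1$ from $x_1$ to $v$. Because the labels of $\bq$ all lie in the initial section $\Phi_J^+$ of $<_\lambda^*$ while those of $\bp_1$ all lie in $\Phi^+\setminus\Phi_J^+$, the concatenation $\bq\cdot\bp_1$ is a $<_\lambda^*$-increasing path from $x_0$ to $v$. By the uniqueness part of Theorem~\ref{thm:shell}~(1) it must coincide with the hypothesized path $\bp_0$ from $x_0$ to $v$. But $\bp_0$ has no edges labeled by roots in $\Phi_J^+$ by assumption, so the initial segment $\bq$ is empty, forcing $x_0=x_1$. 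The main (and really the only) technical obstacle in this plan is justifying the embedding $\QB(W_J)\hookrightarrow\QB(W)$ for both edge types; this is routine once one notices the $W_J$-invariance of $\rho-\rho_J$, but it is the step the parallel argument in \cite{lnsumk1} also leans on, and it should be pointed out rather than assumed.
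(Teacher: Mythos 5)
Your proposal is correct, and it reconstructs exactly the argument the paper delegates to \cite[Lemma~7.5]{lnsumk1} (adapted from the right-lift to the left-lift setting): build the $<_\lambda^*$-increasing path $\bq$ inside the coset via the embedding $\QB(W_J)\hookrightarrow\QB(W)$, concatenate with $\bp_1$, and apply the uniqueness from Theorem~\ref{thm:shell}~(1). The two technical ingredients you flag — length additivity $\ell(yu)=\ell(y)+\ell_{W_J}(u)$ for $y\in W^J$, $u\in W_J$, and the $W_J$-invariance of $\rho-\rho_J$ (giving $\langle\rho-\rho_J,\beta^\vee\rangle=0$ for $\beta\in\Phi_J$) — are precisely what make the embedding respect both Bruhat and quantum edges, and they are the same facts the cited proof relies on.
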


\begin{lem}\label{L:mainb1} 
Consider $\sigma,\tau\in W^J$ and $w_J\in W_J$. 
Write $\max(\sigma W_J,\preceq_{\tau w_J}^*) \in \sigma W_J$ as $\sigma w_J'$, with $w_J' \in W_{J}$.
\begin{enumerate}
\item There is a unique directed path in $\QB(W)$ from some $x\in\sigma W_J$ to $\tau w_J$ whose edge labels 
are increasing with respect to $<_\lambda^*$ and lie in $\Phi^+\setminus\Phi_J^+$. This path starts at $\sigma w_J'$.
\item Assume that there is a directed path from $\sigma$ to $\tau$ in $\QB_{b\lambda}(W^J)$ for some $b\in{\mathbb Q}$. 
Then the path in~{\rm (1)} from $\sigma w_J'$ to $\tau w_J$ is in $\QB_{b\lambda}(W)$.
\end{enumerate}
\end{lem}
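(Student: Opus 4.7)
The plan is to mirror the proof of Lemma~\ref{L:mainb}, i.e., \cite[Lemma~6.6]{lnsumk2}, systematically dualising every ingredient: the reflection order $<_\lambda$ is replaced by its reverse $<_\lambda^*$; the minimum with respect to $\preceq_\bullet$ by the maximum with respect to $\preceq_\bullet^*$; Proposition~\ref{mind} by Proposition~\ref{maxd}; and Lemmas~7.4~and~7.5 of~\cite{lnsumk1} by their analogues, Lemmas~\ref{lem1}~and~\ref{lem2}. Since each tool used in the original proof has such an analogue, the same reasoning transports verbatim to the dual setting.

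For part~(1), I would argue as follows. By Theorem~\ref{thm:shell}~(1), applied with the reflection order $<_\lambda^*$, for every $x\in\sigma W_J$ there is a unique directed path $\bp_x$ in $\QB(W)$ from $x$ to $\tau w_J$ whose edge labels are strictly increasing with respect to $<_\lambda^*$, and by Theorem~\ref{thm:shell}~(2) this path has the shortest possible length $\ell(x\Rightarrow \tau w_J)$. Proposition~\ref{maxd}, whose independent proof is precisely the content of Lemmas~\ref{lem1}~and~\ref{lem2}, guarantees that $x\mapsto\ell(x\Rightarrow \tau w_J)$ attains its minimum on $\sigma W_J$ at a unique element; by definition this minimiser is $\sigma w_J'=\max(\sigma W_J,\preceq_{\tau w_J}^*)$. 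Lemma~\ref{lem1} then asserts that the labels of $\bp_{\sigma w_J'}$ all lie in $\Phi^+\setminus\Phi_J^+$, while Lemma~\ref{lem2} asserts that $\sigma w_J'$ is the unique element of $\sigma W_J$ for which this holds. Part~(1) follows.

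For part~(2), the strategy is to produce \emph{some} directed path from $\sigma w_J'$ to $\tau w_J$ lying in $\QB_{b\lambda}(W)$, and then invoke Lemma~\ref{bqbpaths}: since the path of part~(1) is a shortest path by Theorem~\ref{thm:shell}~(2), it must then lie in $\QB_{b\lambda}(W)$ as well. To construct such a lift, I would process the assumed path $\sigma=\sigma_0\edge{\gamma_1}\sigma_1\edge{\gamma_2}\cdots\edge{\gamma_n}\sigma_n=\tau$ in $\QB_{b\lambda}(W^J)$ edge by edge, using a Deodhar-type lifting via Lemma~\ref{q2ls} (applied in the form dual to that of Lemma~\ref{L:mainb}~(2)) to produce coset representatives $\sigma_k u_k$ and directed paths in $\QB_{b\lambda}(W)$ connecting them. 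Inside the cosets $\sigma W_J$ and $\tau W_J$, edges labelled by $\beta\in\Phi_J^+$ are automatically in $\QB_{b\lambda}(W)$ because $\langle\lambda,\beta^\vee\rangle=0$, giving flexibility to connect arbitrary representatives.

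The main obstacle is to match both endpoints of the lift simultaneously: the naive edge-by-edge lift only pins down one endpoint per step. I expect to resolve this by running the construction in the direction compatible with $<_\lambda^*$, namely starting at $\tau w_J$ and lifting the $W^J$-path backwards; the lift will then terminate at $\max(\sigma W_J,\preceq_{\tau w_J}^*)=\sigma w_J'$ by the characterisation provided by Proposition~\ref{maxd} and Lemma~\ref{lem1}, in exact parallel with the way the forward lift in Lemma~\ref{L:mainb}~(2) lands at $\min(\tau W_J,\preceq_{\sigma w_J})=\tau w_J'$.
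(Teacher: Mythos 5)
Your proposal follows the same route as the paper, whose own argument is a one-line reduction: ``completely similar to Lemma~\ref{L:mainb}, based on Lemmas~\ref{lem1}, \ref{lem2}, \ref{bqbpaths}, and Theorem~\ref{thm:shell}~(2).'' Your treatment of part~(1) is a correct and complete unpacking of that reduction: Theorem~\ref{thm:shell}~(1) and (2) applied to $<_\lambda^*$ give, for each $x \in \sigma W_J$, the unique increasing-label path from $x$ to $\tau w_J$ of length $\ell(x\Rightarrow\tau w_J)$; Proposition~\ref{maxd} singles out $\sigma w_J'$ as the unique minimiser; Lemma~\ref{lem1} gives that its increasing-label path avoids $\Phi_J^+$; Lemma~\ref{lem2} gives uniqueness of the representative with this property.

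For part~(2), your top-level strategy --- exhibit one directed path from $\sigma w_J'$ to $\tau w_J$ in $\QB_{b\lambda}(W)$, then invoke Lemma~\ref{bqbpaths} together with the fact that the path of part~(1) is shortest (Theorem~\ref{thm:shell}~(2)) --- is exactly right, and it is the role the paper reserves for Lemma~\ref{bqbpaths}. However, two of your supporting steps do not hold up. First, Lemma~\ref{q2ls} goes the \emph{opposite} direction from what you need: it projects an edge of $\QB_{b\lambda}(W)$ to a path in $\QB_{b\lambda}(W^J)$, rather than lifting a parabolic edge to the full quantum Bruhat graph. Second, you assert that a backward edge-by-edge lift starting from $\tau w_J$ ``will then terminate at $\max(\sigma W_J,\preceq_{\tau w_J}^*)=\sigma w_J'$''; this is not justified by Proposition~\ref{maxd} and Lemma~\ref{lem1} (those characterise $\sigma w_J'$ via minimal $\ell(\cdot\Rightarrow\tau w_J)$ and a label condition on the \emph{shortest} path, neither of which controls where an arbitrary concatenated lift lands), and it is also unnecessary. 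The obstacle you raise about matching both endpoints is already dissolved by your own observation that edges labelled by $\beta\in\Phi_J^+$ are automatically in $\QB_{b\lambda}(W)$: lift each parabolic edge $\sigma_{k-1}\edge{\gamma_k}\sigma_k$ to a $\QB_{b\lambda}(W)$-step and splice in coset paths (labels in $\Phi_J^+$) to reach the next representative, then prepend a $\sigma W_J$-internal path from $\sigma w_J'$ to the starting representative and append a $\tau W_J$-internal path to $\tau w_J$. This produces the required auxiliary path without any constraint on where a ``natural'' lift terminates, after which Lemma~\ref{bqbpaths} finishes the argument.
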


\begin{proof} The proof is completely similar to that of Lemma~\ref{L:mainb}, i.e., \cite[Lemma~6.6]{lnsumk2}, based on Lemmas~\ref{lem1}, \ref{lem2}, \ref{bqbpaths}, and Theorem~\ref{thm:shell}~(2).
\end{proof}

\section{Chevalley formulas for semi-infinite flag manifolds}\label{csi}

Consider a connected, simply-connected simple algebraic group $G$ over $\mathbb{C}$, with Borel subgroup $B = T N$, maximal torus $T$, and unipotent radical $N$. The {\em semi-infinite flag manifold} $\mathbf{Q}_{G}^{\mathrm{rat}}$ associated to $G$ is an ind-scheme of infinite type whose set of $\mathbb{C}$-valued points is $G(\bC(\!(z)\!)\,)/\left(T(\bC)\cdot N(\bC(\!(z)\!))\right)$; 
note that $\mathbf{Q}_{G}^{\mathrm{rat}}$ is an inductive limit of copies of the (reduced) closed subscheme $\QQ$ of infinite type, introduced in \cite[Section~4.1]{FM} 
(for details, see \cite{kat2} and also \cite{kat4}). 
In this paper, we concentrate on the semi-infinite Schubert (sub)variety $\QQ = \QQ(e) \subset \mathbf{Q}_{G}^{\mathrm{rat}}$ corresponding to the identity element $e \in \Waf$, which we also call the semi-infinite flag manifold. 
Also, for each $x \in \Wafp = W \times Q^{\vee,+}$, one has the corresponding semi-infinite Schubert (sub)variety $\QQ(x) \subset \QQ$, which is the closure of the orbit under the Iwahori subgroup $\mathbf{I} \subset G(\mathbb{C}[\![z]\!])$ through the ($T \times \mathbb{C}^{*}$)-fixed point labeled by $x$ (in exactly the same way as in \cite[Section~4.2]{knsekt} and \cite[Section~2.3]{O}). 
The ($T \times \mathbb{C}^*$)-equivariant $K$-group $K_{T \times \mathbb{C}^*}(\QQ)$ of $\QQ$ 
{has a topological (in the sense of \cite[Proposition~5.11]{knsekt}) $\mathbb{Z}[q, q^{-1}][P]$-basis} of {\em semi-infinite Schubert classes}, and its multiplicative structure is determined by a {\em Chevalley formula}, which expresses the tensor product of a semi-infinite Schubert class with the class of a line bundle. In \cite{knsekt} and \cite{nospcf}, the Chevalley formulas were given in the case of a dominant and an anti-dominant weight $\lambda$, respectively. These formulas were expressed in terms of the quantum LS path model. We will express them in terms of the quantum alcove model based on the lexicographic $\lambda$-chain. The goal is to generalize these formulas for an arbitrary weight $\lambda$, and we will also see that an arbitrary $\lambda$-chain can be used. Throughout this section, $W_J$ is the stabilizer of $\lambda$, and we use freely the notation of Section~\ref{backg}.

More precisely, \revise{the ($T \times \mathbb{C}^*$)-equivariant $K$-group $K_{T \times \mathbb{C}^*}(\QQ)$ is the $\mathbb{Z}[q, q^{-1}][P]$-submodule of the Laurent series (in $q^{-1}$) extension $\mathbb{Z}(\!(q^{-1})\!)[P] \otimes_{\mathbb{Z}[\![q^{-1}]\!][P]} K_{\mathbf{I} \rtimes \mathbb{C}^{*}}^{\prime}(\mathbf{Q}_{G})$ of the equivariant (with respect to the Iwahori subgroup $\mathbf{I}$, together with the loop rotation action of $\mathbb{C}^{*}$) $K$-group $K_{\mathbf{I} \rtimes \mathbb{C}^{*}}^{\prime}(\mathbf{Q}_{G})$ of $\mathbf{Q}_{G}$, introduced in \cite{knsekt}, consisting of all infinite linear combinations of the classes $\OQG{x}$, $x \in \Wafp = W \times Q^{\vee,+}$, of the structure sheaf of the semi-infinite Schubert variety $\QQ(x) (\subset \QQ)$ with coefficient $a_{x} \in \mathbb{Z}[q, q^{-1}][P]$ such that the sum $\sum_{x \in \Wafp} \vert a_{x} \vert$ of the absolute values $\vert a_{x} \vert$ lies in $\mathbb{Z}_{\geq 0}[P](\!(q^{-1})\!)$; see \cite[Section~5]{knsekt} for details}. 
Here $\mathbb{Z}[P]$ is the group algebra of $P$, spanned by formal exponentials $\mathbf{e}^{\mu}$ for $\mu \in P$, with $\mathbf{e}^{\mu} \mathbf{e}^{\nu} =\mathbf{e}^{\mu+\nu}$, and it is identified with the representation ring of $T$. 
Note that for each $x \in \Wafp$ and $\nu \in P$, the twisted semi-infinite Schubert class $\OQGl{\nu} \cdot \OQG{x}$, defined by the tensor product in $K_{\mathbf{I} \rtimes \mathbb{C}^{*}}^{\prime}(\mathbf{Q}_{G})$, indeed lies in $K_{T \times \mathbb{C}^{*}}(\QQ)$; this is seen by using \cite[Theorem~5.16]{KLN1} and (the proof of) \cite[Corollary~5.12]{knsekt}. 
We also consider the $\mathbb{Z}[q, q^{-1}][P]$-submodule $K_{T \times \mathbb{C}^*}^{\prime}(\QQ)$ of $K_{T \times \mathbb{C}^*}(\QQ)$ consisting of all finite linear combinations of the classes $\OQG{x}$, $x \in \Wafp$, with coefficients in $\mathbb{Z}[q, q^{-1}][P]$.

The $T$-equivariant $K$-groups of $\QQ$, denoted by $K_{T}(\QQ)$ and $K_{T}^{\prime}(\QQ)$, are obtained from the $K_{T \times \mathbb{C}^*}(\QQ)$ and $K_{T \times \mathbb{C}^*}^{\prime}(\QQ)$, respectively, by the specialization $q = 1$. Hence the Chevalley formulas for $K_T(\QQ)$ (for arbitrary weights) and $K_{T}^{\prime}(\QQ)$ (for anti-dominant weights) are obtained from the corresponding ones for $K_{T \times \mathbb{C}^*}(\QQ)$ by setting $q=1$. 
More precisely, the $T$-equivariant $K$-group $K_{T}(\QQ)$ is defined to be the $\mathbb{Z}[P]$-module $\prod_{x \in \Wafp} \mathbb{Z}[P] \OQG{x}$ (direct product) consisting of all infinite linear combinations of the classes $\OQG{x}$, $x \in \Wafp$, with coefficients in $\mathbb{Z}[P]$; 
note that for each $\nu \in P$, a $\mathbb{Z}[P]$-linear endomorphism $\OQGl{\nu} \cdot \bullet$ of $K_{T}(\QQ)$ is induced from the $\mathbb{Z}[q, q^{-1}][P]$-linear endomorphism $\OQG{\nu} \cdot \bullet$ of $K_{T \times \mathbb{C}^{*}}(\QQ)$ by the specialization (of coefficients) at $q = 1$. 
\revise{Also, $K_{T}^{\prime}(\QQ)$ is defined to be the $\mathbb{Z}[P]$-submodule of $K_{T}(\QQ)$ consisting of all finite linear combinations of the classes $\OQG{x}$, $x \in \Wafp$, with coefficients in $\mathbb{Z}[P]$.}

\subsection{Chevalley formula for dominant weights} 
We start with the Chevalley formula for dominant weights, which was derived in terms of semi-infinite LS paths in~\cite{knsekt}, and then restated in \cite[Corollary~C.3]{nospcf} in terms of quantum LS paths. 

Let $\lambda = \sum_{i \in I} \lambda_i \varpi_i$ be a dominant weight. We denote by $\overline{{\rm Par}(\lambda)}$ the set of $I$-tuples of partitions ${\bm{\chi}} = (\chi^{(i)})_{i \in I}$ such that $\chi^{(i)}$ is a partition of length at most $\lambda_i$ for all $i \in I$. For ${\bm{\chi}} = (\chi^{(i)})_{i \in I} \in \overline{{\rm Par}(\lambda)}$, we set $|{\bm{\chi}}| := \sum_{i \in I} |\chi^{(i)}|$, with $|\chi^{(i)}|$ the size of the partition $\chi^{(i)}$. Also, set $\iota({\bm{\chi}}) := \sum_{i \in I} \chi^{(i)}_1 \alpha_i^{\vee} \in Q^{\vee,+}$, with $\chi^{(i)}_1$ the first part of the partition $\chi^{(i)}$.

\begin{thm}[\cite{knsekt,nospcf}]\label{chevdomqls} Let $x=wt_{\xi}\in \Wafp = W \times Q^{\vee,+}$. Then, in $K_{T \times \mathbb{C}^*}(\QQ)$, we have
\[\begin{split}&\OQGl{-w_\circ\lambda}\cdot\OQG{x}=\\[3mm]&\qquad=\sum_{\eta\in\QLS(\lambda)}\sum_{{\bm{\chi}}\in\overline{{\rm Par}(\lambda)}}q^{\Deg_w(\eta)-\langle\lambda,\xi\rangle-|{\bm{\chi}}|}\mathbf{e}^{\wt(\eta)}\OQG{\iota(\eta,w)t_{\xi+\xi(\eta,w)+\iota({\bm{\chi}})}}\,.\end{split}\]
\end{thm}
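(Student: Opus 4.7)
The plan is not to reprove this from scratch, since the formula appears as \cite[Corollary~C.3]{nospcf}, obtained by translating the original Chevalley formula for semi-infinite LS paths of \cite{knsekt} into the quantum LS path model. The task is therefore to recall the semi-infinite formula and then verify, term-by-term, that all of the data on the right-hand side above correspond to the data appearing in \cite{knsekt} under the weight-preserving bijection between semi-infinite LS paths and quantum LS paths of shape $\lambda$.

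Under this bijection, the sequence $(b_{k},\sigma_{k})$ defining $\eta\in\QLS(\lambda)$ encodes the same piecewise-linear path as its semi-infinite counterpart, so $\wt(\eta)=\eta(1)=v_{1}+\cdots+v_{t}$ and the tail degree \eqref{eq:deg} match directly against the semi-infinite weight and degree. The substantive step is the translation of the $w$-dependent data $\iota(\eta,w)$, $\xi(\eta,w)$, and $\Deg_{w}(\eta)$ defined through \eqref{eq:tiw}--\eqref{eq:degx}. For this, I would use Proposition~\ref{mind} to verify that each lift $w_{k}:=\min(\sigma_{k}W_{J},\preceq_{w_{k-1}})$ is well defined, Lemma~\ref{L:mainb} to describe it explicitly as the terminus of the unique directed path in $\QB(W)$ from $w_{k-1}$ into the coset $\sigma_{k}W_{J}$ with increasing edge labels lying in $\Phi^{+}\setminus\Phi_{J}^{+}$, and Lemma~\ref{relwt} to conclude that $\pair{\lambda}{\wt(w_{k-1}\Rightarrow w_{k})}=\pair{\lambda}{\wt^{J}(\sigma_{k-1}\Rightarrow\sigma_{k})}$. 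This coset-independence is exactly what is needed to equate $\Deg_{w}(\eta)$ with the semi-infinite tail degree, while $\iota(\eta,w)=w_{t}$ and $\xi(\eta,w)=\sum_{k}\wt(w_{k-1}\Rightarrow w_{k})$ record the lifted representative of $\iota(\eta)$ and the corresponding translation adjustment.

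The main obstacle is the bookkeeping required to reconcile the three distinct contributions to the translation element $\xi+\xi(\eta,w)+\iota(\bm{\chi})$ in the Schubert index and to the $q$-exponent $\Deg_{w}(\eta)-\pair{\lambda}{\xi}-|\bm{\chi}|$: the original translation $\xi$ coming from $x=wt_{\xi}$, the coroot contribution $\xi(\eta,w)$ from the quantum right Deodhar lifting recursion, and the loop-rotation contribution from $\bm{\chi}\in\overline{\Par(\lambda)}$ encoded by $\iota(\bm{\chi})=\sum_{i\in I}\chi^{(i)}_{1}\alpha_{i}^{\vee}$ together with the $q$-shift $-|\bm{\chi}|$ that records one $q$ per cell of each partition $\chi^{(i)}$. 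Once these identifications are made explicit, the formula follows by direct substitution into the semi-infinite Chevalley formula of \cite{knsekt}, precisely as carried out in \cite[Corollary~C.3]{nospcf}.
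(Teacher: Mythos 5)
Your approach is the same as the paper's: Theorem~\ref{chevdomqls} is not reproved here, but imported as a restatement of \cite[Corollary~C.3]{nospcf}, itself a translation of the semi-infinite LS path Chevalley formula of \cite{knsekt} into the quantum LS path model. The one ingredient you leave out is the small but necessary reindexing which the paper records in Remark~\ref{convqls}: the quantum LS paths appearing in \cite[Corollary~C.3]{nospcf} carry the data $1-b_i$ (rearranged increasingly) and the reversed sequence of directions relative to Definition~\ref{defls}, so the ``direct substitution'' you describe only lines up after this convention change. Your sketch of the lifting recursion behind $\iota(\eta,w)$, $\xi(\eta,w)$, $\Deg_w(\eta)$ is consistent with \eqref{eq:tiw}--\eqref{eq:degx}, but note that Lemma~\ref{relwt} only accounts for the $k\ge 2$ terms of \eqref{eq:degx} (for $k=1$ the element $w_0=w$ need not lie in $\sigma_1 W_J$, and this $k=1$ term is exactly the source of the $w$-dependence separating $\Deg_w(\eta)$ from $\Deg(\eta)$), so ``coset-independence via Lemma~\ref{relwt}'' is not what equates $\Deg_w(\eta)$ with the semi-infinite degree; that identification is carried out in \cite{nospcf} and is simply being cited here.
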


\begin{rema}\label{convqls} {\rm The original Chevalley formula for a dominant weight, as stated in \cite[Corollary~C.3]{nospcf}, is in terms of a slightly different version of quantum LS paths. They can be recovered from those in Definition~\ref{defls} simply by replacing the numbers $b_i$ with $1-b_i$ (arranged increasingly) and by reversing the second sequence in~\eqref{E:lsste}; indeed $\QB_{b\lambda}(W)$ is identical to $\QB_{(1-b)\lambda}(W)$. 
The same observation applies to the original Chevalley formula for an anti-dominant weight, as stated in \cite[Theorem~1]{nospcf}; see Theorem~\ref{chevantidomqls} below. }
\end{rema}

We now translate this formula in terms of the quantum alcove model for the lex $\lambda$-chain $\Gamma_{\rm lex}(\lambda)$. To this end, given $w\in W$, we construct a bijection $A\mapsto \eta$ between ${\mathcal A}(w,\Gamma_{\rm lex}(\lambda))$ and $\QLS(\lambda)$, for which several properties are then proved. 

In order to construct the forward map, let $A=\{j_1<\cdots<j_s\}$ be in ${\mathcal A}(w,\Gamma_{\rm lex}(\lambda))$. The corresponding heights are within the first range in~\eqref{ranges}. Consider the weakly increasing sequence of relative heights
\begin{equation}\label{relh}h_i:=\frac{l_{j_i}}{\langle\lambda,\beta_{j_i}^\vee\rangle}\,\in[0,1)\cap{\mathbb{Q}}\,,\;\;\;\;\;i=1,\ldots,s\,.\end{equation}
Let $0<b_2<\cdots<b_t<1$ be the distinct nonzero values in the set $\{h_1,\ldots,h_s\}$, and let $b_1:=0$, $b_{t+1}:=1$. For $k=1,\ldots,t$, let $I_k:=\{1 \leq i \leq s \, \mid \, h_i=b_k\}$; these sets are all non-empty, except perhaps $I_1$. 

Recall the path $\Pi(w,A)$ in $\QB(W)$ given by~\eqref{eqn:admissible}. We divide this path into subpaths corresponding to the sets $I_k$, and record the last element in each subpath; more precisely, for $k=0,\ldots,t$, we define the sequence of Weyl group elements
\[w_k:=w\prod_{i\in I_1\cup\cdots\cup I_k}^{\longrightarrow} r_{j_i}\,,\]
where the non-commutative product is taken in the increasing order of the indices $i$; in particular, $w_0:=w$. For $k=1,\ldots,t$, let $\sigma_k:=\lfloor w_k\rfloor\in W^J$. We can now define the forward map as
\[(w,A)\,\mapsto\,\eta:=((b_1,b_2,\ldots,b_t,b_{t+1});\,(\sigma_1,\ldots,\sigma_t))\,.\]
We will verify below that the image is in $\QLS(\lambda)$. 

The inverse map is constructed using the quantum right Deodhar lift and the related shellability property of the quantum Bruhat graph. We begin with a 
quantum LS path $\eta\in\QLS(\lambda)$ of the form \eqref{E:lsste}. Letting $w_0=w$, define the lifts
\begin{equation}\label{ups}
  w_k = \min(\sigma_kW_J,\preceq_{w_{k-1}})\qquad\text{for $k=1,\ldots,t\,$.}
\end{equation}
By Lemma \ref{L:mainb}, for each $k=1,\ldots,t$, there is a unique
directed path from $w_{k-1}$ to $w_k$ in $\QB_{b_k\lambda}(W)$ with labels in $\Phi^+\setminus\Phi_J^+$, 
which are increasing with respect to the reflection order $<_\lambda$. 
Let us replace each label $\beta$ in this path with the pair $(\beta,\,b_k\pair{\lambda}{\beta^\vee})$, 
where the second component is in $\{0,1,\ldots,\pair{\lambda}{\beta^\vee}-1 \}$, by the definition of $\QB_{b_k\lambda}(W)$.  
Thus, each such pair defines a $\lambda$-hyperplane. 
By concatenating these paths, we obtain a directed path in $\QB(W)$ starting at $w$, together with
a sequence of $\lambda$-hyperplanes. We will show that this sequence is lex-increasing, and thus it defines a $w$-admissible subset.

\begin{prop}\label{qam2qls} The map $A\mapsto \eta$ constructed above is a bijection between ${\mathcal A}(w,\Gamma_{\rm lex}(\lambda))$ and $\QLS(\lambda)$. It maps the corresponding parameters in the following way{\rm :}
\begin{equation}\label{mapstats}\wt(w,A)=\wt(\eta),\;\; {\rm end}(w,A)=\iota(\eta,w),\;\; {\rm down}(w,A)=\xi(\eta,w),\;\; -{\rm height}(w,A)=\Deg_w(\eta).\end{equation}
\end{prop}

\begin{proof} 
We start by showing that the forward map is well-defined. By the definition of relative height, the subpath of $\Pi(w,A)$ from $w_{k-1}$ to $w_k$ is in $\QB_{b_k\lambda}(W)$. Thus, Lemma~\ref{q2ls} implies that $\eta\in\QLS(\lambda)$. For the well-definedness of the
inverse map, it suffices to prove that the constructed sequence of $\lambda$-hyperplanes is lex-increasing. Indeed, the relative heights of the 
$\lambda$-hyperplanes are the numbers $b_k$, and hence they weakly increase by construction; on the other hand, 
within the same relative height, the $\lambda$-hyperplanes increase because 
of the compatibility of the reflection order $<_\lambda$ with the lex $\lambda$-chain (see Remark~\ref{remlex}).

To show that the two maps are mutually inverse, the crucial fact to check is that
the forward map composed with the backward one is the identity.
This follows from the uniqueness part in Lemma~\ref{L:mainb}~(1), after recalling again Remark~\ref{remlex}. 
In particular, the subsequence $w_k\in W$ of the original path $\Pi(w,A)$ in $\QB(W)$ is reconstructed by the inverse map via \eqref{ups}; furthermore, this construction is identical with the one in~\eqref{eq:tiw}, on which the definitions of $\iota(\eta,w)$, $\xi(\eta,w)$, and $\Deg_w(\eta)$ are based. Thus, the last three properties in~\eqref{mapstats} follow. To be more precise, for the last one we note that, if the relative height of the $\lambda$-hyperplane $H_{\beta_j,-l_j}$ is $b_k$ (for $j\in A$, cf.~\eqref{relh}), then we have
\begin{equation}\label{contribheight}(1-b_k)\pair{\lambda}{\beta_j^\vee}=\pair{\lambda}{\beta_j^\vee}-l_j=\widetilde{l}_j\,.\end{equation}

Finally, the weight preservation follows via the same argument as in the proof of~\cite[Proposition~4.18]{LS}, which extends to the present setup by~\cite[Remark~4.19]{LS}. Indeed, the above construction of the map from ${\mathcal A}(w,\Gamma_{\rm lex}(\lambda))$ to $\QLS(\lambda)$ is completely similar to that of the map in the mentioned proof. 
\end{proof}

We translate the formula in Theorem~\ref{chevdomqls} to the quantum alcove model via Proposition~\ref{qam2qls}.

\begin{thm}\label{chevdomqam} Let $\lambda$ be a dominant weight, $\Gamma_{\rm lex}(\lambda)$ the lex $\lambda$-chain, and let $x=wt_{\xi}\in \Wafp$. Then, in $K_{T \times \mathbb{C}^*}(\QQ)$, we have
\[\begin{split}&\OQGl{-w_\circ\lambda}\cdot\OQG{x}=\\[3mm]&\qquad \sum_{A\in{\mathcal A}(w,\Gamma_{\rm lex}(\lambda))}\sum_{{\bm{\chi}}\in\overline{{\rm Par}(\lambda)}}q^{-{\rm height}(w,A)-\langle\lambda,\xi\rangle-|{\bm{\chi}}|}\mathbf{e}^{{\rm wt}(w,A)}\OQG{{\rm end}(w,A)t_{\xi+{\rm down}(w,A)+\iota({\bm{\chi}})}}\,.\end{split}\]
\end{thm}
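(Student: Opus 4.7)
The plan is to deduce Theorem~\ref{chevdomqam} directly from Theorem~\ref{chevdomqls} by applying the bijection of Proposition~\ref{qam2qls} term by term. Write $x = w t_{\xi}$ with $w \in W$ and $\xi \in Q^{\vee,+}$. For this fixed $w$, Proposition~\ref{qam2qls} furnishes a bijection
\[
\Phi_{w} \colon \mathcal{A}(w, \Gamma_{\mathrm{lex}}(\lambda)) \;\longrightarrow\; \QLS(\lambda),\qquad A \longmapsto \eta,
\]
together with the four parameter identities recorded in \eqref{mapstats}. I will use $\Phi_w$ as a change of summation variable in the outer sum of Theorem~\ref{chevdomqls}, leaving the inner sum over $\bm{\chi} \in \overline{\mathrm{Par}(\lambda)}$ intact.

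Under the substitution $\eta = \Phi_w(A)$, each factor in a summand transforms as follows. By $\wt(w,A) = \wt(\eta)$, the factor $\mathbf{e}^{\wt(\eta)}$ becomes $\mathbf{e}^{\wt(w,A)}$. By $\wend(w,A) = \iota(\eta,w)$ and $\dn(w,A) = \xi(\eta,w)$, the affine Weyl group element $\iota(\eta,w)\, t_{\xi + \xi(\eta,w) + \iota(\bm{\chi})}$ indexing the semi-infinite Schubert class becomes $\wend(w,A)\, t_{\xi + \dn(w,A) + \iota(\bm{\chi})}$. Finally, by $-\hgt(w,A) = \Deg_w(\eta)$, the power $q^{\Deg_w(\eta) - \langle\lambda,\xi\rangle - |\bm{\chi}|}$ becomes $q^{-\hgt(w,A) - \langle\lambda,\xi\rangle - |\bm{\chi}|}$. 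Performing all four substitutions simultaneously converts the right-hand side of Theorem~\ref{chevdomqls} into the right-hand side of Theorem~\ref{chevdomqam}, and the desired equality in $K_{T\times\mathbb{C}^*}(\QQ)$ follows.

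There is essentially no obstacle at the level of this theorem itself: its entire combinatorial content has already been packaged into Proposition~\ref{qam2qls}. The nontrivial work sits upstream, in the verification that the inverse map $\eta \mapsto A$ produces a sequence of $\lambda$-hyperplanes that is lex-increasing, which in turn rests on Lemma~\ref{q2ls}, the quantum right Deodhar lift of Lemma~\ref{L:mainb}, and the compatibility between the reflection ordering $<_\lambda$ and the lex $\lambda$-chain noted in Remark~\ref{remlex}. Apart from citing Proposition~\ref{qam2qls} and performing the term-by-term substitution above, no further argument is needed.
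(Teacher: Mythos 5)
Your proof is correct and follows exactly the same route as the paper, which states Theorem~\ref{chevdomqam} as an immediate translation of Theorem~\ref{chevdomqls} via the bijection and parameter identities of Proposition~\ref{qam2qls}. You have simply spelled out the term-by-term change of variables in more detail than the paper does, and you correctly identify that all the substantive work lives in Proposition~\ref{qam2qls}.
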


\subsection{Chevalley formula for anti-dominant weights} 

We continue with the Chevalley formula for an anti-dominant weight $\lambda$, which was derived in terms of quantum LS paths in~\cite[Theorem~1]{nospcf}. 

\begin{thm}[\cite{nospcf}]\label{chevantidomqls} Let $\lambda$ be an anti-dominant weight, and let $x=wt_{\xi}\in \Wafp$. Then, in $K_{T \times \mathbb{C}^*}^{\prime}(\QQ) \subset K_{T \times \mathbb{C}^*}(\QQ)$, we have 
\[\begin{split}&\OQGl{-w_\circ\lambda}\cdot\OQG{x}=\\[3mm]&\qquad=\sum_{v\in W}\sum_{\substack{\eta\in\QLS(-\lambda) \\ \kappa(\eta,v)=w}}(-1)^{\ell(v)-\ell(w)}q^{-\Deg(\eta)-\langle\lambda,\xi\rangle}\mathbf{e}^{-\wt(\eta)}\OQG{vt_{\xi+\zeta(\eta,v)}}\,.\end{split}\]
\end{thm}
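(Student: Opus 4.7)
The plan is to mirror the structure of the dominant Chevalley formula (Theorem \ref{chevdomqls}), while accounting for the cohomological subtleties that arise when $\lambda$ is anti-dominant. For a dominant weight the line bundle on $\QQ$ is globally generated and tensoring gives an effective combination; for an anti-dominant weight this fails, and the formula must incorporate alternating signs $(-1)^{\ell(v)-\ell(w)}$ arising from an Euler-characteristic (Koszul-type) computation on an appropriate resolution of the structure sheaf of the relevant Schubert subvariety.

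First I would set up the problem geometrically: express $[\mathcal{O}_{\QQ}(-w_\circ\lambda)] \cdot \OQG{x}$ through the action of line bundles on the semi-infinite Bott--Samelson-type tower used in \cite{knsekt}, and use $T\times\mathbb{C}^*$-equivariant localization to reduce to understanding matrix coefficients at torus-fixed points. The fixed points are indexed by $\Wafp$, and the localization data can be recovered from a recursion on the indexing pair $(w,\xi)$ analogous to the one used in the dominant case. Duality between $\OQGl{\lambda}$ and $\OQGl{-\lambda}$ then furnishes, at the level of equivariant localization, an inversion relation converting the effective, partition-decorated formula of Theorem \ref{chevdomqls} (applied to the dominant weight $-\lambda$) into a finite, signed sum.

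The combinatorial translation is carried out in terms of quantum LS paths of the dominant weight $-\lambda$. The appearance of the final direction $\kappa(\eta,v)$, rather than the initial direction $\iota(\eta,w)$ of the dominant formula, is the direct combinatorial shadow of this duality: whereas dominant tensoring builds the path starting at $w$ via the quantum right Deodhar lifts of Proposition \ref{mind}, anti-dominant tensoring builds the path ending at $v$ via the quantum left Deodhar lifts of Proposition \ref{lDl}, based on the dual tilted Bruhat order $\preceq_v^*$. The statistic $\Deg(\eta)$ then records the total quantum translation, while $\zeta(\eta,v)$ produces the translation component $t_{\xi+\zeta(\eta,v)}$ of the output index.

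The hard part will be matching the signed inclusion-exclusion. One must verify that after extending the sum from $\{v : \kappa(\eta,v)=w\}$ over all $v\in W$, the surplus contributions telescope to zero. This would be done by pairing each extraneous $(v,\eta)$ with another using a fixed-point-free involution built from the shellability of $\QB(W)$ provided by Theorem \ref{thm:shell}, and controlled by the reversed reflection ordering $<_\lambda^*$ together with Lemmas \ref{lem1}, \ref{lem2}, and \ref{L:mainb1}. A secondary technical obstacle is the convergence issue: the left-hand side lies in $K_{T\times\mathbb{C}^*}^{\prime}(\QQ)$, so one must also check that only finitely many $(v,\eta)$ contribute for each output index $vt_{\xi+\zeta(\eta,v)}$, which follows from boundedness of $\Deg(\eta)$ once $\zeta(\eta,v)$ is fixed.
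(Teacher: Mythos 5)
This theorem is not proved in the paper you were given: it is cited verbatim from \cite[Theorem~1]{nospcf} (Naito--Orr--Sagaki), and the present paper uses it as an \emph{input}. Concretely, Section~4 takes Theorem~\ref{chevantidomqls} as given, translates it into the quantum alcove model via the bijection of Proposition~\ref{qam2qls1} (yielding Theorem~\ref{chevantidomqam}), and then combines it with the dominant counterpart (Theorem~\ref{chevdomqam}) and the Yang--Baxter machinery of Section~\ref{secproof} to establish the new, general Chevalley formula (Theorem~\ref{genchev}). So there is no internal proof to compare your sketch against.

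As a freestanding proof sketch, yours has some correct intuitions but also real gaps. The observation that $\eta$ ranges over $\QLS(-\lambda)$ for the dominant weight $-\lambda$, and that $\kappa(\eta,v)$ together with the dual tilted Bruhat order and quantum left Deodhar lifts (Propositions~\ref{lDl},~\ref{maxd}) plays the role that $\iota(\eta,w)$ and the right lifts play in the dominant formula, is exactly the right heuristic. However, the ``duality/inversion at the level of equivariant localization'' step is left completely open: the dominant formula is an \emph{infinite}, unsigned, partition-decorated sum taking values in $K_{T\times\mathbb{C}^*}(\QQ)$, while the anti-dominant one is a \emph{finite}, signed sum taking values in the strictly smaller submodule $K'_{T\times\mathbb{C}^*}(\QQ)$; no generic localization or Koszul-resolution argument converts one shape into the other, and the finiteness is itself a nontrivial output that your sketch treats as a routine convergence check at the very end. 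In addition, the step where you ``extend the sum from $\{v:\kappa(\eta,v)=w\}$ over all $v\in W$'' and cancel via a fixed-point-free involution misreads the stated formula: the double sum is \emph{already} over all $v\in W$, with the constraint $\kappa(\eta,v)=w$ placed on the inner index $\eta$, so there is no larger unconstrained sum from which an inclusion--exclusion would be taking place. To turn your outline into a proof one would instead have to make precise the module-theoretic inversion of the tensoring-by-$\OQGl{-w_\circ(-\lambda)}$ endomorphism against the known dominant Pieri--Chevalley formula of \cite{knsekt}, and show that the combinatorics of $\kappa(\eta,v)$ and $\zeta(\eta,v)$ produces the explicit inverse; that is the substance of \cite{nospcf}, and it is missing here.
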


We now translate this formula in terms of the quantum alcove model for the lex $\lambda$-chain $\Gamma_{\rm lex}(\lambda)$, which is defined just as the reverse of the lex $(-\lambda)$-chain described in Proposition~\ref{speciallch}; note that the alcove path corresponding to the former (ending at $A_\circ-\lambda$) is just the translation by $-\lambda$ of the alcove path corresponding to the latter (ending at $A_\circ+\lambda$). Given $w\in W$, we construct a bijection $A\mapsto (\eta,v)$ between ${\mathcal A}(w,\Gamma_{\rm lex}(\lambda))$ and the set
\[\bm{\QLS}_w(-\lambda):=\{(\eta,v)\, \mid \,\eta\in\QLS(-\lambda),\,v\in W,\,\kappa(\eta,v)=w\} \,.\]

The construction of the bijection is very similar to the one above, in the dominant case, and so we only highlight the differences. In order to construct the forward map, let $A=\{j_1<\cdots<j_s\}$ be in ${\mathcal A}(w,\Gamma_{\rm lex}(\lambda))$. The corresponding heights are within the second range in~\eqref{ranges}, while the relative heights $h_i$, defined as in~\eqref{relh}, belong to $(0,1]\cap{\mathbb Q}$. The numbers $b_k$ are defined in the same way, for $k=1,\ldots,t+1$, as are the sets $I_k$, for $k=2,\ldots,t+1$; all of the latter are non-empty, except perhaps $I_{t+1}$. Then, for $k=1,\ldots,t+1$, we define
\[w_k:=w\prod_{i\in I_2\cup\cdots\cup I_k}^{\longrightarrow} r_{j_i}\;\;\;\;\;\mbox{(in particular, $w_1:=w$)}\,,\]
and the forward map as
\[(w,A)\,\mapsto\,(\eta:=((b_1,b_2,\ldots,b_t,b_{t+1});\,(\sigma_1,\ldots,\sigma_t)),\;w_{t+1})\,,\;\;\;\;\;\mbox{where $\sigma_k:=\lfloor w_k\rfloor$}\,.\]

For the inverse map, we start with $(\eta,v)\in\bm{\QLS}_w(-\lambda)$, and construct the sequence $w_k$, for $k=1,\ldots,t+1$, via the quantum left Deodhar lifts, as in~\eqref{eq:tiw1}. By Lemma \ref{L:mainb1}, for each $k=1,\ldots,t$ there is a unique
directed path from $w_{k}$ to $w_{k+1}$ in $\QB_{b_k\lambda}(W)$ with labels $|\beta|$ for $\beta\in\Phi^-\setminus\Phi_J^-$, 
which are increasing with respect to the reflection order $<_\lambda^*$. Like in the dominant case, by concatenating these paths we obtain a directed path in $\QB(W)$ starting at $w$, together with
a sequence of $\lambda$-hyperplanes $(\beta,\,b_k\pair{\lambda}{\beta^\vee})$, 
where $\beta$ are the above labels, and the second component is in $\{1,\ldots,\pair{\lambda}{\beta^\vee} \}$.

\begin{prop}\label{qam2qls1}
The map $A\mapsto (\eta,v)$ constructed above is a bijection between ${\mathcal A}(w,\Gamma_{\rm lex}(\lambda))$ and $\bm{\QLS}_w(-\lambda)$. It maps the corresponding parameters in the following way{\rm :} 
\begin{equation}\label{mapstats1}\wt(w,A)=-\wt(\eta),\;\; {\rm end}(w,A)=v,\;\; {\rm down}(w,A)=\zeta(\eta,v),\;\; {\rm height}(w,A)=\Deg(\eta).\end{equation}
\end{prop}

\begin{proof} This proof is completely similar to that of Proposition~\ref{qam2qls}, and so we highlight the minor differences. To show that the two maps are mutually inverse, we use the uniqueness part in Lemma~\ref{L:mainb1}~(1). 

Another difference is concerned with proving ${\rm height}(w,A)=\Deg(\eta)$. Note first that 
\[{\rm height}(w,A)={\rm height}(w,\overline{A})\,,\]
 where $\overline{A}$ is the subset of $A$ which corresponds to ignoring the $\lambda$-hyperplanes of relative height equal to $1$; indeed, the contribution of each such hyperplane is $0$, see~\eqref{defheight}. Thus, ${\rm height}(w,A)$ is defined based on shortest directed paths in $\QB(W)$ from $w_{k-1}$ to $w_{k}$, for $k=2,\ldots,t$. Comparing with the definition~\eqref{eq:deg} of $\Deg(\eta)$, where $\sigma_k:=\lfloor w_k\rfloor$, and using Lemma~\ref{relwt}, as well as the analogue of~\eqref{contribheight}, the desired equality is proved.
\end{proof}

We translate the formula in Theorem~\ref{chevantidomqls} to the quantum alcove model via Proposition~\ref{qam2qls1}. We use the notation $|A|$ to indicate the cardinality of the set $A$.

\begin{thm}\label{chevantidomqam} Let $\lambda$ be an anti-dominant weight, $\Gamma_{\rm lex}(\lambda)$ the lex $\lambda$-chain, and let $x=wt_{\xi}\in \Wafp$. Then, in $K_{T \times \mathbb{C}^*}^{\prime}(\QQ) \subset K_{T \times \mathbb{C}^*}(\QQ)$, we have
\[\begin{split}&\OQGl{-w_\circ\lambda}\cdot\OQG{x}=\\[3mm]&\qquad\sum_{A\in{\mathcal A}(w,\Gamma_{\rm lex}(\lambda))}(-1)^{|A|}q^{-{\rm height}(w,A)-\langle\lambda,\xi\rangle}\mathbf{e}^{{\rm wt}(w,A)}\OQG{{\rm end}(w,A)t_{\xi+{\rm down}(w,A)}}\,.
\end{split}\]
\end{thm}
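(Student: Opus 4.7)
The plan is to deduce Theorem~\ref{chevantidomqam} from Theorem~\ref{chevantidomqls} by applying the bijection $A \leftrightarrow (\eta,v)$ of Proposition~\ref{qam2qls1} term-by-term. Fix $x = w t_\xi \in \Wafp$ and the lex $\lambda$-chain $\Gamma_{\rm lex}(\lambda)$. Starting from the right-hand side of Theorem~\ref{chevantidomqls}, I replace the outer double sum (over $v \in W$ and $\eta \in \QLS(-\lambda)$ with $\kappa(\eta,v) = w$) by a single sum over $A \in \mathcal{A}(w,\Gamma_{\rm lex}(\lambda))$, which is legitimate because Proposition~\ref{qam2qls1} asserts exactly that these two index sets are in bijection via the construction built from quantum left Deodhar lifts.

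Once the index sets are identified, the four identities in \eqref{mapstats1} rewrite each factor of the summand: $\mathbf{e}^{-\wt(\eta)} = \mathbf{e}^{\wt(w,A)}$, the $q$-exponent $-\Deg(\eta) - \langle\lambda,\xi\rangle$ becomes $-\hgt(w,A) - \langle\lambda,\xi\rangle$, the semi-infinite Schubert class $\OQG{v t_{\xi + \zeta(\eta,v)}}$ becomes $\OQG{\wend(w,A) \, t_{\xi + \dn(w,A)}}$. So the only thing that remains to be matched is the sign $(-1)^{\ell(v) - \ell(w)}$ with $(-1)^{|A|}$.

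The main point of the proof is therefore the sign identity $(-1)^{\ell(v) - \ell(w)} = (-1)^{|A|}$, where $v = \wend(w,A)$. This follows by examining the path $\Pi(w,A)$ in $\QB(W)$ from $w$ to $v$, which has exactly $|A|$ edges. Along a Bruhat edge the length changes by $+1$, while along a quantum edge labeled $\gamma$ it changes by $1 - 2 \langle \rho, \gamma^\vee \rangle$, which is odd since $2\langle \rho, \gamma^\vee\rangle$ is even. Thus each edge of $\Pi(w,A)$ contributes an odd change in length, so $\ell(v) - \ell(w) \equiv |A| \pmod 2$, as desired.

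The main obstacle, as already anticipated, is this parity computation; everything else is a routine substitution using Proposition~\ref{qam2qls1}. A minor technical point worth checking is that the indexing conventions for the $b_k$ and the sets $I_k$ in the anti-dominant construction (where the relative heights lie in $(0,1]$ rather than $[0,1)$) do produce the same sign as in Theorem~\ref{chevantidomqls}, since $\Deg(\eta)$ in \eqref{eq:deg} is defined with the weights $\wt_J(\sigma_{k-1} \Rightarrow \sigma_k)$ and the coefficients $(1-b_k)$ arising from those heights; this is verified in the argument for the last equality of \eqref{mapstats1} inside the proof of Proposition~\ref{qam2qls1}, so no further work is needed here.
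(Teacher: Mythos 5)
Your proof is correct and follows the paper's approach exactly: Theorem~\ref{chevantidomqam} is presented as a direct translation of Theorem~\ref{chevantidomqls} via the bijection of Proposition~\ref{qam2qls1} and the identities~\eqref{mapstats1}. Your explicit parity argument for the sign identity $(-1)^{\ell(v)-\ell(w)}=(-1)^{|A|}$ — namely, that every edge of $\QB(W)$ changes the length by an odd amount because $2\pair{\rho}{\gamma^\vee}$ is even — is correct and usefully fills in a detail the paper leaves to the reader.
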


\subsection{Chevalley formula for arbitrary weights} 
We now state the Chevalley formula for an arbitrary weight $\lambda=\sum_{i\in I}\lambda_i\varpi_i$; 
this is the natural common generalization of Theorems~\ref{chevdomqam} and \ref{chevantidomqam}. 
In order to exhibit the general formula, let $\overline{{\rm Par}(\lambda)}$ denote 
the set of $I$-tuples of partitions $\bchi=(\chi^{(i)})_{i\in I}$ such that $\chi^{(i)}$ is 
a partition of length at most $\max(\lambda_i,0)$.

\begin{thm}\label{genchev} Let $\lambda$ be an arbitrary weight, 
$\Gamma(\lambda)$ an arbitrary reduced $\lambda$-chain, and let $x=wt_{\xi}\in \Wafp = W \times Q^{\vee,+}$. 
Then, in $K_{T\times{\mathbb C}^*}(\QQ)$, we have
\[\begin{split}&\OQGl{-w_\circ\lambda}\cdot\OQG{x}=\\[3mm]&\quad\!\!\!\!\sum_{A\in{\mathcal A}(w,\Gamma(\lambda))}\sum_{{\bm{\chi}}\in\overline{{\rm Par}(\lambda)}}(-1)^{n(A)}q^{-{\rm height}(w,A)-\langle\lambda,\xi\rangle-|{\bm{\chi}}|}\mathbf{e}^{{\rm wt}(w,A)}\OQG{{\rm end}(w,A)t_{\xi+{\rm down}(w,A)+\iota({\bm{\chi}})}}\,,
\end{split}\]
where $n(A)$, for $A=\{j_1<\cdots<j_s\}$, is the number of negative roots in $\{\beta_{j_1},\ldots,\beta_{j_s}\}$.
\end{thm}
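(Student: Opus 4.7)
The plan is to reduce the theorem to the already-established dominant and anti-dominant cases by splitting $\lambda$, and then to extend the resulting identity from one carefully chosen $\lambda$-chain to an arbitrary reduced $\lambda$-chain. Write $\lambda = \lambda^{+} + \lambda^{-}$, with $\lambda^{+} := \sum_{\lambda_i > 0}\lambda_i\varpi_i$ dominant and $\lambda^{-} := \sum_{\lambda_i < 0}\lambda_i\varpi_i$ anti-dominant. Because tensoring line bundles on $\QQ$ gives $\OQGl{-w_\circ\lambda} = \OQGl{-w_\circ\lambda^{-}}\cdot\OQGl{-w_\circ\lambda^{+}}$, associativity in $K_{T \times \mathbb{C}^{*}}(\QQ)$ yields
$$\OQGl{-w_\circ\lambda}\cdot\OQG{x} \;=\; \OQGl{-w_\circ\lambda^{+}}\cdot\bigl(\OQGl{-w_\circ\lambda^{-}}\cdot\OQG{x}\bigr).$$
I would expand the inner bracket by Theorem~\ref{chevantidomqam} (using the lex $\lambda^{-}$-chain, whose roots are all negative) and then expand each resulting term by Theorem~\ref{chevdomqam} (using the lex $\lambda^{+}$-chain, whose roots are all positive).

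Next, I would pick the reduced $\lambda$-chain $\Gamma_{0}(\lambda)$ obtained by concatenating the alcove path for $\Gamma_{\rm lex}(\lambda^{-})$, from $A_{\circ}$ to $A_{-\lambda^{-}}$, with the appropriately translated alcove path for $\Gamma_{\rm lex}(\lambda^{+})$, from $A_{-\lambda^{-}}$ to $A_{-\lambda}$. Its first segment carries only negative roots, its second only positive roots. I expect a natural bijection $A \leftrightarrow (A^{-}, A^{+})$ between $\mathcal{A}(w, \Gamma_{0}(\lambda))$ and the set of pairs with $A^{-} \in \mathcal{A}(w, \Gamma_{\rm lex}(\lambda^{-}))$ and $A^{+} \in \mathcal{A}(\wend(w, A^{-}), \Gamma_{\rm lex}(\lambda^{+}))$, obtained by splitting a subset according to the two segments. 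Under this bijection the statistics add: $\wend(w, A) = \wend(\wend(w, A^{-}), A^{+})$, $\wt(w, A) = \wt(w, A^{-}) + \wt(\wend(w, A^{-}), A^{+})$, and similarly for $\hgt$ and $\dn$; moreover $(-1)^{n(A)} = (-1)^{|A^{-}|}$ matches the sign produced by Theorem~\ref{chevantidomqam}, while $\overline{\Par(\lambda)} = \overline{\Par(\lambda^{+})}$ accounts for the partition factor. Substituting these identifications into the iterated formula yields the theorem for $\Gamma_{0}(\lambda)$.

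The main obstacle is then to upgrade the identity from $\Gamma_{0}(\lambda)$ to an arbitrary reduced $\lambda$-chain $\Gamma(\lambda)$. Any two reduced $\lambda$-chains are connected by a sequence of elementary moves corresponding to braid relations among reduced words of the affine Weyl group element sending $A_{\circ}$ to $A_{-\lambda}$. I would check that the right-hand side of the theorem is invariant under each such move, by exhibiting, for every local reconfiguration of the chain, a weight- and statistic-preserving bijection on the affected admissible subsets. This is the analogue of the Yang--Baxter / crystal-isomorphism arguments familiar from the non-quantum and dominant quantum alcove models, but now it has to accommodate mixed-sign weights, an arbitrary base element $w$, the sign $(-1)^{n(A)}$, and both kinds of edges in the quantum Bruhat graph. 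The verification will rest on the shellability of $\QB(W)$ (Theorem~\ref{thm:shell}) together with the Deodhar-lift machinery (Lemmas~\ref{L:mainb} and~\ref{L:mainb1}), and I expect this to be the most delicate part of the argument.
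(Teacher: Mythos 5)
Your overall strategy — split $\lambda = \lambda^{+} + \lambda^{-}$, iterate the dominant and anti-dominant Chevalley formulas, concatenate the two alcove paths, and then move from the concatenation to an arbitrary reduced $\lambda$-chain — matches the structure of the paper's proof (the paper multiplies in the opposite order, but that is inessential). The concatenation bijection $A \leftrightarrow (A^{-},A^{+})$, the additivity of $\wend$, $\wt$, $\hgt$, $\dn$, the sign bookkeeping $(-1)^{n(A)} = (-1)^{|A^{-}|}$, and the identification $\overline{\Par(\lambda)} = \overline{\Par(\lambda^{+})}$ all appear in the paper and you have stated them correctly.

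However, there is a genuine gap at the pivot of your argument: the concatenation $\Gamma_{0}(\lambda)$ is \emph{not} a reduced $\lambda$-chain in general, so you cannot pass from it to an arbitrary reduced $\lambda$-chain using only braid moves among reduced words. For instance, in type $A_{2}$ with $\lambda = \varpi_{1} - \varpi_{2}$, one has $\langle\lambda,\theta^{\vee}\rangle = 0$, so a reduced $\lambda$-chain has length $2$, whereas the $\varpi_{1}$-chain and the $(-\varpi_{2})$-chain each have length $2$, giving a concatenation of length $4$. Because of this, the passage from $\Gamma_{0}$ to a reduced $\Gamma(\lambda)$ requires, in addition to the Yang--Baxter move (YB), the deletion move (D) which removes a segment $\beta,\,-\beta$; this takes one outside the realm of reduced words, and your proposed invariance argument does not address it. The deletion move is exactly where a further nontrivial issue arises: invariance under (D) rests on $\SR_{\beta}\SR_{-\beta}$ being the identity, which fails when $\beta$ is a simple root (it equals $1 - \st_{i}$). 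The paper therefore needs its Claim~\ref{c:main} to show that, starting from $\Gamma_{0}$ and applying (YB) and (D), no intermediate chain ever contains both $\alpha_{i}$ and $-\alpha_{i}$ — precisely because, for each $i$, the concatenation $\Gamma_{0}$ contains at most one of $\pm\alpha_{i}$, depending on the sign of $\lambda_{i}$. This inductive control over the simple roots is a necessary ingredient that your outline omits. Finally, for the invariance under (YB), a direct ``statistic-preserving bijection'' on admissible subsets is delicate to make rigorous when signed roots are present, since terms can cancel rather than match up; the paper sidesteps this by working at the level of the operators $\SR_{\Gamma,\bk}$ and invoking the Yang--Baxter relation (Propositions~\ref{prop:YBq1} and \ref{prop:YB}), which reduces the combinatorics to the already-known $q=1$ case of Brenti--Fomin--Postnikov. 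You would need either to adopt that operator formalism or to supply a genuinely signed bijection, which you have not sketched.
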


\begin{exa}{\rm 
Assume that $\mathfrak{g}$ is of type $A_2$, and $\lambda=\varpi_1-\varpi_2$. 
Then, $\Gamma(\lambda):=(\alpha_{1},\,-\alpha_{2})$ is a reduced $\lambda$-chain.
Assume that $w = s_{1}=s_{\alpha_{1}}$. In this case, we see that
${\mathcal A}(s_{1},\Gamma(\lambda))=\bigl\{\emptyset,\,\{1\},\,\{2\},\,\{1,2\}\bigr\}$, 
and we have the following table: 
\begin{equation*}
\begin{array}{c||c|c|c|c|c}
A & n(A) & {\rm height}(s_{1},A) & {\rm wt}(s_{1},A) & {\rm end}(s_{1},A) & {\rm down}(s_{1},A) \\ \hline\hline
\emptyset & 0 & 0 & s_{1}\lambda & s_{1} & 0 \\ \hline
\{1\} & 0 & 1 & \lambda & e & \alpha_{1}^{\vee} \\ \hline
\{2\} & 1 & 0 & s_{1}\lambda & s_{1}s_{2} & 0 \\ \hline
\{1,2\} & 1 & 1 & \lambda & s_{2} & \alpha_{1}^{\vee}
\end{array}
\end{equation*}
Also, we identify $\overline{{\rm Par}(\lambda)}$ with $\mathbb{Z}_{\ge 0}$. 
Therefore, we obtain 
\begin{equation*}
\begin{split}
 & \OQGl{-w_\circ\lambda} \cdot \OQG{s_{1}t_{\xi}} = \\[3mm]
 & \hspace*{5mm} 
  \sum_{ m \in \mathbb{Z}_{\ge 0} } q^{ -\langle \lambda,\xi \rangle-m}
  \biggl\{%
  \underbrace{
  \mathbf{e}^{s_1\lambda}
  \OQG{ s_1 t_{\xi+m\alpha_{1}^{\vee}} }}_{A=\emptyset} 
  + 
  \underbrace{
  q^{-1} \mathbf{e}^{\lambda}
  \OQG{ t_{\xi+\alpha_{1}^{\vee}+m\alpha_{1}^{\vee}} }}_{A=\{1\}} \\
 & \hspace*{40mm}
   + 
   \underbrace{(-1)\mathbf{e}^{s_{1}\lambda}
   \OQG{ s_1s_2 t_{\xi+m\alpha_{1}^{\vee}} }}_{A=\{2\}}
   +
   \underbrace{ (-1)q^{-1} \mathbf{e}^{\lambda}
   \OQG{ s_2 t_{\xi+\alpha_{1}^{\vee}+m\alpha_{1}^{\vee}} }}_{A=\{1,2\}} \biggr\}\,. 
\end{split}
\end{equation*}}
\end{exa}

As an immediate consequence of Theorem~\ref{genchev}, we obtain the semi-infinite analog of the duality formulas \cite[Theorems~8.6 and 8.7]{LP}, which hold in $K_{T}(\QQ)$ (not in $K_{T \times \mathbb{C}^*}(\QQ)$).
For $\zeta \in Q^{\vee,+}$, we define the following $\mathbb{Z}[P]$-linear operator (acting on the right) on $K_{T}(\QQ)$:
\[\OQG{x}\cdot t_{\zeta}:=\OQG{xt_\zeta}, \quad x \in \Waf;\]
we also consider an arbitrary (possibly, infinite) sum, with coefficients in $\mathbb{Z}[P]$, of 
the operators $t_{\zeta}$, $\zeta \in Q^{\vee,+}$, which is a well-defined operator on $K_{T}(\QQ)$.
Now, for an arbitrary $\lambda \in P$, we introduce the following operator on $K_{T}(\QQ)$:
\[c_{w}^{v}(\lambda):=\sum_{\substack{A\in\CA(w,\Gamma(\lambda)) \\ \ed(w,A)=v}}(-1)^{n(A)} \be^{\wt(w,A)} t_{\dn(w,A)}
  \quad \text{for $v,w \in W$}.\]
Then, we can express the general Chevalley formula for $q=1$, that is, the general Chevalley formula for $K_{T}(\QQ)$, as:
\[\OQGl{-w_\circ\lambda}\cdot\OQG{x}=\sum_{v\in W}\OQG{v}
\left(c_{w}^v(\lambda)\sum_{{\bm{\chi}}\in\overline{{\rm Par}(\lambda)}}t_{\xi+\iota(\bchi)}\right)\]
for an arbitrary $\lambda \in P$ and $x = wt_{\xi} \in \Wafp$.

\begin{cor} \label{cor:duality}
Let $\lambda \in P$. For $v,w \in W$, we have the following equalities for the operators $c_{w}^{v}(\lambda)${\rm :} 
\begin{align}
&c_w^v(\lambda)=
(-1)^{\ell(w,v)}\theta(c^{ww_\circ}_{vw_\circ}(w_\circ\lambda)), \label{eq:d1} \\
&c_w^v(\lambda)=
(-1)^{\ell(w,v)}\eta(c^{w_\circ w}_{w_\circ v}(-\lambda)), \label{eq:d2}
\end{align}
where $\ell(w,v)$ denotes the length of a shortest directed path from $w$ to $v$ in $\mathrm{QB}(W)$, 
while $\eta: \be^{\mu} \mapsto \be^{-w_\circ\mu}$ and $\theta: t_{\zeta} \mapsto t_{-w_\circ\zeta}$ for $\mu \in P$ and $\zeta \in Q^{\vee,+}$. 
\end{cor}

\begin{proof}
Equalities \eqref{eq:d1} and \eqref{eq:d2} can be proved by arguments similar to those in the proofs of 
\cite[Theorems~8.6 and 8.7]{LP}, respectively;
in addition, we make use of the following facts about $\mathrm{QB}(W)$:
\begin{itemize}
\item the maps $w\mapsto ww_\circ$ and $w\mapsto w_\circ w$ are anti-automorphisms of $\mathrm{QB}(W)$;
\item the lengths of all paths from $w$ to $v$ in $\mathrm{QB}(W)$ have the same parity.
\end{itemize}
For the latter fact, we refer to \cite[Section 6]{BFP}. More precisely, by using the argument in the last paragraph of the proof of \cite[Theorem~6.4]{BFP}, including the related setup, we can show that any path from $w$ to $v$ in $\mathrm{QB}(W)$ can be transformed into the unique label-increasing one by using  \cite[Lemma~6.7]{BFP}, combined with removing loops of length 2. The needed fact immediately follows.
\end{proof}

\begin{rema}
{\rm By combining equations \eqref{eq:d1} and \eqref{eq:d2}, 
we obtain 
\[c_{w}^{v}(-w_{\circ} \lambda) = \theta \eta(c_{w_{\circ} w w_{\circ}}^{w_{\circ} v w_{\circ}}(\lambda)),\]
which is the semi-infinite analog of \cite[Corollary~8.8]{LP}.
This equality can also be explained (as in the geometric proof of \cite[Proposition~8.9]{LP}) by using the Dynkin diagram automorphism induced by ``$-w_{\circ}$''; 
see \cite[Remark A.4]{KNOS}.}
\end{rema}

\section{Proof of Theorem~{\rm \ref{genchev}}} \label{secproof}

\subsection{Quantum Bruhat operators at ``$q=1$''} 
Let $\vartheta^{\vee}$ be the highest coroot for the set $\Phi^{\vee}$ $:= \{ \alpha^{\vee} \mid \alpha \in \Phi \}$ of roots of $\mathfrak{g}^{\vee}$; the element $\vartheta^{\vee}$ should not be confused with the coroot $\theta^{\vee}$ of the highest root $\theta$ for the set $\Phi$ of roots of $\mathfrak{g}$. 
Let $h := \pair{\rho}{\vartheta^{\vee}} + 1$ denote the Coxeter number of $\mathfrak{g}^{\vee}$, and consider the group algebra $\mathbb{Z}[P/h] \supset \mathbb{Z}[P]$. We set 
$\tilde{K}_{T}(\QQ) := K_{T}(\QQ) \otimes_{\mathbb{Z}[P]} \mathbb{Z}[P/h]$; 
recall that the $T$-equivariant $K$-group $K_{T}(\QQ)$ consists of 
all (possibly infinite) linear combinations of the classes $\OQG{x}$, 
$x \in \Wafp = W \times Q^{\vee,+}$, with coefficients in $\mathbb{Z}[P]$. 
For a positive root $\beta \in \Phi^{+}$, we define a $\mathbb{Z}[P/h]$-linear operator $\SQ_{\beta}$ on $\tilde{K}_{T}(\QQ)$ by:
%
\begin{equation} \label{eq:qbo1}
\SQ_{\beta}\OQG{w t_{\xi}} : = 
\begin{cases}
  \OQG{ ws_{\beta} t_{\xi} }
  & \text{if $w \xrightarrow{\beta} ws_{\beta}$ is a Bruhat edge in $\QB(W)$}\,, \\[1mm]
  \OQG{ ws_{\beta} t_{\xi+\beta^{\vee}} }
  & \text{if $w \xrightarrow{\beta} ws_{\beta}$ is a quantum edge in $\QB(W)$}\,, \\[1mm]
  0 & \text{otherwise}\,,
\end{cases}
\end{equation}
where $w \in W$ and $\xi \in Q^{\vee,+}$. 
Also, we set $\SQ_{-\beta}:=-\SQ_{\beta}$ for $\beta \in \Phi^{+}$. 
For a weight $\nu \in P$, we define
\begin{equation}
\SX^{\nu} \OQG{ w t_{\xi} } = 
  \be^{w \nu / h} \OQG{ w t_{\xi} }\,, 
\end{equation}
where $w \in W$ and $\xi \in Q^{\vee,+}$. 
For $i \in I$, we define
\begin{equation}
\st_{i} \OQG{ x  } = \OQG{x t_{ \alpha_{i}^{\vee}} } \qquad \text{for $x \in \Wafp$}\,. 
\end{equation}
The following lemma is easily shown; cf. \cite[Equations (10.3)--(10.5)]{LP}. 
%
%
\begin{lem} \mbox{} \label{lem:QXt1}
\begin{enumerate}
\item We have $\SQ_{\pm \beta}^{2}=0$ for $\beta \in \Phi^{+} \setminus \Pi$, 
where $\Pi = \{ \alpha_i \}_{i \in I}$ is the set of simple roots. 
For $i \in I$, we have $\SQ_{\pm \alpha_{i}}^{2} = \st_{i}$, 
$\SQ_{\alpha_{i}}\SQ_{-\alpha_{i}} = \SQ_{-\alpha_{i}}\SQ_{\alpha_{i}} = - \st_{i}$, and 
\begin{equation*}
(\SX^{\alpha_{i}}+\SQ_{\alpha_{i}})
(\SX^{-\alpha_{i}}+\SQ_{- \alpha_{i}}) = 
(\SX^{-\alpha_{i}}+\SQ_{- \alpha_{i}})
(\SX^{\alpha_{i}}+\SQ_{\alpha_{i}})=1-\st_{i}\,. 
\end{equation*}

\item We have $\SX^{\nu}\SX^{\mu}=\SX^{\mu+\nu}$ for $\mu,\nu \in P$. 

\item We have $\SQ_{\beta} \SX^{\nu} = \SX^{s_{\beta}\nu} \SQ_{\beta}$ 
for $\nu \in P$ and $\beta \in \Phi$. 
\end{enumerate}
\end{lem}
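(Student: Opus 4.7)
The plan is to reduce each assertion to a direct calculation on a basis element $\OQG{wt_\xi}$, where every operator involved acts by an explicit rule. Parts (2) and (3) are essentially instantaneous: for (2), writing out $\SX^\nu\SX^\mu\OQG{wt_\xi}$ yields the scalar $\be^{w\nu/h}\be^{w\mu/h} = \be^{w(\mu+\nu)/h}$; for (3), the shift $w\mapsto ws_\beta$ produced by $\SQ_\beta$ exactly offsets the change from $\nu$ to $s_\beta\nu$ inside the exponent, because $\be^{ws_\beta(s_\beta\nu)/h}=\be^{w\nu/h}$. The identity for $-\beta$ then follows from $\SQ_{-\beta}=-\SQ_\beta$ and $s_{-\beta}=s_\beta$.

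For the first part of (1), I would compute $\SQ_\beta^2\OQG{wt_\xi}$ by cases on the types (Bruhat or quantum) of the two edges $w\xrightarrow{\beta}ws_\beta$ and $ws_\beta\xrightarrow{\beta}w$. Two Bruhat edges are length-incompatible; a Bruhat/quantum combination forces $\pair{\rho}{\beta^\vee}=1$; two quantum edges force $\pair{\rho}{\beta^\vee}=1/2$. Using the standard fact that $\pair{\rho}{\beta^\vee}$ equals the height of the positive coroot $\beta^\vee$, and is therefore $\ge 2$ whenever $\beta\notin\Pi$, only the mixed case survives, and only at simple roots. Tracking the single $\beta^\vee$-shift contributed by the unique quantum edge then gives $\SQ_{\alpha_i}^2=\st_i$ by a direct computation. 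The remaining relations $\SQ_{-\alpha_i}^2=\st_i$ and $\SQ_{\pm\alpha_i}\SQ_{\mp\alpha_i}=-\st_i$ are immediate from $\SQ_{-\alpha_i}=-\SQ_{\alpha_i}$.

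For the final displayed identity, I would expand the product and collect using the pieces already in hand. By (2), $\SX^{\alpha_i}\SX^{-\alpha_i}=1$; by the previous step, $\SQ_{\alpha_i}\SQ_{-\alpha_i}=-\st_i$; and by (3), $\SQ_{\alpha_i}\SX^{-\alpha_i}=\SX^{s_i(-\alpha_i)}\SQ_{\alpha_i}=\SX^{\alpha_i}\SQ_{\alpha_i}$, which cancels the term $\SX^{\alpha_i}\SQ_{-\alpha_i}=-\SX^{\alpha_i}\SQ_{\alpha_i}$. Hence the sum collapses to $1-\st_i$, and the reversed factorization is handled by the symmetric computation (swap the roles of $\alpha_i$ and $-\alpha_i$ throughout).

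The only substantive step is the case analysis in part (1); everything else is bookkeeping organized by parts (2) and (3). The sole external input beyond the definitions is the height inequality $\pair{\rho}{\beta^\vee}\ge 2$ for non-simple positive $\beta$, which is standard, so I do not anticipate any genuine obstacle.
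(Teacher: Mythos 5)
Your proposal is correct and supplies exactly the direct verification that the paper leaves to the reader (it cites [LP, Eqs.\ (10.3)--(10.5)] and says the lemma is ``easily shown''). In particular, your case analysis for part (1) — two Bruhat edges are length-incompatible, one Bruhat plus one quantum edge forces $\pair{\rho}{\beta^\vee}=1$, and two quantum edges force the impossible $\pair{\rho}{\beta^\vee}=1/2$ — together with the observation that $\pair{\rho}{\beta^\vee}$ is the height of $\beta^\vee$ and hence $\ge 2$ for non-simple $\beta$, is the intended argument, and the rest is the routine bookkeeping you describe.
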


We set
\begin{equation} \label{eq:sr1}
\SR_{\beta}:=\SX^{\rho} (\SX^{\beta}+\SQ_{\beta}) \SX^{-\rho} \qquad 
\text{for $\beta \in \Phi$}\,.
\end{equation}
%
%
\begin{prop} \label{prop:YBq1}
The family $\bigl\{ \SR_{\beta} \mid \beta \in \Phi \bigr\}$ satisfies 
the {\em Yang-Baxter equation}.  Namely, if $\alpha,\beta \in \Phi$ satisfy 
$\pair{\alpha}{\beta^{\vee}} \le 0$, or equivalently, 
$\pair{\beta}{\alpha^{\vee}} \le 0$, then 
\begin{equation} \label{eq:YB}
\SR_{\alpha}\SR_{s_{\alpha} \beta}
\SR_{s_{\alpha}s_{\beta} \alpha} \cdots 
\SR_{s_{\beta} \alpha}\SR_{\beta} = 
\SR_{\beta}\SR_{s_{\beta} \alpha} \cdots 
\SR_{s_{\alpha}s_{\beta} \alpha}
\SR_{s_{\alpha} \beta}\SR_{\alpha}\,. 
\end{equation}
\end{prop}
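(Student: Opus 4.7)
The plan is to prove Proposition~\ref{prop:YBq1} by reduction to rank~$2$, then by case analysis on the four possible rank-$2$ root systems. This follows the strategy used for the classical (non-quantum) analogue in~\cite[Section~10]{LP}.

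First I would carry out the rank-$2$ reduction. The operators $\SR_\gamma$ appearing on both sides of \eqref{eq:YB} are indexed by roots $\gamma$ in the rank-$2$ subsystem $\Phi' := \Phi \cap (\mathbb{R}\alpha + \mathbb{R}\beta)$; set $W' := \langle s_\alpha, s_\beta\rangle$. From \eqref{eq:qbo1}, $\SQ_\gamma$ applied to a basis element $\OQG{w t_\xi}$ either vanishes or yields a single basis element indexed by an element of $(wW') \times \{t_\xi, t_{\xi+\gamma^\vee}\}$, and $\SX^{\pm\rho}$ acts by a scalar. Hence both sides of \eqref{eq:YB}, when applied to $\OQG{w t_\xi}$, are supported on classes indexed by $(wW') \times \Wafp$, and depend only on the restriction of $\QB(W)$ to the coset $wW'$; this restriction is isomorphic to $\QB(W')$ for the root system $\Phi'$. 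Thus the problem reduces to the case $\Phi = \Phi'$ of rank~$2$.

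In the orthogonal case $\langle \alpha, \beta^\vee \rangle = 0$, \eqref{eq:YB} collapses to the commutativity $\SR_\alpha \SR_\beta = \SR_\beta \SR_\alpha$. Using $s_\alpha(\beta) = \beta$ together with Lemma~\ref{lem:QXt1}~(3) gives $\SQ_\beta \SX^\alpha = \SX^\alpha \SQ_\beta$, and a direct check on basis elements via \eqref{eq:qbo1} gives $\SQ_\alpha \SQ_\beta = \SQ_\beta \SQ_\alpha$. Expanding via \eqref{eq:sr1} and collapsing the interior factor $\SX^{-\rho}\SX^\rho$ to the identity settles this case. For each of $A_2$, $B_2$, and $G_2$, I would apply both sides of \eqref{eq:YB} to a basis element $\OQG{w t_\xi}$ and expand completely. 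Each term on either side arises from choosing, in each factor $\SR_{\gamma_k} = \SX^\rho(\SX^{\gamma_k} + \SQ_{\gamma_k})\SX^{-\rho}$, one of the two summands; the choices selecting $\SQ_{\gamma_k}$ give rise to the $w$-admissible subsets (Definition~\ref{def:admissible}) for the root sequence labeling that side. Crucially, the two root sequences on the two sides of \eqref{eq:YB} are precisely the two opposite reflection orderings of $\Phi'^+$. One then matches the expansions term by term, yielding \eqref{eq:YB}.

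The main obstacle is the combinatorial verification in the nontrivial rank-$2$ cases, particularly $G_2$, where each side is a product of six $\SR_\gamma$'s and the expansion has many terms. The key combinatorial input is shellability (Theorem~\ref{thm:shell}) applied to both reflection orderings: for each pair $(u,v) \in W' \times W'$ of endpoints, shellability produces a canonical admissible subset on each side with prescribed $\wend$, and the resulting bijection between admissible subsets for opposite reflection orderings preserves the relevant statistics ($\wend$, $\dn$, and the monomial coefficient in $\be^{\cdot/h}$ coming from the $\SX^{\pm\rho}$-conjugation). This is the same combinatorial mechanism by which the quantum alcove model of \cite{lalgam} is shown to be independent of the choice of reduced $\lambda$-chain.
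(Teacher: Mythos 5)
Your proposal takes a genuinely different route from the paper. The paper outsources the hard combinatorics: it defines $\widetilde{\SR}_\beta := 1+\SQ_\beta$ and invokes \cite[Corollary~4.4]{BFP} to get the Yang--Baxter relation for $\{\widetilde{\SR}_\beta \mid \beta \in \Phi^+\}$, then extends to all of $\Phi$ by a cancellation trick (multiplying both sides by $\widetilde{\SR}_{-\alpha}$, using Lemma~\ref{lem:QXt1}~(1) and the invertibility of $1-\st_i$), and finally passes from $\widetilde{\SR}_\beta$ to $\SR_\beta = \SX^\rho(\SX^\beta+\SQ_\beta)\SX^{-\rho}$ by the commutation relations, following \cite[Lemma~9.2 and Theorem~10.1]{LP}. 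You instead propose to re-derive the combinatorial core directly via rank-$2$ reduction and case analysis, which is essentially the BFP strategy itself rather than a citation of it.

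The main gap is the rank-$2$ reduction, which you present as an observation but which is in fact the heart of the matter. You claim that the restriction of $\QB(W)$ to the coset $wW'$ ``is isomorphic to $\QB(W')$ for the root system $\Phi'$,'' but this does not follow from the definitions. The quantum-edge condition $\ell(wr_\gamma)=\ell(w)+1-2\pair{\rho}{\gamma^\vee}$ involves the ambient length function on $W$ and the ambient $\rho$, and when $\Phi'$ is not a standard parabolic subsystem, $\pair{\rho}{\gamma^\vee}$ for $\gamma \in \Phi'^+$ does not match $\pair{\rho'}{\gamma^\vee}$ for the rank-$2$ $\rho'$, nor is the restriction of $\ell$ to the coset an affine shift of $\ell_{W'}$. (Already in type $A_3$ with $\Phi'$ spanned by $\alpha_1$ and $\alpha_2+\alpha_3$, the ambient $\rho$-heights $(1,2,3)$ disagree with the rank-$2$ heights $(1,1,2)$.) That the two edge structures nevertheless coincide is a nontrivial structure theorem about reflection subgroups and the quantum Bruhat graph — this is precisely what \cite{BFP} establish on the way to their Yang--Baxter result, and it cannot be waved through.

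Two smaller issues. First, your description of the two sides of \eqref{eq:YB} as the two reflection orderings of $\Phi'^+$ is only correct when $\alpha,\beta$ are both positive; for general $\alpha,\beta\in\Phi$ with $\pair{\alpha}{\beta^\vee}\le 0$, the roots appearing are the ``positive system'' generated by $\{\alpha,\beta\}$ inside $\Phi'$, which can include negative roots of $\Phi$ — this is exactly why the paper needs a separate argument (and why $\SR_\alpha\SR_{-\alpha}\ne 1$ for $\alpha$ simple causes trouble). Second, invoking shellability (Theorem~\ref{thm:shell}) gives a canonical increasing/decreasing path to each target element, but \eqref{eq:YB} is an equality of operators, so one must match entire formal sums including the $\be^{w\nu/h}$-factors from $\SX^{\pm\rho}$-conjugation and the signs/degrees; a bijection preserving $\wend$ and $\dn$ alone is not enough, and the monomial bookkeeping is the bulk of the verification, especially in $G_2$ where neither side is short.
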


\begin{proof}
We set $\widetilde{\SR}_{\beta}:=1+\SQ_{\beta}$ for $\beta \in \Phi$. 
It follows from \cite[Corollary~4.4]{BFP} that 
the family $\bigl\{ \widetilde{\SR}_{\beta} \mid \beta \in \Phi^{+} \bigr\}$ 
satisfies the Yang-Baxter equation;
to apply this corollary, in view of the $\mathbb{Z}[P]$-module isomorphism 
from $QK_{T}(G/B) = K_{T}(G/B) \otimes_{\mathbb{Z}[P]} \mathbb{Z}[P][\![Q]\!]$ onto $K_{T}(\QQ)$, explained in Section~\ref{qkgb} below,
we take a field $k$ containing the ring 
$\mathbb{Z}[\![Q^{\vee,+}]\!] = \mathbb{Z}[\![Q_{i} \mid i \in I]\!]$ of 
formal power series in the variables $Q_{i} = Q^{\alpha_{i}^{\vee}}$, $i \in I$, 
and a $k$-valued multiplicative function $E$ on $\Phi^{+}$ given by $E(\alpha_{i}) := Q_{i}$ for each $i \in I$.

In order to prove that 
the family $\bigl\{ \widetilde{\SR}_{\beta} \mid \beta \in \Phi \bigr\}$ also 
satisfies the Yang-Baxter equation, we make use of the following observation. 
Noting that the leftmost operator (say $\widetilde{\SR}_{\alpha}$) 
on the left-hand side of the Yang-Baxter equation is identical to 
the rightmost operator on the right-hand side of the equation,
we multiply both sides of the Yang-Baxter equation by the operator $\widetilde{\SR}_{-\alpha}$ on the left and on the right. 
If $\alpha$ is not a simple root (resp., $\alpha=\alpha_{i}$ for some $i \in I$), 
then the leftmost two operators $\widetilde{\SR}_{-\alpha}\widetilde{\SR}_{\alpha}$ on 
the left-hand side and the rightmost two operators
$\widetilde{\SR}_{\alpha}\widetilde{\SR}_{-\alpha}$ on the right-hand side 
are both identical to $1$ (resp., $1 - \st_{i}$) 
by Lemma~\ref{lem:QXt1}~(1). 
Here we remark that the operator $1 - \st_{i}$ on 
$K_{T}(\mathbf{Q}_{G})$ is invertible, with its inverse $(1 - \st_{i})^{-1} = 
1+\st_{i}+\st_{i}^{2} + \cdots$, and commutes with 
$\widetilde{\SR}_{\gamma}$ for all $\gamma \in \Phi$. 
Hence, in the case that $\alpha=\alpha_{i}$, 
we can remove the operator $1 - \st_{i}$ from both sides of the equation 
by multiplying both sides by the inverse $(1 - \st_{i})^{-1}$. 
With this observation, the same argument as for 
\cite[Lemma 9.2]{LP} shows that 
the family $\bigl\{ \widetilde{\SR}_{\beta} \mid \beta \in \Phi \bigr\}$ also 
satisfies the Yang-Baxter equation. 

Now our assertion can be proved in exactly the same as \cite[Theorem 10.1]{LP}; 
use the commutation relations in Lemma~\ref{lem:QXt1} instead of 
\cite[Equations (10.3)--(10.5)]{LP} in the proof of \cite[Theorem 10.1]{LP}. 
\end{proof}

\begin{rema}
{\rm The Yang-Baxter property, as stated in Proposition~\ref{prop:YBq1}, is 
a weaker version of the similar property in~\cite[Definition~9.1]{LP}. 
Indeed, the additional requirement in the mentioned definition is that $R_{-\alpha}=(R_{\alpha})^{-1}$. 
By Lemma~\ref{lem:QXt1}, this still holds in our case if $\alpha$ is not a simple root, 
whereas $R_{-\alpha}=(1 - \st_{i})(R_\alpha)^{-1}$ when $\alpha=\alpha_{i}$ for some $i \in I$.}
\end{rema}

Let $\lambda \in P$ be an arbitrary weight. Recall that a reduced $\lambda$-chain 
$\Gamma=(\beta_{1},\dots,\beta_{m})$ corresponds to the following reduced alcove path:
\begin{equation} \label{eq:Gamma}
A_{\circ}=A_{0} 
  \xrightarrow{-\beta_{1}} A_{1} 
  \xrightarrow{-\beta_{2}} \cdots 
  \xrightarrow{-\beta_{m}} A_{m}=A_{-\lambda}\ (=A_{\circ}-\lambda)\,.
\end{equation}
%
%
%

%
\begin{rema} \label{rem:chain}
{\rm 
Let $\Gamma$ be a reduced $\lambda$-chain, and 
let $\Gamma'$ be an arbitrary (not necessarily reduced) $\lambda$-chain. 
We deduce from the proof of \cite[Lemma~9.3]{LP} that 
$\Gamma$ can be obtained from $\Gamma'$ by a sequence of 
the following two procedures {\rm (YB)} and {\rm (D)}: 
\begin{enumerate}
\item[(YB)] for $\alpha,\beta \in \Phi$ 
such that $\pair{\alpha}{\beta^{\vee}} \le 0$, or equivalently, 
$\pair{\beta}{\alpha^{\vee}} \le 0$, one replaces a segment 
of the form 
$\alpha,\,s_{\alpha} \beta,\,
s_{\alpha}s_{\beta} \alpha,\,\dots,\,
s_{\beta} \alpha,\,\beta$ by
$\beta,\,s_{\beta} \alpha,\,\dots,\,
s_{\alpha}s_{\beta} \alpha,\,
s_{\alpha} \beta,\,\alpha$\,; 

\item[(D)] one deletes a segment of the form $\beta$, $-\beta$ for $\beta \in \Phi$. 
\end{enumerate}}
\end{rema}

\subsection{Quantum Bruhat operators for generic $q$}

For simplicity of notation, 
we write $[\mathcal{O}_{\mathbf{Q}_{G}(x)}]$ as $[x]$ 
for $x \in W_{\af}^{\ge 0} = W \times Q^{\vee,+}$.
For $\beta \in \Phi$ and $k \in \mathbb{Z}$, we define 
a $\mathbb{Z}[q,q^{-1}][P/h]$-linear operator $\SQ_{\beta,k}$ on 
$\tilde{K}_{T \times \mathbb{C}^*}(\QQ) := 
K_{T \times \mathbb{C}^*}(\QQ) \otimes_{\mathbb{Z}[q,q^{-1}][P]} 
\mathbb{Z}[q,q^{-1}][P/h]$ as follows; recall that, by definition,
the ($T \times {\mathbb C}^*$)-equivariant 
$K$-group $K_{T \times {\mathbb C}^*}(\QQ)$ consists of 
all infinite linear combinations of the classes $[x]$, 
$x \in \Wafp$, with coefficients $a_{x} \in \mathbb{Z}[q,q^{-1}][P]$ such that 
the sum $\sum_{x \in \Wafp} \vert a_{x} \vert$ of the absolute values $\vert a_{x} \vert$ 
lies in $\mathbb{Z}_{\geq 0}[P](\!(q^{-1})\!)$:
\begin{equation}
\SQ_{\beta,k}[u t_{\xi}] = 
\begin{cases}
  \sgn(\beta) [ us_{\beta} t_{\xi} ]
  & \text{if $u \edge{|\beta|} us_{\beta}$ is a Bruhat edge in $\QB(W)$}, \\[2mm]
  \sgn(\beta) q^{-\sgn(\beta) k}[ us_{\beta} t_{\xi+|\beta|^{\vee}} ]
  & \text{if $u \edge{|\beta|} us_{\beta}$ is a quantum edge in $\QB(W)$}, \\[2mm]
  0 & \text{otherwise}, 
\end{cases}
\end{equation}
where $u \in W$ and $\xi \in Q^{\vee,+}$.
For a weight $\nu \in P$, we define
\begin{equation}
\SX^{\nu} [u t_{\xi}] = \be^{u \nu / h}[u t_{\xi}], 
\end{equation}
where $u \in W$ and $\xi \in Q^{\vee,+}$.
The following lemma is shown in the same way as Lemma~\ref{lem:QXt1}. 
%
%
\begin{lem} \mbox{} \label{lem:QXt}
\begin{enu}
\item We have $\SQ_{\beta,k}\SQ_{\pm \beta,l}=0$ 
for $\beta \in \Phi \setminus (\Pi \cup (-\Pi))$ and $k,l \in \mathbb{Z}$, 
where $\Pi=\{ \alpha_{i} \}_{i \in I}$ is the set of simple roots. 
%

\item We have $\SX^{\nu}\SX^{\mu}=\SX^{\mu+\nu}$ for $\mu,\nu \in P$. 

\item We have $\SQ_{\beta,k} \SX^{\nu} = \SX^{s_{\beta}\nu} \SQ_{\beta,k}$ 
for $\nu \in P$, $\beta \in \Phi$, and $k \in \mathbb{Z}$. 
\end{enu}
\end{lem}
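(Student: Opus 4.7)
The plan is to verify each of the three identities by a direct computation on the topological basis $\{[ut_\xi]\}_{u\in W,\,\xi\in Q^{\vee,+}}$, in close analogy with the proof of Lemma~\ref{lem:QXt1}. The only new feature is the $q$-weight introduced by $\SQ_{\beta,k}$, namely a monomial $\pm q^{\mp\sgn(\beta)k}$, which commutes with the scalar $\be^{\bullet/h}\in\mathbb{Z}[q,q^{-1}][P/h]$ produced by $\SX^\nu$; consequently, no computation becomes essentially harder than in the undeformed case.

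Part~(2) I would dispatch immediately: $\SX^\nu$ is diagonal in the basis $\{[ut_\xi]\}$ with eigenvalue $\be^{u\nu/h}$, and since the exponents add, both $\SX^\nu\SX^\mu$ and $\SX^{\mu+\nu}$ act as multiplication by $\be^{u(\mu+\nu)/h}$ on every basis element. For part~(3), I would evaluate both sides on a fixed $[ut_\xi]$. Writing $\SQ_{\beta,k}[ut_\xi]=c\,[us_\beta t_{\xi'}]$ for the appropriate coefficient $c$ and (possibly shifted) coroot $\xi'$, the left-hand side acquires the extra scalar $\be^{u\nu/h}$, while on the right the operator $\SX^{s_\beta\nu}$ contributes $\be^{(us_\beta)(s_\beta\nu)/h}=\be^{u\nu/h}$. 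The two prefactors agree, and the vanishing case is trivial.

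Part~(1) is the step with genuine content, and it is where the hypothesis $\beta\notin\Pi\cup(-\Pi)$ is used. The key observation is that $s_\beta=s_{-\beta}$, so $\SQ_{\pm\beta,l}\SQ_{\beta,k}[ut_\xi]$ is supported on $[ut_{\xi''}]$ for some $\xi''$, and is nonzero only when both the edge $u\edge{|\beta|}us_\beta$ and the reverse edge $us_\beta\edge{|\beta|}u$ exist in $\QB(W)$. A direct comparison of the Bruhat/quantum length constraints for these two reverse edges shows that, in every combination (Bruhat/Bruhat yields a direct contradiction, Bruhat/quantum and quantum/Bruhat yield $\pair{\rho}{|\beta|^\vee}=1$, and quantum/quantum yields the impossible $\pair{\rho}{|\beta|^\vee}=1/2$), one is forced into $|\beta|\in\Pi$, contradicting the hypothesis. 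Hence the composite vanishes on every basis element.

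I expect no serious obstacle; the only point that needs to be tracked with care is exactly this length-comparison in part~(1), which is precisely what distinguishes the present statement from part~(1) of Lemma~\ref{lem:QXt1}, where $\SQ_{\pm\alpha_i}^2=\st_i\ne 0$ is permitted (and indeed used) in the simple-root case.
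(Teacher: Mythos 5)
Your proposal is correct and follows exactly the route the paper implies: the paper's own ``proof'' is just the one-line remark that the statement ``is shown in the same way as Lemma~\ref{lem:QXt1},'' i.e., by a direct basis computation, and your verification of parts~(2),~(3) and the four-way length-comparison in part~(1) (using that $\pair{\rho}{|\beta|^\vee}$ is a positive integer equal to $1$ precisely when $|\beta|\in\Pi$) fills in exactly what that remark elides. The only blemish is a harmless notational slip in part~(1): you wrote $\SQ_{\pm\beta,l}\SQ_{\beta,k}$ where the lemma has $\SQ_{\beta,k}\SQ_{\pm\beta,l}$, but since your argument shows the composite vanishes unless both $u\edge{|\beta|}us_\beta$ and $us_\beta\edge{|\beta|}u$ are edges of $\QB(W)$ --- a symmetric condition --- the conclusion is unaffected.
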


We set
\begin{equation}
\SR_{\beta,k}:=\SX^{\rho} (\SX^{\beta}+\SQ_{\beta,k}) \SX^{-\rho} \qquad 
\text{for $\beta \in \Phi$ and $k \in \mathbb{Z}$}.
\end{equation}
Let
\begin{equation} \label{eq:Xi}
\Xi:A_{0} \edge{-\beta_{1}} A_{1} \edge{-\beta_{2}} \cdots \edge{-\beta_{m}} A_{m}
\end{equation}
be a sequence of adjacent alcoves (note that $A_{0}$ is not necessarily identical to $A_{\circ}$). 
For an arbitrary sequence of integers $\bk=(k_{1},k_{2},\dots,k_{m})$, 
we set
\begin{equation}
\SR_{\Xi,\bk} 
 := \SR_{\beta_{m},k_{m}}\SR_{\beta_{m-1},k_{m-1}} \cdots \SR_{\beta_{2},k_{2}}\SR_{\beta_{1},k_{1}}. 
\end{equation}
By the same argument as for \cite[Proposition~14.5]{LP}, 
we can prove the following proposition; 
notice that in the proof of \cite[Proposition~14.5]{LP}, 
they use only the commutation relations 
corresponding to those in Lemma~\ref{lem:QXt}(2),(3), together with
some facts about central points of alcoves 
\cite[Lemmas~14.1 and 14.2]{LP}.
%
%
\begin{prop} \label{prop:RXik}
Keep the notation and setting above. 
Then, for $u \in W$ and $\xi \in Q^{\vee,+}$, 
\begin{equation}
\SR_{\Xi,\bk} [ut_{\xi}] = 
\sum_{ A = \{j_{1},\dots,j_{s}\} } 
\be^{ -u\mu_{A} }
\SQ_{\beta_{j_{s}},k_{j_{s}}} \cdots \SQ_{\beta_{j_{2}},k_{j_{2}}}\SQ_{\beta_{j_{1}},k_{j_{1}}} [ut_{\xi}], 
\end{equation}
where $A = \{j_{1},\dots,j_{s}\}$ runs over all subsets 
of $[m]:=\{1,2,\dots,m\}$, and $\mu_{A} \in P$ is a weight 
depending only on $A$. In particular, if $A_{0}=A_{\circ}$ and 
$A_{m}=A_{-\lambda}$ for a weight $\lambda \in P$, 
then $\mu_{A} = \ha{r}_{j_{1}}\ha{r}_{j_{2}} \cdots \ha{r}_{j_{s}}(-\lambda)$ 
for $A = \{j_{1},\dots,j_{s}\} \subseteq [m]$. 
\end{prop}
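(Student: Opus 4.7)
The plan is to closely mimic the proof of \cite[Proposition~14.5]{LP}, since the only ingredients used there are (i) the commutation relations between the $\SX$-operators and the analogues of $\SQ_{\beta,k}$, and (ii) the combinatorics of the central points of alcoves, both of which are now available in the quantum setting thanks to Lemma~\ref{lem:QXt} and the definition of $\SR_{\beta,k}$.

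First, I expand $\SR_{\Xi,\bk}=\SR_{\beta_m,k_m}\cdots\SR_{\beta_1,k_1}$ by writing each factor as
\[
\SR_{\beta_i,k_i} = \SX^{\rho}\SX^{\beta_i}\SX^{-\rho} + \SX^{\rho}\SQ_{\beta_i,k_i}\SX^{-\rho}
\]
and distributing. This produces $2^m$ terms naturally indexed by subsets $A\subseteq [m]$, where $i\in A$ precisely when the $\SQ$-summand was selected at position~$i$. In the resulting word, the operators $\SQ_{\beta_{j_s},k_{j_s}},\dots,\SQ_{\beta_{j_1},k_{j_1}}$ appear in the prescribed right-to-left order with all other positions contributing factors of the form $\SX^{\pm\rho}$ or $\SX^{\beta_i}$.

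Next, using Lemma~\ref{lem:QXt}(2),(3), I move all $\SX$-factors in such a term to the left of the product $\SQ_{\beta_{j_s},k_{j_s}}\cdots\SQ_{\beta_{j_1},k_{j_1}}$. The key point is that the commutation relation $\SQ_{\beta,k}\SX^{\nu} = \SX^{s_{\beta}\nu}\SQ_{\beta,k}$ is independent of $k$, so the accumulated $\SX$-prefactor $\SX^{\nu_A}$ depends only on $A$ and on the underlying sequence $\beta_1,\dots,\beta_m$ (equivalently, on $\Xi$), and not on $\bk$. Applying $\SX^{\nu_A}$ to $[ut_{\xi}]$ produces the scalar $\be^{u\nu_A/h}$, which I then rewrite in the form $\be^{-u\mu_A}$ to extract the weight $\mu_A$; this proves the general formula asserted by the proposition.

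For the special case $A_0=A_{\circ}$, $A_m=A_{-\lambda}$, I have to identify $\mu_A$ with $\widehat{r}_{j_1}\widehat{r}_{j_2}\cdots\widehat{r}_{j_s}(-\lambda)$. Here I invoke the central-point arguments \cite[Lemmas~14.1 and 14.2]{LP}: they translate the algebraic accumulation of $\SX^{\pm\rho}$-conjugations along the alcove path $\Xi$ into the geometric action of successive affine reflections $\widehat{r}_{j_i}=s_{\beta_{j_i},-l_{j_i}}$ on the central point, hence on $-\lambda$. This matching is the step I expect to be the main obstacle, since it requires a careful bookkeeping of the $\rho$-shifts coming from different factors and of the root-reflections permuting them; however, the argument is purely geometric and depends only on the structure of the alcove path, so it carries over verbatim from \cite{LP} to our $q$-deformed operators.
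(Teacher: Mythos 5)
Your proposal takes essentially the same route as the paper, which simply asserts that the argument of \cite[Proposition~14.5]{LP} carries over because it only uses the commutation relations of Lemma~\ref{lem:QXt}(2),(3) and the central-point facts \cite[Lemmas~14.1 and 14.2]{LP}; you have correctly identified and unpacked those same ingredients, in particular the observation that $\SQ_{\beta,k}\SX^{\nu}=\SX^{s_\beta\nu}\SQ_{\beta,k}$ is $k$-independent, so the accumulated $\SX$-prefactor depends only on $\Xi$ and $A$.
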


\begin{dfn}
Let $\Xi$ be as in \eqref{eq:Xi}, and $u \in W$. 
A subset $A=\bigl\{ j_1 < j_2 < \cdots < j_s \bigr\}$ of $[m]=\{1,\ldots,m\}$ (possibly empty) is 
a $u$-admissible subset (with respect to $\Xi$) 
if there exists a directed path of the form \eqref{eqn:admissible} 
(with $w$ replaced by $u$) in the quantum Bruhat graph $\QB(W)${\rm ;} 
we define $\ed(u,A) \in W$ in the same manner as in \eqref{eqn:admissible}.
%
%
Let $\CA(u,\Xi)$ denote the collection of all $u$-admissible subsets of $[m]$. 
\end{dfn}

Let $\Xi$ be as in \eqref{eq:Xi}, and $u \in W$. 
For $A \in \CA(u,\Xi)$, we define 
$\wt(u,A)$, $A^{-}$, and $\dn(u,A)$ 
(resp., $n(A)$) in exactly the same way as in Section~\ref{sec:qam} 
(resp., Theorem~\ref{genchev}). 
Let $\bk=(k_{1},k_{2},\dots,k_{m})$ be an arbitrary sequence of integers. 
For $A=\bigl\{ j_1 < j_2 < \cdots < j_s \bigr\} \in \CA(u,\Xi)$, we set 
\begin{equation*}
\Ht_{\bk}(u,A):= \sum_{j \in A^{-}} \sgn(\beta_{j})k_{j}. 
\end{equation*}
Then the next corollary follows from Proposition~\ref{prop:RXik}, together with 
the definition of $\SQ_{\beta,k}$, and the definitions of 
$\CA(u,\Xi)$, $n(A)$, $\Ht_{\bk}(u,A)$, $\ed(u,A)$, $\dn(u,A)$, $\wt(u,A)$ above. 

\begin{cor} \label{cor:RXik}
Keep the notation and setting above. 
Then, 
\begin{equation}
\SR_{\Xi,\bk} [ut_{\xi}] = 
\sum_{A \in \CA(u,\Xi)} 
(-1)^{n(A)} q^{-\Ht_{\bk}(u,A)} \be^{ -u\mu_{A} } 
[\ed(u,A) t_{\xi+\dn(u,A)}]; 
\end{equation}
note that if $A_{0}=A_{\circ}$ and 
$A_{m}=A_{-\lambda}$ for some weight $\lambda \in P$ (that is, 
if $\Xi$ is a $\lambda$-chain), then $-u\mu_{A} = \wt(u,A)$. 
\end{cor}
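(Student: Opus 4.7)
The plan is to start from Proposition~\ref{prop:RXik}, which already expresses $\SR_{\Xi,\bk}[ut_\xi]$ as a sum over all subsets $A = \{j_1 < \cdots < j_s\} \subseteq [m]$ of terms of the form $\be^{-u\mu_A}\,\SQ_{\beta_{j_s},k_{j_s}} \cdots \SQ_{\beta_{j_1},k_{j_1}}[ut_\xi]$, and then to evaluate the inner composition explicitly using the definition of the operators $\SQ_{\beta,k}$. By the case analysis in that definition, a single application of $\SQ_{\beta_{j_i},k_{j_i}}$ sends $[vt_\zeta]$ either to $0$ (if there is no edge $v \edge{|\beta_{j_i}|} v s_{\beta_{j_i}}$ in $\QB(W)$) or to a scalar multiple of $[v s_{\beta_{j_i}} t_{\zeta}]$ or $[v s_{\beta_{j_i}} t_{\zeta + |\beta_{j_i}|^\vee}]$, according to whether the edge is Bruhat or quantum. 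Iterating, the composition is nonzero exactly when the resulting sequence $u, u r_{j_1}, \dots, u r_{j_1}\cdots r_{j_s}$ traces a directed path in $\QB(W)$ labeled by $|\beta_{j_1}|,\ldots,|\beta_{j_s}|$; this is precisely the condition $A \in \CA(u,\Xi)$, so the outer sum collapses to one over admissible subsets.

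For each admissible $A$, evaluating the composition is then purely bookkeeping. Each factor contributes a sign $\sgn(\beta_{j_i})$, whose product is $(-1)^{n(A)}$ since $n(A)$ counts the indices $i$ with $\sgn(\beta_{j_i}) = -1$. Each quantum step $j_i \in A^-$ additionally contributes a factor $q^{-\sgn(\beta_{j_i}) k_{j_i}}$ and shifts the translation part by $|\beta_{j_i}|^\vee$. Summing the $q$-exponents over $j_i \in A^-$ yields $-\Ht_{\bk}(u,A)$ by definition, while summing the translation shifts yields $\dn(u,A)$; the terminal Weyl element is $\ed(u,A)$. Assembling these contributions gives
\[
\SQ_{\beta_{j_s},k_{j_s}}\cdots \SQ_{\beta_{j_1},k_{j_1}}[ut_\xi] = (-1)^{n(A)}\,q^{-\Ht_{\bk}(u,A)}\,[\ed(u,A)\,t_{\xi+\dn(u,A)}],
\]
and substituting back into the expansion of Proposition~\ref{prop:RXik} produces the stated identity.

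For the final remark, when $\Xi$ starts at $A_\circ$ and ends at $A_{-\lambda}$, Proposition~\ref{prop:RXik} asserts $\mu_A = \ha{r}_{j_1}\cdots \ha{r}_{j_s}(-\lambda)$, whence $-u\mu_A = -u\,\ha{r}_{j_1}\cdots \ha{r}_{j_s}(-\lambda)$, which matches the definition \eqref{defwta} of $\wt(u,A)$ exactly. There is no substantive obstacle; the corollary is a direct computational consequence of Proposition~\ref{prop:RXik} combined with the explicit definitions of $\SQ_{\beta,k}$ and of the statistics $n(A)$, $\Ht_{\bk}(u,A)$, $\dn(u,A)$, $\ed(u,A)$.
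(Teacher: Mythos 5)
Your proposal is correct and matches the paper's approach exactly: the paper states the corollary follows from Proposition~\ref{prop:RXik} together with the definitions of $\SQ_{\beta,k}$, $\CA(u,\Xi)$, $n(A)$, $\Ht_{\bk}(u,A)$, $\ed(u,A)$, $\dn(u,A)$, and $\wt(u,A)$, and you have simply spelled out that routine unwinding.
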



Now, let $\alpha,\beta \in \Phi$ be such that 
$\pair{\alpha}{\beta^{\vee}} \le 0$, or equivalently, 
$\pair{\beta}{\alpha^{\vee}} \le 0$. Let 
\begin{equation*}
\Xi:A_{0} \edge{-\beta_{1}} A_{1} \edge{-\beta_{2}} \cdots \edge{-\beta_{m}} A_{m}
\end{equation*}
be a sequence of adjacent alcoves (note that $A_{0}$ is not necessarily $A_{\circ}$) with 
\begin{equation*}
\beta_{1} = \alpha, \qquad 
\beta_{2} = s_{\alpha} \beta, \qquad 
\beta_{3} = s_{\alpha}s_{\beta} \alpha, 
\quad \ldots, \quad
\beta_{m-1} = s_{\beta} \alpha, \quad 
\beta_{m} = \beta. 
\end{equation*}
Then we have a sequence of adjacent alcoves of the form: 
\begin{equation*}
\Theta : A_{0} = B_{0} \edge{-\gamma_{1}} B_{1} \edge{-\gamma_{2}} \cdots \edge{-\gamma_{m}} A_{m}=B_{m}, 
\end{equation*}
where 
\begin{equation*}
\gamma_{1} = \beta, \qquad 
\gamma_{2} = s_{\beta} \alpha, 
\quad \ldots, \quad
\gamma_{m-2} = s_{\alpha}s_{\beta} \alpha, \qquad 
\gamma_{m-1} = s_{\alpha} \beta, \qquad 
\gamma_{m} = \alpha. 
\end{equation*}
%
%
\begin{prop} \label{prop:YB}
Let $\Xi$ and $\Theta$ be as above. 
Assume that $\bk=(k_{1},k_{2},\dots,k_{m})$ and 
$\bl=(l_{1},l_{2},\dots,l_{m})$ are sequences of integers satisfying 
the condition that 
\begin{equation} \label{eq:is}
\Biggl(\bigcap_{p=1}^{m} H_{\beta_{p},k_{p}}\Biggr) \cap 
\Biggl(\bigcap_{p=1}^{m} H_{\gamma_{p},l_{p}}\Biggr) \ne \emptyset.
\end{equation}
Then the equality $\SR_{\Xi,\bk} = \SR_{\Theta,\bl}$ holds. 
\end{prop}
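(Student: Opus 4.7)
The plan is to apply Corollary~\ref{cor:RXik} to evaluate both sides of the desired identity on an arbitrary class $[ut_\xi]$, and then to exploit the intersection condition~\eqref{eq:is} to decouple the $q$-exponent from the specific admissible subset, reducing the statement to the $q=1$ Yang--Baxter equation of Proposition~\ref{prop:YBq1}.

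First I would fix a point $c$ in the (non-empty) intersection~\eqref{eq:is}. Since $c$ lies on each affine hyperplane $H_{\beta_p,k_p}$, we have $k_p=\pair{c}{\beta_p^\vee}$ for every $p$, and analogously $l_p=\pair{c}{\gamma_p^\vee}$. Using the identity $\sgn(\beta)\beta^\vee=|\beta|^\vee$, this gives, for every $A\in\CA(u,\Xi)$,
\[
\Ht_{\bk}(u,A)=\sum_{j\in A^-}\sgn(\beta_j)\,k_j=\sum_{j\in A^-}\pair{c}{|\beta_j|^\vee}=\pair{c}{\dn(u,A)}\,,
\]
and the analogous formula $\Ht_{\bl}(u,B)=\pair{c}{\dn(u,B)}$ for $B\in\CA(u,\Theta)$. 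Thus the $q$-exponent in the expansion produced by Corollary~\ref{cor:RXik} depends on the admissible subset only through its ``down'' coroot, which in turn controls the translation part of the basis element $[\ed(u,A)\,t_{\xi+\dn(u,A)}]$ to which $A$ contributes.

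Next I would group the expansion of $\SR_{\Xi,\bk}[ut_\xi]$ by basis elements $\be^{\nu}\,[y\,t_{\xi+\zeta}]$ with $\nu\in P/h$, $y\in W$, $\zeta\in Q^{\vee,+}$. By the previous step, the coefficient of each such element factors as $q^{-\pair{c}{\zeta}}$ times $\sum_A (-1)^{n(A)}$, where $A$ ranges over admissible subsets satisfying $\ed(u,A)=y$, $\dn(u,A)=\zeta$, and $-u\mu_A=\nu$. A direct case-check on Bruhat and quantum edges shows that $\SQ_{\beta,k}\vert_{q=1}=\SQ_\beta$ for every $\beta\in\Phi$ and every $k$ (the factor $\sgn(\beta)$ in the definition of $\SQ_{\beta,k}$ reproduces the convention $\SQ_{-\beta}=-\SQ_\beta$ from~\eqref{eq:qbo1}). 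Hence the inner sum is precisely the coefficient of $\be^{\nu}[yt_{\xi+\zeta}]$ in $(\SR_{\beta_m}\cdots\SR_{\beta_1})[ut_\xi]$, while the same analysis applied to $\SR_{\Theta,\bl}[ut_\xi]$ yields the same overall factor $q^{-\pair{c}{\zeta}}$ multiplied by the coefficient of $\be^{\nu}[yt_{\xi+\zeta}]$ in $(\SR_{\gamma_m}\cdots\SR_{\gamma_1})[ut_\xi]$.

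Finally, the two operators $\SR_{\beta_m}\cdots\SR_{\beta_1}$ and $\SR_{\gamma_m}\cdots\SR_{\gamma_1}$ are precisely the right-hand and left-hand sides of the Yang--Baxter equation~\eqref{eq:YB} associated with the pair $(\alpha,\beta)$, and therefore coincide by Proposition~\ref{prop:YBq1}. Consequently, the coefficients of every basis element agree on the two sides, yielding $\SR_{\Xi,\bk}[ut_\xi]=\SR_{\Theta,\bl}[ut_\xi]$ for all $u$ and $\xi$, and by $\mathbb{Z}[q,q^{-1}][P/h]$-linearity the claimed operator identity follows. The main conceptual point to emphasize is that the entire role of the intersection hypothesis~\eqref{eq:is} is to force the key identity $\Ht_{\bk}(u,A)=\pair{c}{\dn(u,A)}$, which is exactly what is needed to reduce the proposition to the previously-established $q=1$ Yang--Baxter equation; beyond this, the argument is a bookkeeping exercise.
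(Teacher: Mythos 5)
Your proposal is correct and follows essentially the same route as the paper: evaluate on each $[ut_\xi]$, use a point $c$ of the intersection~\eqref{eq:is} to show that the $q$-exponent $\Ht_{\bk}(u,A)=\pair{c}{\dn(u,A)}$ depends on $A$ only through $\dn(u,A)$, and then reduce to the $q=1$ Yang--Baxter equation of Proposition~\ref{prop:YBq1}. The only difference is cosmetic: the paper routes the key computation through Lemma~\ref{lem:YB1} and compares the resulting exponents $C_{v,\zeta}$ and $D_{v,\zeta}$, whereas you inline the identity $\Ht_{\bk}(u,A)=\pair{c}{\dn(u,A)}$ and observe directly that the common power $q^{-\pair{c}{\zeta}}$ factors out on both sides, which makes the exponent comparison automatic.
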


In the proof of Proposition~\ref{prop:YB}, we use the following.
%
%
\begin{lem} \label{lem:YB1}
Keep the notation and setting of Proposition~\ref{prop:YB}. 
Let $A \in \CA(u,\Xi)$. If $B \in \CA(u,\Xi)$ (resp., $B \in \CA(u,\Theta)$) 
satisfies $\dn(u,A)=\dn(u,B)$, then $\Ht_{\bk}(u,A)=\Ht_{\bk^{B}}(u,B)$, 
where $\bk^{B}:=\bk$ (resp., $\bk^{B}:=\bl$). 
\end{lem}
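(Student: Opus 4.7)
The plan is to exploit the intersection hypothesis \eqref{eq:is} to convert the height statistic into a pairing with the down statistic, at which point the conclusion is immediate.

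First, I would pick a point $\mu\in\hR$ lying in the intersection \eqref{eq:is}. By the definition of the affine hyperplanes, this point satisfies $\pair{\mu}{\beta_p^{\vee}}=k_p$ for every $p=1,\dots,m$, and also $\pair{\mu}{\gamma_p^{\vee}}=l_p$ for every $p=1,\dots,m$. Such a $\mu$ exists precisely because \eqref{eq:is} is non-empty; this is the only place the Yang-Baxter hypothesis enters the present lemma, via the fact that the two chains $\Xi$ and $\Theta$ are arranged so that a common intersection point is available.

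Next, for any $A=\{j_1<\cdots<j_s\}\in\CA(u,\Xi)$ I would rewrite
\[
\Ht_{\bk}(u,A)=\sum_{j\in A^{-}}\sgn(\beta_j)\,k_j=\sum_{j\in A^{-}}\sgn(\beta_j)\pair{\mu}{\beta_j^{\vee}}=\biggl\langle \mu,\ \sum_{j\in A^{-}}|\beta_j|^{\vee}\biggr\rangle=\pair{\mu}{\dn(u,A)},
\]
using the elementary identity $\sgn(\beta)\beta^{\vee}=|\beta|^{\vee}$ (valid for both positive and negative roots). Exactly the same rewriting, with $\gamma_p$ and $l_p$ in place of $\beta_p$ and $k_p$, applies to any $B\in\CA(u,\Theta)$ and yields $\Ht_{\bl}(u,B)=\pair{\mu}{\dn(u,B)}$; for $B\in\CA(u,\Xi)$ the first computation gives $\Ht_{\bk}(u,B)=\pair{\mu}{\dn(u,B)}$.

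With these two identities in hand, the equality $\dn(u,A)=\dn(u,B)$ immediately forces $\Ht_{\bk}(u,A)=\Ht_{\bk^{B}}(u,B)$ in both cases, completing the argument. There is no real obstacle: once \eqref{eq:is} is interpreted as providing a common linear functional that computes all the $k_p$ and $l_p$ simultaneously, the height statistic factors through $\dn$ and the lemma follows. The only subtlety worth double-checking is the sign convention $\sgn(\beta)\beta^{\vee}=|\beta|^{\vee}$ and the matching convention that $\SQ_{-\beta}=-\SQ_{\beta}$ (which is already built into the definition of $\Ht_{\bk}$ through the factor $\sgn(\beta_j)$).
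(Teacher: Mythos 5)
Your proof is correct and follows essentially the same route as the paper's: pick $\mu$ in the common intersection \eqref{eq:is}, use $\pair{\mu}{\beta_p^{\vee}}=k_p$ (resp.\ $\pair{\mu}{\gamma_p^{\vee}}=l_p$) together with $\sgn(\beta)\beta^{\vee}=|\beta|^{\vee}$ to express $\Ht_{\bk}(u,A)=\pair{\mu}{\dn(u,A)}$, and conclude from $\dn(u,A)=\dn(u,B)$. The only cosmetic difference is that you make the factorization of the height statistic through $\dn$ explicit, while the paper carries out the substitution in one chain of equalities without isolating that observation.
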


\begin{proof}
If $B \in \CA(u,\Xi)$ (resp., $B \in \CA(u,\Theta)$), then 
we set $\beta_{p}^{B}:=\beta_{p}$ (resp., $\beta_{p}^{B}:=\gamma_{p}$)
for $1 \le p \le m$. We have 
\begin{equation} \label{eq:dn1}
\begin{split}
\sum_{a \in A^{-}} \sgn(\beta_{p})\beta_{p}^{\vee} & = 
\sum_{a \in A^{-}} |\beta_{a}|^{\vee} =  \dn(u,A) = \dn(u,B) \\[2mm]
& = \sum_{b \in B^{-}} |\beta_{b}^{B}|^{\vee} 
  = \sum_{b \in B^{-}} \sgn(\beta_{p}^{B})(\beta_{p}^{B})^{\vee}. 
\end{split}
\end{equation}
Let us take an element $\mu$ in the (non-empty) intersection \eqref{eq:is}. 
Then we have $\pair{\mu}{\beta_{p}^{\vee}} = k_{p}$ for $1 \le p \le m$. 
Also, if we write $\bk^{B}$ as $\bk^{B}=(k_{1}^{B},k_{2}^{B},\dots,k_{m}^{B})$, 
then $\pair{\mu}{(\beta_{p}^{B})^{\vee}} = k_{p}^{B}$ for $1 \le p \le m$. 
Therefore, we see that 
\begin{equation*}
\begin{split}
\Ht_{\bk}(u,A) & = \sum_{a \in A^{-}} \sgn(\beta_{a}) k_{a} 
= \sum_{a \in A^{-}} \sgn(\beta_{a}) \pair{\mu}{\beta_{a}^{\vee}} \\[2mm]
& \stackrel{\eqref{eq:dn1}}{=} 
\sum_{b \in B^{-}} \sgn(\beta_{b}^{B}) \pair{\mu}{(\beta_{b}^{B})^{\vee}} =
\sum_{b \in B^{-}} \sgn(\beta_{b}^{B}) k_{b}^{B} = \Ht_{\bk^{B}}(u,B), 
\end{split}
\end{equation*}
as desired.
\end{proof}

\begin{proof}[Proof of Proposition~{\rm \ref{prop:YB}}]
We show that 
$\SR_{\Xi,\bk}[ut_{\xi}] = \SR_{\Theta,\bl}[ut_{\xi}]$ for each $u \in W$ 
and $\xi \in Q^{\vee,+}$. Fix $u \in W$ and $\xi \in Q^{\vee,+}$ arbitrarily, 
and write $\SR_{\Xi,\bk}[ut_{\xi}]$ and $\SR_{\Theta,\bl}[ut_{\xi}]$ as: 
\begin{equation*}
\SR_{\Xi,\bk}[ut_{\xi}] = \sum_{v \in W,\,\zeta \in Q^{\vee,+}}
a_{v,\zeta}(q)[vt_{\zeta}], \qquad 
\SR_{\Theta,\bl}[ut_{\xi}] = \sum_{v \in W,\,\zeta \in Q^{\vee,+}}
b_{v,\zeta}(q)[vt_{\zeta}], 
\end{equation*}
where $a_{v,\zeta}(q)$ and $b_{v,\zeta}(q)$ are elements of 
$\mathbb{Z}[q,q^{-1}][P]$; it suffices to show that $a_{v,\zeta}(q) = b_{v,\zeta}(q)$ 
for all $v \in W$ and $\zeta \in Q^{\vee,+}$. 
By Corollary~\ref{cor:RXik}, we have for $v \in W$ and $\zeta \in Q^{\vee,+}$, 
\begin{align*}
a_{v,\zeta}(q) & = 
\sum_{ \begin{subarray}{c} 
A \in \CA(u,\Xi) \\
\ed(u,A)=v, \, \xi+\dn(u,A)=\zeta
\end{subarray} } (-1)^{n(A)} q^{-\Ht_{\bk}(u,A)} \be^{ -u\mu_{A} }, \\[2mm]
b_{v,\zeta}(q) & = 
\sum_{ \begin{subarray}{c} 
A \in \CA(u,\Theta) \\
\ed(u,A)=v, \, \xi+\dn(u,A)=\zeta
\end{subarray} } (-1)^{n(A)} q^{-\Ht_{\bl}(u,A)} \be^{ -u\mu_{A} }.
\end{align*}
From Lemma~\ref{lem:YB1}, we see that 
the function $A \mapsto \Ht_{\bk}(u,A)$ is constant 
on the subset $\bigl\{A \in \CA(u,\Xi) \mid \ed(u,A)=v,\,\xi+\dn(u,A)=\zeta\bigr\}$. 
Hence it follows that 
\begin{equation} \label{eq:C}
a_{v,\zeta}(q) = q^{C_{v,\zeta}}
\sum_{ \begin{subarray}{c} 
A \in \CA(u,\Xi) \\
\ed(u,A)=v, \, \xi+\dn(u,A)=\zeta
\end{subarray} } (-1)^{n(A)}  \be^{ -u\mu_{A} } = 
q^{C_{v,\zeta}}a_{v,\zeta}(1)
\end{equation}
for some integer $C_{v,\zeta} \in \mathbb{Z}$. 
Similarly, we deduce that 
\begin{equation} \label{eq:D}
b_{v,\zeta}(q) = q^{D_{v,\zeta}}
\sum_{ \begin{subarray}{c} 
A \in \CA(u,\Theta) \\
\ed(u,A)=v, \, \xi+\dn(u,A)=\zeta
\end{subarray} } (-1)^{n(A)}  \be^{ -u\mu_{A} } = 
q^{D_{v,\zeta}}b_{v,\zeta}(1)
\end{equation}
for some integer $D_{v,\zeta} \in \mathbb{Z}$. 
Here we see from Proposition~\ref{prop:YBq1} that 
$a_{v,\zeta}(1) = b_{v,\zeta}(1)$; 
note that the specialization of the operator $\SQ_{\beta,k}$ at $q=1$ 
is identical to $\SQ_{\beta}$ given by \eqref{eq:qbo1}, 
and hence the specialization of the operator $\SR_{\beta,k}$ at $q=1$ 
is identical to $\SR_{\beta}$ given by \eqref{eq:sr1}. Therefore, we find that 
\begin{equation*}
a_{v,\zeta}(q) = 0 \iff b_{v,\zeta}(q) = 0. 
\end{equation*}
Hence it remains to show that if 
$a_{v,\zeta}(q) \ne 0$, or equivalently, 
if $b_{v,\zeta}(q) \ne 0$, then $C_{v,\zeta}=D_{v,\zeta}$; 
notice that in this case, 
\begin{align*}
& \bigl\{A \in \CA(u,\Xi) \mid \ed(u,A)=v,\,\xi+\dn(u,A)=\zeta\bigr\} \ne \emptyset, \\
& \bigl\{A \in \CA(u,\Theta) \mid \ed(u,A)=v,\,\xi+\dn(u,A)=\zeta\bigr\} \ne \emptyset.
\end{align*}
Also, we deduce from Lemma~\ref{lem:YB1} that if $A \in \CA(u,\Xi)$ and 
$B \in \CA(u,\Theta)$ satisfy $\dn(u,A)=\dn(u,B)$, then 
$\Ht_{\bk}(u,A)=\Ht_{\bl}(u,B)$. From these, we obtain $C_{v,\zeta}=D_{v,\zeta}$. 
This completes the proof of Proposition~\ref{prop:YB}. 
\end{proof}

\subsection{Proof of Theorem~{\rm \ref{genchev}}}
Fix $w \in W$. Let $\lambda \in P$ be an arbitrary weight, and let 
\begin{equation} \label{eq:Gam}
\Gamma: 
A_{\circ}=A_{0} \edge{-\beta_{1}} A_{1} \edge{-\beta_{2}} \cdots \edge{-\beta_{m}} A_{m}=A_{-\lambda}
\end{equation}
be an arbitrary (not necessarily reduced) $\lambda$-chain of roots, 
where $A_{-\lambda} =A_{\circ}-\lambda$. Let $H_{\beta_{i},-l_{i}}$ be 
the common wall of $A_{i-1}$ and $A_{i}$ for $i=1,2,\dots,m$. We set 
\begin{equation} \label{eq:til}
\ti{l}_{i}:=\pair{\lambda}{\beta_{i}^{\vee}}-l_{i}
\end{equation}
for $i=1,2,\dots,m$, and then 
$\ti{\bl}:=(\ti{l}_{1},\ti{l}_{2},\dots,\ti{l}_{m})$; 
note that $\Ht(w,A) = \Ht_{\ti{\bl}}(w,A)$ for $A \in \CA(w,\Gamma)$. If we set 
%
%
\begin{equation} \label{eq:G}
\begin{split}
& \bG_{\Gamma}(w,\xi):= \\[2mm]
& \sum_{\bchi \in \ol{\Par(\lambda)}}\,
  \sum_{A \in \CA(w,\Gamma)} 
  (-1)^{n(A)}q^{-\Ht(w,A)-|\bchi|-\pair{\lambda}{\xi}} 
  \be^{\wt(w,A)} [ \ed(w,A)t_{\xi+\dn(w,A)+\iota(\bchi)} ]
\end{split}
\end{equation}
for $\xi \in Q^{\vee,+}$, 
then we see by Corollary~\ref{cor:RXik} that 
\begin{equation} \label{eq:G2}
\bG_{\Gamma}(w,\xi) = \sum_{\bchi \in \ol{\Par(\lambda)}} 
  q^{-|\bchi|-\pair{\lambda}{\xi}} \SR_{\Gamma,\ti{\bl}}[wt_{\xi+\iota(\bchi)}]. 
\end{equation}

Let $\Gamma$ be as in \eqref{eq:Gam}. 
Let $\alpha,\beta \in \Phi$ be such that 
$\pair{\alpha}{\beta^{\vee}} \le 0$, or equivalently, 
$\pair{\beta}{\alpha^{\vee}} \le 0$. 
Assume that there exist $1 \le u < t \le m$ such that 
\begin{equation} \label{eq:YBa}
\beta_{u}= \alpha,\quad \beta_{u+1}=s_{\alpha} \beta, \quad 
\beta_{u+2} = s_{\alpha}s_{\beta} \alpha, \quad \dots, \quad 
\beta_{t-1} = s_{\beta} \alpha, \quad 
\beta_{t} = \beta; 
\end{equation}
we set 
\begin{equation} \label{eq:XYZ}
X:=\bigl\{1,2,\dots,u-1\bigr\}, \quad 
Y:=\bigl\{u,u,\dots,t-1,t\bigr\}, \quad 
Z:=\bigl\{t+1,t+2,\dots,m\bigr\}.
\end{equation}
Let
\begin{equation} \label{eq:Gam2}
\Gamma': A_{\circ}=B_{0} \edge{-\gamma_{1}} B_{1} \edge{-\gamma_{2}} \cdots
        \edge{-\gamma_{m-1}} B_{m-1} \edge{-\gamma_{m}} B_{m}=A_{-\lambda}
\end{equation}
be the $\lambda$-chain obtained by applying the procedure (YB) in Remark~\ref{rem:chain} to 
\begin{equation*}
(\beta_{u},\beta_{u+1},\dots,\beta_{t-1},\beta_{t})
\end{equation*}
in $\Gamma$; that is, $\gamma_{p}=\beta_{p}$ for all $p \in X \cup Z$, and 
\begin{equation} \label{eq:YB2}
\begin{split}
(\gamma_{u},\gamma_{u+1},\dots,\gamma_{t-1},\gamma_{t}) & = 
(\beta_{t},\beta_{t-1},\dots,\beta_{u+1},\beta_{u}) \\
& = (\beta, s_{\beta} \alpha, \dots, s_{\alpha}s_{\beta} \alpha, s_{\alpha} \beta, \alpha). 
\end{split}
\end{equation}
Let $H_{\gamma_{i},-k_{i}}$ be 
the common wall of $B_{i-1}$ and $B_{i}$ for $i=1,2,\dots,m$. We set 
$\ti{k}_{i}:=\pair{\lambda}{\gamma_{i}^{\vee}}-k_{i}$ 
for $i=1,2,\dots,m$, and then $\ti{\bk}:=(\ti{k}_{1},\ti{k}_{2},\dots,\ti{k}_{m})$; 
note that $\Ht(w,B) = \Ht_{\ti{\bk}}(w,B)$ for $B \in \CA(w,\Gamma')$. 
%
%
\begin{prop} \label{prop:YB2}
Keep the notation and setting above. 
Then, $\SR_{\Gamma,\ti{\bl}} = \SR_{\Gamma',\ti{\bk}}$, and 
$\bG_{\Gamma}(w,\xi)=\bG_{\Gamma'}(w,\xi)$ for all $\xi \in Q^{\vee,+}$. 
\end{prop}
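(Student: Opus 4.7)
The plan is to reduce the operator identity $\SR_{\Gamma,\ti{\bl}} = \SR_{\Gamma',\ti{\bk}}$ to a single application of the local Yang--Baxter relation from Proposition~\ref{prop:YB}, and then to derive $\bG_{\Gamma}(w,\xi) = \bG_{\Gamma'}(w,\xi)$ at once from formula~\eqref{eq:G2}.

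First, I would group the product defining $\SR_{\Gamma,\ti{\bl}}$ (and likewise $\SR_{\Gamma',\ti{\bk}}$) into three consecutive subproducts according to the partition $[m] = X \sqcup Y \sqcup Z$ from~\eqref{eq:XYZ}. Since $\Gamma$ and $\Gamma'$ differ only in positions $Y$, the alcoves $A_{p}$ and $B_{p}$ agree for $p\in\{0,\ldots,u-1\}\cup\{t,\ldots,m\}$; consequently, $\beta_{p}=\gamma_{p}$, $l_{p}=k_{p}$, and $\ti{l}_{p}=\ti{k}_{p}$ for every $p\in X\cup Z$. The outer two subproducts therefore coincide factor by factor, and the whole identity reduces to the middle equality
\[
\SR_{\beta_{t},\ti{l}_{t}}\cdots\SR_{\beta_{u},\ti{l}_{u}}\;=\;\SR_{\gamma_{t},\ti{k}_{t}}\cdots\SR_{\gamma_{u},\ti{k}_{u}}.
\]

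This middle equality is the conclusion of Proposition~\ref{prop:YB} applied to the segment $\Xi: A_{u-1}\to\cdots\to A_{t}$ of $\Gamma$ and the corresponding segment $\Theta: B_{u-1}\to\cdots\to B_{t}$ of $\Gamma'$, whose labels are precisely the two sides of the YB pattern displayed in~\eqref{eq:YBa} and~\eqref{eq:YB2}. What I need to check is the intersection condition~\eqref{eq:is} for the height vectors $(\ti{l}_{u},\ldots,\ti{l}_{t})$ and $(\ti{k}_{u},\ldots,\ti{k}_{t})$. Geometrically, the two sides of the YB move pivot around a common codimension-two face $F$, so that every hyperplane $H_{\beta_{p},-l_{p}}$ and $H_{\gamma_{p},-k_{p}}$ with $p\in Y$ contains $F$. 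Since $\ti{l}_{p} = \pair{\lambda}{\beta_{p}^{\vee}} - l_{p}$, for any $\mu_{0}\in F$ the translated point $\mu := \lambda + \mu_{0}$ satisfies $\pair{\mu}{\beta_{p}^{\vee}} = \ti{l}_{p}$ and $\pair{\mu}{\gamma_{p}^{\vee}} = \ti{k}_{p}$ for all $p\in Y$, witnessing that the required intersection is non-empty. Proposition~\ref{prop:YB} then yields $\SR_{\Gamma,\ti{\bl}} = \SR_{\Gamma',\ti{\bk}}$.

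The second assertion now follows immediately from~\eqref{eq:G2}: both $\bG_{\Gamma}(w,\xi)$ and $\bG_{\Gamma'}(w,\xi)$ are equal to $\sum_{\bchi \in \ol{\Par(\lambda)}} q^{-|\bchi|-\pair{\lambda}{\xi}}\,\SR_{\Gamma,\ti{\bl}}[wt_{\xi+\iota(\bchi)}]$, where the only $\Gamma$-dependence sits in the operator, which is now known to be invariant under the YB move. I expect the main obstacle to be the careful verification of the intersection condition~\eqref{eq:is}: one must recognise that the \emph{complementary} heights $\ti{l}_{p}$ and $\ti{k}_{p}$, rather than $l_{p}$ and $k_{p}$, are what govern Proposition~\ref{prop:YB}, and consequently that the common point of the hyperplane intersection is not a point of $F$ itself but its translate by $\lambda$. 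Once this is in place, the rest of the argument is essentially a bookkeeping exercise.
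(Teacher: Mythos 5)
Your proof is correct and follows essentially the same route as the paper's: reduce to the middle segment $Y$, invoke Proposition~\ref{prop:YB} on that segment, and verify the intersection condition~\eqref{eq:is} by translating a point of the common codimension-two face by $\lambda$ (the paper instead uses $-\lambda-\mu$, which is equivalent up to the central symmetry $\nu\mapsto-\nu$ of the hyperplane arrangement). The only thing you argue heuristically that the paper makes precise is the existence of the common codimension-two face $F$, for which the paper cites the last sentence of \cite[Lemma~5.3]{LP} (the two hyperplane subsequences are reverses of each other) together with \cite[Lemma~3.5]{lalurc} (they have a common point).
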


\begin{proof}
We see from (the last sentence of) \cite[Lemma 5.3]{LP} that 
the sequences of hyperplanes $H_{\beta_{i},-l_{i}}$, $i=1,2,\dots,m$, 
and $H_{\gamma_{i},-k_{i}}$, $i=1,2,\dots,m$, coincide, except that 
the segments corresponding to $i=u,u+1,\dots,t-1,t$ are reversed. 
It follows from \cite[Lemma~3.5]{lalurc} that 
\begin{equation*}
\Biggl(\bigcap_{p=u}^{t} H_{\beta_{p},-l_{p}}\Biggr) \cap 
\Biggl(\bigcap_{p=u}^{t} H_{\gamma_{p},-k_{p}}\Biggr) \ne \emptyset.
\end{equation*}
If $\mu$ is an element of this (non-empty) intersection, then $-\lambda-\mu$ is 
an element of the intersection
\begin{equation*}
\Biggl(\bigcap_{p=u}^{t} H_{\beta_{p},-\ti{l}_{p}}\Biggr) \cap 
\Biggl(\bigcap_{p=u}^{t} H_{\gamma_{p},-\ti{k}_{p}}\Biggr);
\end{equation*}
in particular, this intersection is non-empty. 
Therefore, by applying Proposition~\ref{prop:YB} 
to the subproducts in $\SR_{\Gamma,\ti{\bl}}$ and 
$\SR_{\Gamma',\ti{\bk}}$ corresponding to the subset $Y$ of $[m]$ 
(i.e., the parts changed by the procedure (YB)), we deduce that 
$\SR_{\Gamma,\ti{\bl}} = \SR_{\Gamma',\ti{\bk}}$. 
Therefore, we obtain
\begin{align*}
\bG_{\Gamma}(w,\xi) & = 
\sum_{\bchi \in \ol{\Par(\lambda)}} 
  q^{-|\bchi|-\pair{\lambda}{\xi}} \SR_{\Gamma,\ti{\bl}}[wt_{\xi+\iota(\bchi)}] 
= \sum_{\bchi \in \ol{\Par(\lambda)}} 
  q^{-|\bchi|-\pair{\lambda}{\xi}} \SR_{\Gamma',\ti{\bk}}[wt_{\xi+\iota(\bchi)}] \\
& = \bG_{\Gamma'}(w,\xi), 
\end{align*}
as desired. 
\end{proof}

Let $\lambda \in P$ be as above. 
We set $\SRL{\lambda}_{q}:=\SR_{\Gamma,\ti{\bl}}$, 
with $\Gamma$ a reduced $\lambda$-chain and 
$\ti{\bl}$ given by \eqref{eq:til}; 
by Proposition~\ref{prop:YB2} and Remark~\ref{rem:chain}, 
we see that the operator $\SRL{\lambda}_{q}$ 
does not depend on the choice of a reduced $\lambda$-chain $\Gamma$. 
For simplicity of notation, 
we write $\OQGl{\nu}$ as $[\nu]$ for $\nu \in P$. 
%
%
\begin{thm} \label{thm:main}
Let $x=wt_{\xi} \in W_{\af}^{\ge 0}$, with $w \in W$ and $\xi \in Q^{\vee,+}$. 
Let $\lambda \in P$ be an arbitrary weight, 
and let $\Gamma$ be an arbitrary reduced $\lambda$-chain. Then, 
%
%
\begin{equation} \label{eq:main}
[ -w_{\circ} \lambda ] \cdot [x] = 
\sum_{\bchi \in \ol{\Par(\lambda)}} 
  q^{-|\bchi|-\pair{\lambda}{\xi}} \SRL{\lambda}_{q}[wt_{\xi+\iota(\bchi)}]. 
\end{equation}
\end{thm}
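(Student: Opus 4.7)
The plan is to reduce the arbitrary-weight case to the dominant case (Theorem~\ref{chevdomqam}) and the anti-dominant case (Theorem~\ref{chevantidomqam}), combining them via a carefully chosen concatenated $\lambda$-chain. First, I decompose $\lambda = \lambda_{+} + \lambda_{-}$ with $(\lambda_{+})_{i} := \max(\lambda_{i}, 0)$ dominant and $(\lambda_{-})_{i} := \min(\lambda_{i}, 0)$ anti-dominant. This decomposition has two useful features: $\ol{\Par(\lambda)} = \ol{\Par(\lambda_{+})}$, and crucially $\pair{\lambda_{-}}{\iota(\bchi)} = 0$ for every $\bchi \in \ol{\Par(\lambda)}$, because $(\lambda_{-})_{i} = 0$ whenever $\chi^{(i)}$ is allowed to be nonempty. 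Since $[-w_{\circ}\lambda] = [-w_{\circ}\lambda_{+}] \cdot [-w_{\circ}\lambda_{-}]$ in $K_{T \times \mathbb{C}^{*}}(\QQ)$, the strategy is to compute $[-w_{\circ}\lambda_{+}] \cdot [x]$ by Theorem~\ref{chevdomqam}, then apply Theorem~\ref{chevantidomqam} term-by-term to the resulting classes, and match the outcome with the RHS of \eqref{eq:main}.

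To match the two computations on the operator side, I choose reduced chains $\Gamma_{+}$ for $\lambda_{+}$ and $\Gamma_{-}$ for $\lambda_{-}$, let $\Gamma_{-}^{\mathrm{tr}}$ be the translate of $\Gamma_{-}$ by $-\lambda_{+}$, and consider the concatenation $\Gamma := \Gamma_{+} \ast \Gamma_{-}^{\mathrm{tr}}$, which is a (possibly non-reduced) $\lambda$-chain. By Proposition~\ref{prop:YB2} together with the analogous invariance under procedure (D) from Remark~\ref{rem:chain} (which follows from Lemma~\ref{lem:QXt}(1) along the lines of the proof of Proposition~\ref{prop:YBq1}), we still have $\SR_{\Gamma, \ti{\bl}} = \SRL{\lambda}_{q}$. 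Under the natural bijection $A \leftrightarrow (A_{+}, A_{-})$ between $\CA(w, \Gamma)$ and pairs with $A_{+} \in \CA(w, \Gamma_{+})$ and $A_{-} \in \CA(\ed(w, A_{+}), \Gamma_{-})$ (using that $\Gamma_{-}^{\mathrm{tr}}$ and $\Gamma_{-}$ share the same root sequence), the statistics decompose as $\ed(w, A) = \ed(\ed(w, A_{+}), A_{-})$, $\dn(w, A) = \dn(w, A_{+}) + \dn(\ed(w, A_{+}), A_{-})$, $n(A) = n(A_{+}) + |A_{-}|$, and $\wt(w, A) = \wt(w, A_{+}) + \wt(\ed(w, A_{+}), A_{-})$. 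A direct computation of complementary heights shows $\ti{l}_{j}^{\Gamma} = \ti{l}_{j}^{+} + \pair{\lambda_{-}}{\beta_{j}^{\vee}}$ on the $\Gamma_{+}$-part and $\ti{l}_{j}^{\Gamma} = \ti{l}_{j}^{-}$ on the $\Gamma_{-}^{\mathrm{tr}}$-part, yielding
\[
\Ht(w, A) = \Ht_{+}(w, A_{+}) + \Ht_{-}(\ed(w, A_{+}), A_{-}) + \pair{\lambda_{-}}{\dn(w, A_{+})}.
\]

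Assembling the two Chevalley formulas then produces a triple sum over $(A_{+}, A_{-}, \bchi_{+})$ whose total $q$-exponent is
\[
-|\bchi_{+}| - \pair{\lambda_{+}}{\xi} - \Ht_{+}(w, A_{+}) - \pair{\lambda_{-}}{\xi + \dn(w, A_{+}) + \iota(\bchi_{+})} - \Ht_{-}(\ed(w, A_{+}), A_{-}).
\]
Using $\pair{\lambda_{-}}{\iota(\bchi_{+})} = 0$ and the height identity above, this collapses to $-|\bchi| - \pair{\lambda}{\xi} - \Ht(w, A)$; combined with the other statistic decompositions, it exactly matches the expansion of the RHS of \eqref{eq:main} via Corollary~\ref{cor:RXik}. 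The main obstacle will be the weight decomposition $\wt(w, A) = \wt(w, A_{+}) + \wt(\ed(w, A_{+}), A_{-})$: since the affine reflections $\ha{r}_{j}$ on the $\Gamma_{-}^{\mathrm{tr}}$-part are conjugates of those in $\Gamma_{-}$ by the translation by $-\lambda_{+}$, and $\wt(w, A)$ is a composition of affine (not linear) maps applied to $-\lambda = -\lambda_{+} + (-\lambda_{-})$, the identity must be checked via a careful affine calculation. This is cleanest to carry out in the gallery language of Section~\ref{sec:gal}, where Proposition~\ref{conc-gal} makes the decomposition essentially automatic from the concatenation structure of $\Gamma$.
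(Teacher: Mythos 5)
Your plan follows the paper's proof essentially verbatim: decompose $\lambda = \lambda^+ + \lambda^-$, concatenate reduced chains $\Gamma^+ \ast \Gamma^-$ (with $\Gamma^-$ translated by $-\lambda^+$), decompose the statistics $\ed$, $\dn$, $n$, $\Ht$, $\wt$ across the concatenation (using the gallery language and Proposition~\ref{conc-gal} for the weight identity), and then pass to an arbitrary reduced $\lambda$-chain via Remark~\ref{rem:chain}. All the local computations you sketch (the two properties of the decomposition $\lambda = \lambda_+ + \lambda_-$, the complementary-height bookkeeping, the collapse of the $q$-exponent) agree with the paper's.

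The one genuine gap is the step where you assert $\SR_{\Gamma,\ti{\bl}} = \SRL{\lambda}_q$ by ``Proposition~\ref{prop:YB2} together with the analogous invariance under procedure (D), which follows from Lemma~\ref{lem:QXt}(1) along the lines of the proof of Proposition~\ref{prop:YBq1}.'' Lemma~\ref{lem:QXt}(1) only gives $\SQ_{\beta,k}\SQ_{\pm\beta,l}=0$ when $\beta$ is \emph{not} a simple root; for a simple root $\alpha_i$ the product $\SR_{\alpha_i,k}\SR_{-\alpha_i,k}$ is not the identity (at $q=1$ it equals $1-\st_i$, cf.~Lemma~\ref{lem:QXt1}(1)), so a (D) move deleting a segment $\alpha_i,-\alpha_i$ would genuinely change the operator. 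Your reference to Proposition~\ref{prop:YBq1} does not help: there the $1-\st_i$ factor could be cancelled because it appeared on \emph{both} sides of the Yang--Baxter relation, whereas a (D) move is one-sided. What is missing is an argument that deletions of simple-root pairs never occur. This is exactly the paper's Claim~\ref{c:main}: one checks directly that $\Gamma_0 = \Gamma^+ \ast \Gamma^-$ never contains both $\alpha_i$ and $-\alpha_i$ (for $\pair{\lambda}{\alpha_i^\vee}>0$ only $\Gamma^+$ contributes $\alpha_i$; for $\pair{\lambda}{\alpha_i^\vee}<0$ only $\Gamma^-$ contributes $-\alpha_i$; for $\pair{\lambda}{\alpha_i^\vee}=0$ neither appears, using \eqref{ranges}), and this no-opposite-simple-pair property is preserved by both (YB) and (D) moves, so the whole chain of moves from $\Gamma_0$ to an arbitrary reduced $\lambda$-chain only deletes non-simple root pairs, which is where Lemma~\ref{lem:QXt}(1) actually applies. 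Adding that inductive argument closes the gap.
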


\begin{proof}
If $\lambda$ is a dominant (resp., anti-dominant) weight, 
then equation \eqref{eq:main} follows from Theorem~\ref{chevdomqam}
(resp., Theorem~\ref{chevantidomqam}) and \eqref{eq:G2}, 
together with the fact that the operator $\SRL{\lambda}_{q}$ 
does not depend on the choice of a reduced $\lambda$-chain $\Gamma$; 
recall that the lex $\lambda$-chain $\Gamma_{\mathrm{lex}}(\lambda)$ is a reduced $\lambda$-chain. 

Now, let $\lambda \in P$. 
Then, $\lambda = \lambda^{+} + \lambda^{-}$, where 
\begin{equation*}
\lambda^{+}:=\sum_{i \in I} \max\bigl( \pair{\lambda}{\alpha_{i}^{\vee}},\,0\bigr) \varpi_{i}, \qquad 
\lambda^{-}:=\sum_{i \in I} \min\bigl( \pair{\lambda}{\alpha_{i}^{\vee}},\,0\bigr) \varpi_{i};
\end{equation*}
note that $\lambda^{+}$ is dominant and 
$\lambda^{-}$ is anti-dominant. 
Let $\Gamma^{\pm}$ be reduced $\lambda^{\pm}$-chains, respectively, 
and write them as: 
\begin{equation*}
\Gamma^{+}: A_{\circ}=A_{0}' \edge{-\beta_{1}'} \cdots 
          \edge{-\beta_{m'}'} A_{m'}'=A_{-\lambda^{+}}, 
\end{equation*}
\begin{equation*}
\Gamma^{-}: A_{\circ}=A_{0}'' \edge{-\beta_{1}''} \cdots 
          \edge{-\beta_{m''}''} A_{m''}''=A_{-\lambda^{-}}; 
\end{equation*}
we have $\beta_{i}' \in \Phi^{+}$ for all $1 \le i \le m'$, and 
$\beta_{i}'' \in \Phi^{-}$ for all $1 \le i \le m''$. 
Let $H_{\beta_{i}',-l_{i}'}$ be the common wall of $A_{i-1}'$ and $A_{i}'$ 
for $i=1,2,\dots,m'$, and let $H_{\beta_{i}'',-l_{i}''}$ be 
the common wall of $A_{i-1}''$ and $A_{i}''$ for $i=1,2,\dots,m''$.
Let $\Gamma_{0}$ be 
the concatenation of $\Gamma^{+}$ and $\Gamma^{-}$, that is, 
\begin{equation*}
\Gamma_{0}: \underbrace{ A_{\circ}=A_{0} \edge{-\beta_{1}} \cdots \edge{-\beta_{m'}} A_{m'}}_{\Gamma^{+}}
=
\underbrace{A_{-\lambda^{+}} \edge{-\beta_{m'+1}} \cdots \edge{-\beta_{m}} A_{m} = A_{-\lambda} }_{%
\Gamma^{-} \text{ (shifted by $-\lambda^{+}$)} }, 
\end{equation*}
where $m=m'+m''$, and 
\begin{align}
A_{i} & =
 \begin{cases}
 A_{i}' & \text{for $0 \le i \le m'$}, \\[1.5mm]
 A_{i-m'}''-\lambda^{+} & \text{for $m' \le i \le m=m'+m''$},
 \end{cases} \label{eq:A} \\[3mm]
\beta_{i} & =
 \begin{cases}
 \beta_{i}' & \text{for $0 \le i \le m'$}, \\[1.5mm]
 \beta_{i-m'}'' & \text{for $m' \le i \le m=m'+m''$}.
 \end{cases} \label{eq:beta}
\end{align}
If we denote by $H_{\beta_{i},-l_{i}}$ the common wall of $A_{i-1}$ and $A_{i}$ 
for $i=1,2,\dots,m$, then 
\begin{equation} \label{eq:l}
l_{i}=
 \begin{cases}
 l_{i}' & \text{for $0 \le i \le m'$}, \\[1.5mm]
 l_{i-m'}''+ \pair{\lambda^{+}}{(\beta_{i-m'}'')^{\vee}} & \text{for $m' \le i \le m=m'+m''$}.
 \end{cases}
\end{equation}

We will show that 
\begin{equation}
[ -w_{\circ} \lambda ] \cdot [ x ] = \bG_{\Gamma_{0}}(w,\xi). 
\end{equation}
From Theorems~\ref{chevdomqam} and \ref{chevantidomqam}, we see that 
\begin{align}
& [ -w_{\circ} \lambda ] \cdot [ x ]
  = [ -w_{\circ} \lambda^{-} ] \cdot [ -w_{\circ} \lambda^{+} ] \cdot [ x ] \label{eq:G0-1} \\[3mm]
& = \sum_{A \in \CA(w,\Gamma^{+})}
     \sum_{ \bchi \in \ol{\Par(\lambda^{+})} }
     q^{-\Ht(w,A)-\pair{\cri{\lambda^+}}{\xi}-|\bchi|} \be^{\wt(w,A)}
     [ -w_{\circ} \lambda^{-} ] \cdot [ \ed(w,A)t_{\xi+\dn(w,A)+\iota(\bchi)} ] \nonumber \\[3mm]
& = \sum_{A \in \CA(w,\Gamma^{+})}\sum_{B \in \CA(\ed(w,A),\Gamma^{-})}
    \sum_{ \bchi \in \ol{\Par(\lambda^{+})} } (-1)^{|B|} \nonumber \\
& \hspace*{25mm} \times 
    q^{-\Ht(w,A)-\pair{\lambda^{+}}{\xi}-|\bchi|-\Ht(\cri{{\rm end}(w,A)},B)-\pair{\lambda^{-}}{\xi+\dn(w,A)+\iota(\bchi)} } \nonumber \\
& \hspace*{25mm} \times \be^{\wt(w,A)+\wt(\cri{{\rm end}(w,A)},B)} [ \ed(\cri{{\rm end}(w,A)},B)t_{\xi+\dn(w,A)+\iota(\bchi)+\dn(\cri{{\rm end}(w,A)},B)} ]; \nonumber 
\end{align}
note that $\pair{\lambda^{-}}{\iota(\bchi)}=0$ and $\ol{\Par(\lambda^{+})} = \ol{\Par(\lambda)}$. 
We have a natural bijection from the set $\bigl\{ (A,B) \mid A \in \CA(w,\Gamma^{+}),\,
B \in \CA(\ed(w,A),\Gamma^{-})\bigr\}$ onto $\CA(w,\Gamma_{0})$ given by 
concatenating $A \in \CA(w,\Gamma^{+})$ with $B \in \CA(\ed(w,A),\Gamma^{-})$, 
which we denote by $A \ast B$. In addition, it is easily verified that 
\begin{equation*}
\begin{split}
& n(A \ast B) = |B|, \quad 
  \dn(w,A)+\dn(\ed(w,A), B) = \dn (w, A \ast B), \\ 
& \ed(\ed(w,A),B)=\ed(w,A \ast B), 
\end{split}
\end{equation*}
and
\begin{align*}
& \Ht(w,A) + \Ht(\ed(w,A), B) + \pair{\lambda^{-}}{\dn(w,A)} \\[3mm]
& \qquad =
  \sum_{j \in A^{-}} \bigl(\pair{\lambda^{+}}{(\beta_{j}')^{\vee}}-l_{j}' \bigr) - 
  \sum_{j \in B^{-}} \bigl(\pair{\lambda^{-}}{(\beta_{j}'')^{\vee}}-l_{j}'' \bigr) + 
  \sum_{j \in A^{-}} \pair{\lambda^{-}}{(\beta_{j}')^{\vee}} \\[3mm]
& \qquad =
  \sum_{j \in A^{-}} \bigl( \pair{\lambda}{(\beta_{j}')^{\vee}}-l_{j}' \bigr) - 
  \sum_{j \in B^{-}} \bigl( \pair{\lambda}{(\beta_{j}'')^{\vee}}-
  \pair{\lambda^{+}}{(\beta_{j}'')^{\vee}}-l_{j}'' \bigr) \\[3mm]
& \qquad = \Ht(w,A \ast B) \qquad \text{by \eqref{eq:beta} and \eqref{eq:l}}. 
\end{align*}
On another hand, consider the galleries $\gamma(w,A)$ and $\gamma(\ed(w,A), B)+\wt(\gamma(w,A))$, which are constructed based on $\Gamma^+$ and $\Gamma^-$, respectively (cf. Section~\ref{sec:gal}). By Proposition~\ref{conc-gal}~(2), these galleries can be concatenated (cf. Definition~\ref{def-conc}). Moreover, by the construction of these galleries, we have
\[\gamma(w,A)\ast\left(\gamma(\ed(w,A), B)+\wt(\gamma(w,A))\right)=\gamma(w,A\ast B)\,,\]
where $\gamma(w,A\ast B)$ is constructed based on $\Gamma_0$. 
By considering the weights of the two sides, and by applying Proposition~\ref{conc-gal}~(1), we derive
\begin{equation*}
\wt(w, A)+\wt(\ed(w,A), B)=\wt(w, A \ast B)\,.
\end{equation*} 
We conclude that the right-hand side of \eqref{eq:G0-1} is identical to $\bG_{\Gamma_{0}}(w,\xi)$, as desired. 
Hence, by \eqref{eq:G2}, we have
\begin{equation} \label{eq:main1}
[ -w_{\circ} \lambda ] \cdot [ x ] = \bG_{\Gamma_{0}}(w,\xi) = 
\sum_{ \chi \in \ol{\Par(\lambda)} } 
q^{ -|\bchi|-\pair{\lambda}{\xi} } \SR_{\Gamma_{0},\ti{\bl}_{0}}[wt_{\xi+\iota(\bchi)}]\,,
\end{equation}
where $\ti{\bl}_{0}$ is given by \eqref{eq:til} for $\Gamma_{0}$ 
(see also \eqref{eq:l}). 

Now, let $\Gamma$ be an arbitrary reduced $\lambda$-chain, 
with $\ti{\bl}$ given by \eqref{eq:til} for this $\Gamma$.
Because the concatenation $\Gamma_{0}$ above of $\Gamma^{+}$ and $\Gamma^{-}$ 
is a $\lambda$-chain, there exists a sequence $\Gamma_{0}, \Gamma_{1},\dots,\Gamma_{s}=\Gamma$ 
of $\lambda$-chains such that $\Gamma_{t}$ is obtained from $\Gamma_{t-1}$
by applying either (YB) or (D) for each $t=1,2,\dots,s$ 
(see Remark~\ref{rem:chain}). For $t = 1,2,\dots,s$, 
let $\ti{\bl}_{t}$ be given by \eqref{eq:til} for $\Gamma_{t}$. 
We show that 
\begin{equation} \label{eq:main2}
\SR_{\Gamma_{t-1},\ti{\bl}_{t-1}} = \SR_{\Gamma_{t},\ti{\bl}_{t}}%
\qquad \text{for all $t=1,2,\dots,s$}.
\end{equation}
If $\Gamma_{t}$ is obtained from $\Gamma_{t-1}$
by applying (YB), then it follows from Proposition~\ref{prop:YB2} that 
$\SR_{\Gamma_{t-1},\ti{\bl}_{t-1}} = \SR_{\Gamma_{t},\ti{\bl}_{t}}$. 
Assume that $\Gamma_{t}$ is obtained from $\Gamma_{t-1}$ by applying (D). 

\begin{claim} \label{c:main}
For $0 \le u \le s$, the $\lambda$-chain 
$\Gamma_{u}$ does not contain both of the roots
$\alpha_{i}$ and $-\alpha_{i}$ for any $i \in I$.
\end{claim}

\noindent
{\it Proof of Claim~{\rm \ref{c:main}}. }
We show this claim by induction on $0 \le u \le s$. 
Assume that $u = 0$. 
Let $i \in I$, and assume that $\langle \lambda,\alpha_{i}^{\vee} \rangle > 0$; 
note that $\langle \lambda^{+},\alpha_{i}^{\vee} \rangle > 0$  and 
$\langle \lambda^{-},\alpha_{i}^{\vee} \rangle = 0$. 
We see from \eqref{ranges} (see also \cite[Lemma~6.2]{LP}) 
that $\Gamma^{+}$ contains $\alpha_{i}$, 
but does not contain $-\alpha_{i}$, and that $\Gamma^{-}$ contains neither 
$\alpha_{i}$ nor $-\alpha_{i}$. Hence the concatenation $\Gamma_{0}$ 
of $\Gamma^{+}$ and $\Gamma^{-}$ contains $\alpha_{i}$, 
but does not contain $-\alpha_{i}$. 
Similarly, if $\langle \lambda,\alpha_{i}^{\vee} \rangle < 0$ (resp., $=0$), 
then $\Gamma_{0}$ contains $-\alpha_{i}$, 
but does not contain $\alpha_{i}$ (resp., $\Gamma_{0}$ contains 
neither $\alpha_{i}$ nor $-\alpha_{i}$). 

Assume that $u > 0$. 
If $\Gamma_{u}$ is obtained from $\Gamma_{u-1}$ by applying (D), 
then it is obvious by our induction hypothesis (for $\Gamma_{u-1}$)
and the definition of (D) that $\Gamma_{u}$ does not contain 
both of the roots $\alpha_{i}$ and $-\alpha_{i}$ for any $i \in I$.
Assume that $\Gamma_{u}$ is obtained from $\Gamma_{u-1}$ by applying (YB). 
Then we deduce by the definition of (YB) that 
the roots appearing in $\Gamma_{u}$ are the same as 
those appearing in $\Gamma_{u-1}$. It follows from this fact and 
our induction hypothesis (for $\Gamma_{u-1}$) that $\Gamma_{u}$ 
does not contain both of the roots $\alpha_{i}$ and $-\alpha_{i}$ 
for any $i \in I$. This proves Claim~\ref{c:main}. \bqed

\smallskip

Now, let us show \eqref{eq:main2} in the case that 
$\Gamma_{t}$ is obtained from $\Gamma_{t-1}$ by applying (D). 
In this case, a product of the form 
$\SR_{\beta,k}\SR_{-\beta,k}$ for some $\beta \in \Phi$ and $k \in \mathbb{Z}$ 
appears (at the part corresponding to the part in $\Gamma_{t-1}$ deleted by (D)) 
in $\SR_{\Gamma_{t-1},\ti{\bl}_{t-1}}$. We deduce by Claim~\ref{c:main} that 
$\beta \not\in \Pi \cup (-\Pi)$. Hence it follows from Lemma~\ref{lem:QXt} that 
$\SR_{\beta,k}\SR_{-\beta,k}$ is the identity map. Therefore, we also obtain 
$\SR_{\Gamma_{t-1},\ti{\bl}_{t-1}} = \SR_{\Gamma_{t},\ti{\bl}_{t}}$ in this case.
Thus we have shown \eqref{eq:main2}, which implies that 
\begin{equation} \label{eq:main3}
\SR_{\Gamma_{0},\ti{\bl}_{0}}=
\SR_{\Gamma_{1},\ti{\bl}_{1}}= \dots = 
\SR_{\Gamma_{s},\ti{\bl}_{s}}=
\SR_{\Gamma,\ti{\bl}}=\SRL{\lambda}_{q}.
\end{equation}
Combining \eqref{eq:main1} and \eqref{eq:main3}, 
we obtain \eqref{eq:main}. 
This completes the proof of Theorem~\ref{thm:main}. 
\end{proof}

Theorem~\ref{genchev} 
follows from Theorem~\ref{thm:main} and \eqref{eq:G2}. 

\if0 
%
%
\begin{cor} \label{cor:main}
Keep the notation and setting in Theorem~\ref{thm:main}. 
It holds that 
\begin{align*}
& [ -w_{\circ} \lambda ] \cdot [x] = \bG_{\Gamma}(w,\xi) \\[2mm]
& = \sum_{\bchi \in \ol{\Par(\lambda)}}\,
  \sum_{A \in \CA(w,\Gamma)} 
  (-1)^{n(A)}q^{-\Ht(A)-|\bchi|-\pair{\lambda}{\xi}} 
  \be^{\wt(A)} [ \ed(A)t_{\xi+\dn(A)+\iota(\bchi)} ]. 
\end{align*}
\end{cor}

By the same argument as for \cite[Proposition~14.5]{LP}, 
we can prove Proposition~\ref{prop:Rlam2}. We use the facts about the central points of alcoves 
obtained in \cite[Lemmas~14.1 and 14.2]{LP}, and 
the commutation relations in Lemma~\ref{lem:QXt} instead of 
\cite[Eqs.(10.3)--(10.5)]{LP} in the proof of \cite[Proposition~14.5]{LP}. 
%
%
\begin{prop} \label{prop:Rlam2}
Let $\lambda \in P$ be an arbitrary weight, 
and let $\Gamma$ be a reduced $\lambda$-chain of the form \eqref{eq:Gamma}. 
For $w \in W$ and $\xi \in Q^{\vee}$, we have
\begin{equation}
\SR^{\lambda} \OQG{ wt_{\xi} } = 
\sum_{ J = \{j_{1},\dots,j_{s}\} } 
\be^{ -w\widehat{r}_{j_{1}}\widehat{r}_{j_{2}} \cdots \widehat{r}_{j_{s}}(-\lambda) }
\SQ_{\beta_{j_{s}}} \cdots \SQ_{\beta_{j_{2}}}\SQ_{\beta_{j_{1}}} \OQG{ wt_{\xi} }\,, 
\end{equation}
where $J = \{j_{1},\dots,j_{s}\}$ runs over all subsets of 
$\{1,2,\dots,m\}$, and $\widehat{r}_{j}$ denotes the affine reflection 
with respect to the common face of $A_{j-1}$ and $A_{j}$ for $1 \le j \le m$. 
\end{prop}

%
%
\begin{prop} \label{prop:Rlam3}
Let $\lambda \in P$ be an arbitrary weight. 
For $x = wt_{\xi} \in \Wafp$ with $w \in W$ and $\xi \in Q^{\vee}$, we have
\begin{equation} \label{eq:q=1}
[\mathcal{O}_{\mathbf{Q}_{G}}(- w_{\circ} \lambda)] \cdot \OQG{x} = 
\sum_{ \bm{\chi} \in \overline{\mathrm{Par}(\lambda)} } 
\SR^{[\lambda]} \OQG{ wt_{\xi+\iota(\bm{\chi})} }\,. 
\end{equation}
\end{prop}

\begin{proof}
If $\lambda$ is a dominant (resp., anti-dominant) weight, 
then we deduce by Theorem~\ref{chevdomqam} (resp., Theorem~\ref{chevantidomqam}) 
that \eqref{eq:q=1} holds for $\SR^{[\lambda]}$ defined by 
the lex $\lambda$-chain $\Gamma_{\mathrm{lex}}(\lambda)$. Hence we see by Remark~\ref{rem:YBa} 
that Proposition~\ref{prop:Rlam3} holds 
if $\lambda$ is dominant (resp., anti-dominant). 

Now, let $\lambda \in P$, and write it as
$\lambda=\sum_{i \in I}m_{i}\varpi_{i}$ with $m_{i} \in \mathbb{Z}$. 
Then, $\lambda = \lambda^{+} + \lambda^{-}$, where 
\begin{equation}
\lambda^{+}:=\sum_{i \in I} \max(m_{i},0) \varpi_{i}, \qquad 
\lambda^{-}:=\sum_{i \in I} \min(m_{i},0) \varpi_{i}\,;
\end{equation}
note that $\lambda^{+}$ is dominant, and 
$\lambda^{-}$ is anti-dominant, and that 
$\overline{\mathrm{Par}(\lambda)} = \overline{\mathrm{Par}(\lambda^{+})}$. 
We deduce that
\begin{equation*}
\begin{split}
[\mathcal{O}_{\mathbf{Q}_{G}}(- w_{\circ} \lambda)] \cdot \OQG{x} 
& = 
[\mathcal{O}_{\mathbf{Q}_{G}}(- w_{\circ} \lambda^{-})] \cdot 
\bigl\{ 
[\mathcal{O}_{\mathbf{Q}_{G}}(- w_{\circ} \lambda^{+})] \cdot \OQG{x} \bigr\} \\
& = [\mathcal{O}_{\mathbf{Q}_{G}}(- w_{\circ} \lambda^{-})] 
    \sum_{ \bm{\chi} \in \overline{\mathrm{Par}(\lambda^{+})} }
    \SR^{[\lambda^{+}]} \OQG{ wt_{\xi+\iota(\bm{\chi})} } \\[2mm]
& = \sum_{ \bm{\chi} \in \overline{\mathrm{Par}(\lambda)} }
    \SR^{[\lambda^{-}]}\SR^{[\lambda^{+}]} \OQG{ wt_{\xi+\iota(\bm{\chi})} }\,. 
\end{split}
\end{equation*}
Let $\Gamma'=(\gamma_{1},\gamma_{2},\dots,\gamma_{k})$ be 
the $\lambda$-chain obtained by concatenating 
a reduced $\lambda^{+}$-chain $\Gamma^{+}$ with 
a reduced $\lambda^{-}$-chain $\Gamma^{-}$; 
note that $\Gamma'$ is not reduced in general. 
Then we have 
\begin{equation*}
\SR_{\gamma_{k}}\SR_{\gamma_{k-1}} \cdots \SR_{\gamma_{2}}\SR_{\gamma_{1}} = 
\SR^{[\lambda^{-}]}\SR^{[\lambda^{+}]}\,. 
\end{equation*}
In order to prove that this operator is identical to $\SR^{[\lambda]}$, 
it suffices to show that $a_{i}=0$ for all $i \in I$ in the notation of 
Remark~\ref{rem:YBa}\,(2). Let $\Gamma$ be a reduced $\lambda$-chain. 
Then there exists a sequence $\Gamma'=\Gamma_{0}, \Gamma_{1},\dots,\Gamma_{s}=\Gamma$ 
of $\lambda$-chains such that $\Gamma_{t}$ is obtained from $\Gamma_{t-1}$
by performing either (YB) or (D) for each $t=1,2,\dots,s$ 
(see Remark~\ref{rem:chain}). We show by induction on $0 \le t \le s$ that 
$\Gamma_{t}$ does not contain both of the roots
$\alpha_{i}$ and $-\alpha_{i}$ for any $i \in I$. Assume that $t = 0$. 
Let $i \in I$, and assume that $\langle \lambda,\alpha_{i}^{\vee} \rangle > 0$; 
note that $\langle \lambda^{+},\alpha_{i}^{\vee} \rangle > 0$  and 
$\langle \lambda^{-},\alpha_{i}^{\vee} \rangle = 0$. 
We see from \eqref{ranges} (see also \cite[Lemma~6.2]{LP}) 
that $\Gamma^{+}$ contains $\alpha_{i}$, 
but does not contain $-\alpha_{i}$, and that $\Gamma^{-}$ contains neither 
$\alpha_{i}$ nor $-\alpha_{i}$. Hence $\Gamma'$ contains $\alpha_{i}$, 
but does not contain $-\alpha_{i}$. Similarly, if 
$\langle \lambda,\alpha_{i}^{\vee} \rangle < 0$ (resp., $=0$), 
then $\Gamma'$ contains $-\alpha_{i}$, 
but does not contain $\alpha_{i}$ (resp., $\Gamma'$ contains 
neither $\alpha_{i}$ nor $-\alpha_{i}$). 
Assume that $t > 0$. 
If $\Gamma_{t}$ is obtained by applying (D) to $\Gamma_{t-1}$, 
then it is obvious by the induction hypothesis 
and the definition of (D) that our claim holds for $\Gamma_{t}$. 
Assume that $\Gamma_{t}$ is obtained by applying (YB) to $\Gamma_{t-1}$. 
Then we deduce by the definition of (YB) that 
the roots appearing in $\Gamma_{t}$ are the same as 
those appearing in $\Gamma_{t-1}$. Hence our claim holds for $\Gamma_{t}$. 
Thus we have shown our claim, and hence that 
$a_{i}=0$ for all $i \in I$. This completes the proof of 
Proposition~\ref{prop:Rlam3}. 
\end{proof} 

\begin{proof}[Proof of Theorem~{\rm \ref{qe1}}] 
The statement follows directly from Propositions~\ref{prop:Rlam2} and~\ref{prop:Rlam3}. \end{proof}

\fi


\section{The quantum $K$-theory of flag \revise{manifolds}}\label{qkgb}

Y.-P. Lee defined the (small) {\em quantum $K$-theory} of a smooth projective variety $X$, denoted by $QK(X)$ (see \cite{leeqkt}). This is a deformation of the ordinary $K$-ring of $X$, analogous to the relation between quantum cohomology and ordinary cohomology. The deformed product is defined in terms of certain generalizations of {\em Gromov-Witten invariants} (i.e., the structure constants in quantum cohomology), called {\em quantum $K$-invariants of Gromov-Witten type}. 

In order to describe the (small) $T$-equivariant quantum $K$-theory $QK_T(G/B)$, for the finite-dimensional flag manifold $G/B$, we associate a (Novikov) variable $Q_k$ to each simple coroot $\alpha_k^{\vee}$, with $k\in I=\{1,\ldots,r\}$, 
and let $\bZ[Q]:=\bZ[Q_1,\dots,Q_r]$, $\bZ[\![Q]\!] := \bZ[\![Q_1, \ldots, Q_{r}]\!]$; 
given $\xi=d_1 \alpha_1^\vee+\cdots+d_r\alpha_r^\vee$ in $Q^{\vee,+}$, let $Q^{\xi} :=Q_1^{d_1}\cdots Q_r^{d_r}$. 
Also, let $\bZ[P][Q]:=\bZ[P] \otimes \bZ[Q]$ and $\bZ[P][\![Q]\!] := \bZ[P] \otimes \bZ[\![Q]\!]$, where the group algebra $\mathbb{Z}[P]$ of $P$ was defined at the beginning of Section~\ref{csi}. 
We define $QK_{T}(G/B)$ to be the $\bZ[P][\![Q]\!]$-module 
$K_T(G/B) \otimes_{{\mathbb Z}[P]} \bZ[P][\![Q]\!]$ ($\supset K_{T}(G/B) \otimes_{\mathbb{Z}[P]} \bZ[P][Q]$) equipped with the quantum multiplication, denoted by $\cdot$,
where $K_{T}(G/B)$ denotes the ordinary $T$-equivariant $K$-theory of $G/B$. 
The algebra $QK_T(G/B)$ has a $\mathbb{Z}[P][\![Q]\!]$-basis given by the classes $[{\mathcal O}^w]$, $w \in W$, of the structure sheaf of the (opposite) Schubert variety $X^{w} \subset G/B$ of codimension $\ell(w)$. 

\subsection{Main results}
 It is proved in \cite{kat1} that there exists a $\mathbb{Z}[P]$-module isomorphism from $QK_T(G/B)$ onto $K_{T}(\QQ)$ that respects the quantum multiplication in $QK_T(G/B)$ and the tensor product in $K_{T}(\QQ)$. 
More precisely, the isomorphism respects the quantum multiplication in $QK_{T}(G/B)$ with the line bundle class $[\mathcal{O}_{G/B}(-\varpi_{k})]$ and the tensor product in $K_{T}(\QQ)$ with the line bundle class $\OQGl{w_{\circ} \varpi_{k}}$, for all $k \in I$; 
in our notation, the line bundle $\mathcal{O}_{G/B}(- \nu)$ over $G/B$ for $\nu \in P$ denotes the $G$-equivariant line bundle constructed as the quotient space $G \times^{B} \mathbb{C}_{\nu}$ of the product space $G \times \mathbb{C}_{\nu}$ by the usual (free) left action of $B$, given by $b.(g, u) := (gb^{-1}, b u)$ for $b \in B$ and $(g, u) \in G \times \mathbb{C}_{\nu}$, where $\mathbb{C}_{\nu}$ is the one-dimensional $B$-module of weight $\nu \in P$. 
Here we remark that in order to translate the Chevalley formula in $K_{T}(\QQ)$ for fundamental weights into the one in the quantum $K$-theory of $G/B$, we need to consider $K_{T}(G/B) \otimes_{\mathbb{Z}[P]} \bZ[P][\![Q]\!]$ ($\supset K_{T}(G/B) \otimes_{\mathbb{Z}[P]} \bZ[P][Q]$);
for example, in type $A_{r}$, the tensor product in $K_{T}(\QQ)$ with the line bundle class $[\mathcal{O}_{\QQ}(-w_{\circ} \varepsilon_{k})]$ for $1 \leq k \leq r+1$ corresponds to the quantum multiplication with the class $\frac{1}{1-Q_{k}}[\mathcal{O}_{G/B}(\varepsilon_{k})]$, where $\varepsilon_{k} := \varpi_{k} - \varpi_{k-1}$, with $\varpi_{0} := 0$, $\varpi_{r+1} := 0$, and $Q_{r+1} := 0$ (see \cite[Section~5]{MNS} for details). 
Also, note that the isomorphism above sends each (opposite) Schubert class $[{\mathcal O}^w] Q^{\xi}$ (multiplied by a monomial $Q^{\xi}$ in the Novikov variables) in $QK_T(G/B)$ to the semi-infinite Schubert class $\OQG{wt_{\xi}}$ in $K_{T}(\QQ)$ for $w \in W$ and $\xi \in Q^{\vee,+}$; 
\revise{in our notation, this map sends $\mathbf{e}^{\mu} [\mathcal{O}^{w}] Q^{\xi}$ to $\mathbf{e}^{-\mu} \OQG{w t_{\xi}}$ for $w \in W$, $\xi \in Q^{\vee,+}$, and $\mu \in P$.} 
These results and the special case that $\lambda = - \varpi_{k}$ of the formula in Theorem~\ref{chevantidomqam} imply the Chevalley formula for $QK_T(G/B)$, stated below. We also use the well-known fact that  $[{\mathcal O}^{s_k}]=1-\mathbf{e}^{-\varpi_k}[\mathcal{O}_{G/B}(-\varpi_{k})]$ in $QK_T(G/B)$.

\begin{thm}\label{qkchev} Let $k \in I$, and fix a reduced $(-\varpi_k)$-chain $\Gamma(-\varpi_k)$. Then, in $QK_{T}(G/B)$, we have \revise{for $w \in W$}, 
\begin{align}\label{qkchev-f}&[{\mathcal O}^{s_k}]\cdot [{\mathcal O}^w] = \\[3mm]&\quad=
(1-\mathbf{e}^{w(\varpi_k) - \varpi_k})  [{\mathcal O}^w] +
\sum_{A\in{\mathcal A}(w,\Gamma(-\varpi_k))\setminus\{\emptyset\}}
(-1)^{|A|-1} \,Q^{{\rm{down}}(w,A)}\mathbf{e}^{-\varpi_k-{\rm{wt}}(w,A)} [{\mathcal O}^{{\rm end}(w,A)}]\,.\nonumber\end{align}
\end{thm}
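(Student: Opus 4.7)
The plan is to specialize our general Chevalley formula to $\lambda=-\varpi_k$ in $K_{T\times\mathbb{C}^*}'(\mathbf{Q}_G)$, transport the resulting identity to $QK_T(G/B)$ via Kato's isomorphism, and finally convert from $\mathcal{O}_{G/B}(-\varpi_k)$ to $[\mathcal{O}^{s_k}]$ using the standard relation $[\mathcal{O}^{s_k}]=1-\mathbf{e}^{-\varpi_k}[\mathcal{O}_{G/B}(-\varpi_k)]$.

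First, I would apply Theorem~\ref{genchev} with $\lambda=-\varpi_k$, the chosen reduced $(-\varpi_k)$-chain $\Gamma(-\varpi_k)$, and $x=wt_0 = w$ (so $\xi=0$). Two simplifications take place: since $\max(-\delta_{i,k},0)=0$ for every $i\in I$, the set $\overline{\mathrm{Par}(-\varpi_k)}$ reduces to the single empty tuple, so the $\bchi$-sum disappears and $|\bchi|=\iota(\bchi)=0$; moreover, since $-\varpi_k$ is anti-dominant, the condition $\langle-\varpi_k,\beta_i^\vee\rangle\ge 1$ forces $\beta_i\in\Phi^-$ for every $i=1,\dots,m$, so $n(A)=|A|$ for every admissible subset $A$. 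Setting $q=1$ (which is legitimate because the resulting sum is finite, hence lies in $K_T'(\mathbf{Q}_G)$), Theorem~\ref{genchev} becomes
\[
[\mathcal{O}_{\mathbf{Q}_G}(w_\circ\varpi_k)]\cdot[\mathcal{O}_{\mathbf{Q}_G(w)}]
=\sum_{A\in\mathcal{A}(w,\Gamma(-\varpi_k))}(-1)^{|A|}\mathbf{e}^{\mathrm{wt}(w,A)}[\mathcal{O}_{\mathbf{Q}_G(\mathrm{end}(w,A)\,t_{\mathrm{down}(w,A)})}].
\]

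Next, I would invoke Kato's $\mathbb{Z}[P]$-module isomorphism $QK_T(G/B)\xrightarrow{\sim}K_T'(\mathbf{Q}_G)$ recalled just above Theorem~\ref{qkchev}. Under this isomorphism, tensor product by $[\mathcal{O}_{\mathbf{Q}_G}(w_\circ\varpi_k)]$ corresponds to quantum multiplication by $\mathcal{O}_{G/B}(-\varpi_k)$, and $\mathbf{e}^\mu[\mathcal{O}^v]Q^\xi\mapsto\mathbf{e}^{-\mu}[\mathcal{O}_{\mathbf{Q}_G(vt_\xi)}]$. Applying the inverse isomorphism to both sides of the displayed identity therefore yields
\[
\mathcal{O}_{G/B}(-\varpi_k)\ast[\mathcal{O}^w]
=\sum_{A\in\mathcal{A}(w,\Gamma(-\varpi_k))}(-1)^{|A|}\mathbf{e}^{-\mathrm{wt}(w,A)}\,Q^{\mathrm{down}(w,A)}\,[\mathcal{O}^{\mathrm{end}(w,A)}].
\]

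Finally, I would multiply through by $-\mathbf{e}^{-\varpi_k}$, add $[\mathcal{O}^w]$, and appeal to the identity $[\mathcal{O}^{s_k}]=1-\mathbf{e}^{-\varpi_k}[\mathcal{O}_{G/B}(-\varpi_k)]$ to express the left-hand side as $[\mathcal{O}^{s_k}]\ast[\mathcal{O}^w]$. The contribution of $A=\emptyset$ must be separated out: using the definitions, $\mathrm{wt}(w,\emptyset)=-w\varpi_k$, $\mathrm{end}(w,\emptyset)=w$, and $\mathrm{down}(w,\emptyset)=0$, so the empty-subset term combines with the leading $[\mathcal{O}^w]$ to produce the prefactor $(1-\mathbf{e}^{w(\varpi_k)-\varpi_k})[\mathcal{O}^w]$, exactly as in \eqref{qkchev-f}. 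All remaining nonempty subsets acquire an overall sign $(-1)^{|A|-1}$, matching the stated formula.

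The bulk of this proof is bookkeeping; there is no genuine new difficulty, since the hard combinatorial and geometric work has already been done in Theorem~\ref{genchev} (which in turn rests on Theorems~\ref{chevdomqam} and~\ref{chevantidomqam}) and in Kato's isomorphism. The only point requiring modest care is to verify the sign and exponential conventions in the dictionary between $QK_T(G/B)$ and $K_T'(\mathbf{Q}_G)$, so that the weight $\mathbf{e}^{\mathrm{wt}(w,A)}$ becomes $\mathbf{e}^{-\mathrm{wt}(w,A)}$ after transport and then the multiplication by $-\mathbf{e}^{-\varpi_k}$ produces precisely $\mathbf{e}^{-\varpi_k-\mathrm{wt}(w,A)}$ with the expected sign $(-1)^{|A|-1}$.
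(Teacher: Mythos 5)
Your proposal is correct and follows the same logical structure as the paper's argument: specialize the anti-dominant Chevalley formula for $K_{T\times\mathbb{C}^*}(\QQ)$ at $q=1$, transport the identity to $QK_T(G/B)$ through Kato's $\mathbb{Z}[P]$-module isomorphism (using the dictionary $\mathbf{e}^\mu[\mathcal{O}^v]Q^\xi \leftrightarrow \mathbf{e}^{-\mu}\OQG{vt_\xi}$ and the compatibility of line-bundle twists), and then use $[\mathcal{O}^{s_k}]=1-\mathbf{e}^{-\varpi_k}[\mathcal{O}_{G/B}(-\varpi_k)]$ to rewrite the result. The only cosmetic difference is that you invoke Theorem~\ref{genchev} directly, whereas the paper references Theorem~\ref{chevantidomqam}; since the latter is stated only for the lex $(-\varpi_k)$-chain while the theorem asserts the formula for an arbitrary reduced chain, your route is arguably the cleaner one (the paper implicitly relies on chain-independence from the proof of Theorem~\ref{genchev} anyway). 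Your reductions of $\overline{\mathrm{Par}(-\varpi_k)}$ and of $n(A)=|A|$, the evaluation of the $A=\emptyset$ term, and the sign bookkeeping are all correct.
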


\begin{rema}{\rm 
The non-equivariant version of \eqref{qkchev-f} (obtained by setting all equivariant coefficients $\mathbf{e}^\gamma$ to 1) was conjecturally stated in a slightly different form as \cite[Conjecture~17.1]{LP}, which we now explain. The {\em quantum Bruhat operators} defined in~\cite{BFP} were used. These are operators $Q_\beta$ indexed by positive roots $\beta$, which are defined on the group algebra of the Weyl group $W$ over $\bZ[Q]$; the action of $Q_\beta$ on $w\in W$ corresponds to the edge $w \xrightarrow{\beta} ws_{\beta}$ of $\QB(W)$ (if we do not have this edge in $\QB(W)$, then we define $Q_\beta(w):=0$). Let the reduced $(-\varpi_k)$-chain in Theorem~\ref{qkchev} be $(\beta_1,\ldots,\beta_m)$, and note that its reverse is a reduced $\varpi_k$-chain. The formula in  \cite[Conjecture~17.1]{LP} was expressed via the action of the operator 
\[1-(1-Q_{\beta_m})\cdots(1-Q_{\beta_1})\,.\]
By expanding the above product and acting on $w$, we obtain an alternating sum of $Q_{\beta_{j_s}}\cdots Q_{\beta_{j_1}}(w)$ for $w$-admissible subsets $\{j_1<\cdots<j_s\}$ in ${\mathcal A}(w,\Gamma(-\varpi_k))\setminus\{\emptyset\}$. This gives precisely the non-equivariant version of~\eqref{qkchev-f}.}
\end{rema}

Let us now turn to the type $A_{n-1}$ flag manifold $Fl_{n} := SL_{n}/B$ and its (non-equivariant) quantum $K$-theory $QK(Fl_{n}) = K(Fl_{n}) \otimes \mathbb{Z}[\![Q]\!]$, where $\mathbb{Z}[\![Q]\!] = \mathbb{Z}[\![Q_1, \ldots, Q_{n-1}]\!]$. In~\cite{lamqgp}, the first author and Maeno defined the so-called {\em quantum Grothendieck polynomials}, denoted by $\mathfrak{G}^{Q}_{w}$ for $w \in S_{n}$; the quantum Grothendieck polynomial $\mathfrak{G}_{w}^{Q}$ is defined to be the image of the classical Grothendieck polynomial $\mathfrak{G}_{w}$ under the $K$-theoretic quantization map $\widehat{Q}$ for each $w \in S_{n}$.
According to the Monk-type multiplication formula (i.e., \cite[Theorem~6.4]{lamqgp}) for quantum Grothendieck polynomials, whose proof is based on intricate combinatorics, these polynomials multiply precisely as stated by (the non-equivariant version of) the quantum $K$-Chevalley formula~\eqref{qkchev-f}; 
\revise{note that in the ordinary non-equivariant $K$-theory $K(Fl_{n})$, the (opposite) Schubert class $[\mathcal{O}^{w}]$ is identical to the class of the structure sheaf of the Schubert variety $X_{w_{\circ} w} \subset Fl_{n}$ of codimension $\ell(w)$ for each $w \in W = S_{n}$.} 

Let us add an explanation about the coincidence between our quantum $K$-Chevalley formula~\eqref{qkchev-f} for (opposite) Schubert classes and the Monk-type multiplication formula for quantum Grothendieck polynomials. Note first that the order in which the transpositions are applied in \cite[Theorem~6.4]{lamqgp} is precisely the one given by the reduced $(-\varpi_k)$-chain $\Gamma'(-\varpi_k)$ defined in Section~\ref{sec:qk-coeff-type-A}, i.e., the reverse of the chain~\eqref{omegak-chain}. Having observed this, the difference between the two formulas consists of the fact that the former is based on the Weyl group $S_n$, while the latter is based on the infinite symmetric group $S_\infty$. We address this difference below.

In view of the conjectural presentation (cf.~\cite[Theorem~3.10]{lamqgp}) of $QK(Fl_{n})$ proposed by Kirillov and Maeno, we consider the quotient ring $\bZ[\![Q]\!][x]/\widehat{I}^{Q}_{n}$; here $\bZ[\![Q]\!][x]$ denotes $\bZ[\![Q_1, \ldots, Q_{n-1}]\!][x_1, \ldots, x_n]$, and $\widehat{I}^{Q}_{n}$ is the ideal of $\bZ[\![Q]\!][x]$ generated by the specialization at $Q_{n} = 0$ of the images $\widehat{E}^{n}_{k}$ under the $K$-theoretic quantization map $\widehat{Q}$ of the elementary symmetric polynomials $e^{n}_{k}$, $1 \leq k \leq n$, where $e^{n}_{k}$ denotes the elementary symmetric polynomial of degree $k$ in the variables $x_1, \ldots, x_n$ (for details, see \cite[Section~3]{lamqgp}).
Note that our quantum $K$-Chevalley formula is in terms of the quantum Bruhat graph $\QB(S_{n})$ on the $n$-th symmetric group $S_{n}$, while the Monk-type multiplication formula is in terms of the quantum Bruhat graph $\QB(S_{\infty})$ on the infinite symmetric group $S_{\infty} = \bigcup_{m=1}^{\infty} S_{m}$. 
Hence, in order to prove the coincidence between them, we need to show that if $w \in S_{n+1}$ but $w \notin S_{n}$, then the associated quantum Grothendieck polynomial $\mathfrak{G}^{Q}_{w} = \widehat{Q}(\mathfrak{G}_{w})$ lies in the (defining) ideal $\widehat{I}^{Q}_{n}$ for the quotient ring $\bZ[\![Q]\!][x]/\widehat{I}^{Q}_{n}$; 
recall that $\mathfrak{G}^{Q}_{w} \in \bZ[Q_1, \ldots, Q_{n}][x_1, \ldots x_{n}] \subset \bZ[\![Q_1, \ldots, Q_{n}]\!][x_{1}, \ldots, x_{n}]$ for $w \in S_{n+1}$. 
For this purpose, it suffices to show that if $w \in S_{n+1}$ but $w \notin S_{n}$, then $\mathfrak{G}^{Q}_{w} = \widehat{Q}(\mathfrak{G}_{w})$ lies in the ideal of $\bZ[\![Q_1, \ldots, Q_{n}]\!][x_1, \ldots, x_{n}]$ generated by the images $\widehat{E}^{n}_{k} = \widehat{Q}(e^{n}_{k})$, $1 \leq k \leq n$. 
This can be shown by an argument which is similar to one for quantum Schubert polynomials in the proof of \cite[Lemma~5.7]{fgpqsp}, 
but which is based on \cite[Theorem~5.3]{lamqgp} instead of \cite[Proposition~5.4]{fgpqsp} (we refer the reader to \cite[Appendix~B]{lamqgp} for a proof in the torus-equivariant case); 
we also use the fact that for $w \in S_{n+1} \setminus S_{n}$, the associated classical Grothendieck polynomial $\mathfrak{G}_{w}$ lies in the ideal $I_{n}$ of $\mathbb{Z}[x] := \mathbb{Z}[x_1, \ldots, x_n]$ generated by the elementary symmetric polynomials $e^{n}_{k}$, $1 \leq k \leq n$, which follows since the ordinary $K$-theory $K(Fl_{n})$ of $Fl_{n}$ has a presentation of the form $\mathbb{Z}[x]/I_{n}$, and the classical Grothendieck polynomial $\mathfrak{G}_{w}$ represents the (opposite) Schubert class $[{\mathcal O}^{w}]$ in $K(Fl_{n})$ for each $w \in S_{n}$ under this presentation. 

We are now ready to state our main result of this paper, which settles the main conjecture (i.e., Conjecture~7.1) in \cite{lamqgp}.

\begin{thm}\label{qgroth} For each $w \in S_{n}$, the quantum Grothendieck polynomial $\mathfrak{G}^{Q}_{w}$ represents the (opposite) Schubert class $[{\mathcal O}^w]$ 
in $QK(Fl_{n}) = K(Fl_{n}) \otimes \mathbb{Z}[\![Q_{1}, \ldots, Q_{n-1}]\!]$. 
\end{thm}

\begin{proof}[Proof of Theorem~{\rm \ref{qgroth}}] 
We set $\bZ[Q] = \bZ[Q_{1}, \ldots, Q_{n-1}]$, $\bZ[\![Q]\!] = \bZ[\![Q_{1}, \ldots, Q_{n-1}]\!]$, $\bZ[Q]_{\mathrm{loc}} := \bZ[Q][(1-Q_1)^{-1}, \ldots, (1-Q_{n-1})^{-1}] \subset \bZ[\![Q]\!]$, and $\bZ[Q]_{\mathrm{loc}}[x] := \bZ[Q]_{\mathrm{loc}}[x_1, \ldots, x_n]$. 
Let $(I_{n}^{Q})_{\mathrm{loc}}$ denote the ideal of $\bZ[Q]_{\mathrm{loc}}[x]$ generated by the specialization at $Q_{n} = 0$ of the $\widehat{E}^{n}_{k}$ for $1 \leq k \leq n$. 
We know from \cite[Remark~3.27]{lamqgp} that the residue classes modulo $({I}^{Q}_{n})_{\mathrm{loc}}$ of the quantum Grothendieck polynomials $\mathfrak{G}^{Q}_{w}$, $w \in S_{n}$, form a $\bZ[Q]_{\mathrm{loc}}$-basis of the quotient ring $\bZ[Q]_{\mathrm{loc}}[x]/(I^{Q}_{n})_{\mathrm{loc}}$; we refer the reader to \cite[Appendix~B]{MNS} for a detailed proof of this fact in the torus-equivariant case.
Hence it follows that the residue classes modulo $\widehat{I}^{Q}_{n}$ of the $\mathfrak{G}^{Q}_{w}$, $w \in S_{n}$, form a $\bZ[\![Q]\!]$-basis of the quotient ring $\bZ[\![Q]\!][x]/\widehat{I}^{Q}_{n} \cong \bZ[\![Q]\!] \otimes_{\mathbb{Z}[Q]_{\mathrm{loc}}} (\bZ[Q]_{\mathrm{loc}}[x]/(I^{Q}_{n})_{\mathrm{loc}})$.
Also, we know that the (opposite) Schubert classes $[\mathcal{O}^{w}]$, $w \in S_{n}$, form a $\bZ[\![Q]\!]$-basis of $QK(Fl_{n}) = K(Fl_{n}) \otimes \bZ[\![Q]\!]$. Therefore, we can define a $\bZ[\![Q]\!]$-module isomorphism $\Phi$ from $\bZ[\![Q]\!][x]/\widehat{I}^{Q}_{n}$ to $QK(Fl_{n})$ by: $\Phi(\mathfrak{G}^{Q}_{w} \text{ mod } \widehat{I}^{Q}_{n}) = [\mathcal{O}^{w}]$ for $w \in S_{n}$. 

Here we consider the quotient ring $\bZ[x]/I_{n}$, where $I_{n}$ is, as above, the ideal of $\bZ[x]$ generated by the elementary symmetric polynomials $e^{n}_{k}$, $1 \leq k \leq n$.
We can easily verify by direct computation (the well-known fact) that the quotient ring $\bZ[x]/I_{n}$ is generated as an algebra over $\bZ$ by the residue classes modulo $I_{n}$ of the classical Grothendieck polynomials $\mathfrak{G}_{s_k} = 1 - \prod_{j=1}^{k} (1 - x_j)$ for $1 \leq k \leq n-1$.
Since the specialization at $Q_{1} = \cdots = Q_{n-1} = 0$ of the quantum Grothendieck polynomial $\mathfrak{G}^{Q}_{s_k}$ is identical to the classical Grothendieck polynomial $\mathfrak{G}_{s_k}$ for each $1 \leq k \leq n-1$ and the specialization at $Q_{1} = \cdots = Q_{n-1} = 0$ of $\widehat{E}^{n}_{k}$ is $e^{n}_{k}$ for each $1 \leq k \leq n$ by \cite[Proposition~3.22]{lamqgp}, it follows that the specialization at $Q_{1} = \cdots = Q_{n-1} = 0$ of the quotient ring $\bZ[\![Q]\!][x]/\widehat{I}^{Q}_{n}$ is isomorphic to the quotient ring $\bZ[x]/I_{n}$. 
Also, note that $\bZ[\![Q]\!][x]/\widehat{I}^{Q}_{n}$ is finitely generated as a module over $\bZ[\![Q]\!]$ since it is generated by $\mathfrak{G}^{Q}_{w} \text{ mod } \widehat{I}^{Q}_{n}$ for $w \in S_{n}$, as mentioned above; again we refer the reader to \cite[Appendix~B]{MNS} for the proof of the finite generation in the torus-equivariant case. 
Therefore, we can apply Nakayama's lemma (see \cite[Corollary~4.8]{E}) to deduce that the quotient ring $\bZ[\![Q]\!][x]/\widehat{I}^{Q}_{n}$ is generated as an algebra over $\bZ[\![Q]\!]$, which is a Noetherian integral domain such that the ideal $(Q_{1}, \ldots, Q_{n-1})$ is contained in the Jacobson radical, by the residue classes modulo $\widehat{I}^{Q}_{n}$ of the quantum Grothendieck polynomials $\mathfrak{G}^{Q}_{s_k}$, $1 \leq k \leq n-1$. 
Hence, from the coincidence between the quantum $K$-Chevalley formula (obtained from formula~\eqref{qkchev-f}) for opposite Schubert classes in the non-equivariant $QK(Fl_{n})$ and the Monk-type multiplication formula, together with the property above, for quantum Grothendieck polynomials, we deduce that the $\bZ[\![Q]\!]$-module isomorphism $\Phi$ is, in fact, a $\bZ[\![Q]\!]$-algebra isomorphism such that $\Phi(\mathfrak{G}^{Q}_{w} \text{ mod } \widehat{I}^{Q}_{n}) = [\mathcal{O}^{w}]$ for all $w \in S_{n}$. 
This completes the proof of the theorem.
\end{proof}

\begin{rema} {\rm 
Instead of the complete Noetherian integral domain $\bZ[\![Q]\!]$, we can use the Noetherian integral domain $\bZ[Q]_{\mathrm{loc}}$, which is a localization $S^{-1}(\bZ[Q])$ of $\bZ[Q]$ with respect to the multiplicative set $S := 1 + (Q_1, \ldots, Q_{n-1})$ (cf.~\cite[Appendix~A]{GMSZ}).
We know from \cite[Chapter 3, Exercise 2]{AM} that the ideal $S^{-1} (Q_1, \ldots, Q_{n})$ is contained in the Jacobson radical of $S^{-1}(\bZ[Q]) = \bZ[Q]_{\mathrm{loc}}$. 
Also, as mentioned in the proof above, the residue classes modulo $({I}^{Q}_{n})_{\mathrm{loc}}$ of the quantum Grothendieck polynomials $\mathfrak{G}^{Q}_{w}$, $w \in S_{n}$, form a $\bZ[Q]_{\mathrm{loc}}$-basis of the quotient ring $\bZ[Q]_{\mathrm{loc}}[x]/(I^{Q}_{n})_{\mathrm{loc}}$; in particular, the quotient ring $\bZ[Q]_{\mathrm{loc}}[x]/(I^{Q}_{n})_{\mathrm{loc}}$ is a finitely generated module over $\bZ[Q]_{\mathrm{loc}}$. 
Thus, we can apply Nakayama's lemma to the quotient ring $\bZ[Q]_{\mathrm{loc}}[x]/(I^{Q}_{n})_{\mathrm{loc}}$, and hence the same argument as in the proof above shows that the quotient ring $\bZ[Q]_{\mathrm{loc}}[x]/(I^{Q}_{n})_{\mathrm{loc}}$ is isomorphic to the subalgebra $K(Fl_{n}) \otimes \bZ[Q]_{\mathrm{loc}}$ of $QK(Fl_{n}) = K(Fl_{n}) \otimes \bZ[\![Q]\!]$. 
Here observe that by \cite[Remark on page 110]{AM}, the quotient ring $\bZ[Q]_{\mathrm{loc}}[x]/(I^{Q}_{n})_{\mathrm{loc}}$ can be thought of as a subalgebra of the quotient ring $\bZ[\![Q]\!][x]/\widehat{I}^{Q}_{n}$; in contrast, it is closely related to the finiteness result of Anderson-Chen-Tseng (see \cite[Proposition~9]{ACT}) that $K(Fl_{n}) \otimes \bZ[Q]_{\mathrm{loc}}$ is indeed a subalgebra of $QK(Fl_{n}) = K(Fl_{n}) \otimes \bZ[\![Q]\!]$.} 
\end{rema}

Theorem~\ref{qgroth} leads to an important application of quantum Grothendieck polynomials: computing the structure constants in $QK(Fl_n)$ with respect to the (opposite) Schubert basis. More precisely, the computation reduces to expanding the products of these polynomials in the basis they form. This is achieved by \cite[Algorithm~3.28]{lamqgp}, which can be easily programmed; see also \cite[Example~7.4]{lamqgp}. This application extends the similar one of Schubert polynomials, Grothendieck polynomials, and quantum Schubert polynomials, which was the main motivation for defining these polynomials. 

\subsection{The type $A$ quantum $K$-Chevalley coefficients}\label{sec:qk-coeff-type-A}

This section refers entirely to type $A_{n-1}$, more precisely to $QK(Fl_{n})$. Given a {\em degree} $d=(d_1,\ldots, d_{n-1})$, let $N_{s_k,w}^{v,d}$ be the coefficient of $Q_1^{d_1}\cdots Q_{n-1}^{d_{n-1}}[{\mathcal O}^v]$ in the expansion of $[{\mathcal O}^{s_k}]\cdot [{\mathcal O}^w]$, for $k\in I=\{1,\ldots,n-1\}$. Based on Theorem~\ref{qkchev} and results in \cite{lenfmp}, we describe more explicitly the quantum $K$-Chevalley coefficients $N_{s_k,w}^{v,d}$, where the index $k$ is fixed in this subsection. We need more background and notation. 

We start by recalling an explicit description of the edges of the quantum Bruhat graph on the Weyl group $W$ of type $A_{n-1}$, namely the symmetric group $S_{n}$. The permutations $w\in S_{n}$ are written in one-line notation $w=w(1)\cdots w(n)$. For simplicity, we use the same notation $(i,j)$ with $1\le i<j\le n$ for the positive root $\alpha_{ij}=\varepsilon_i-\varepsilon_j$ and the reflection $s_{\alpha_{ij}}$, which is the transposition $t_{ij}$ of $i$ and $j$; in particular, we write $w\cdot(i,j)$ for $wt_{ij}$, where $w\in S_n$. We need the circular order $\prec_i$ on $[n]:=\{1,\ldots,n\}$ starting at $i$, namely $i\prec_i i+1\prec_i\cdots \prec_i n\prec_i 1\prec_i\cdots\prec_i i-1$. It is convenient to think of this order in terms of the numbers $1,\ldots,n$ arranged clockwise on a circle. We make the convention that, whenever we write $a\prec b\prec c\prec\cdots$, we refer to the circular order $\prec=\prec_a$. We also consider the reverse circular order $\prec_i^r$ starting at $i$, namely $i\prec_i^r i-1\prec_i^r\cdots \prec_i^r 1\prec_i^r n\prec_i^r\cdots\prec_i^r i+1$, and use the same conventions.
	  
\begin{prop}[\cite{lenfmp}]\label{prop:quantum_bruhat_order_type_A}
For $w\in S_n$ and $1\leq i<j\leq n$, we have an edge $w \stackrel{(i,j)}{\longrightarrow} w\cdot(i,j)$ if and only if 
there is no $l$ such that $i<l<j$ and $w(i) \prec w(l) \prec w(j)$.
\end{prop}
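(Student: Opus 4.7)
The plan is a direct, elementary argument in type $A_{n-1}$ that uses only the definition of the quantum Bruhat graph together with explicit formulas for $\ell$ and $\pair{\rho}{\alpha^\vee}$ in $S_n$. By definition, an edge $w \edge{\alpha_{ij}} w\cdot(i,j)$ exists iff either $\ell(w\cdot(i,j)) - \ell(w) = 1$ (Bruhat case) or $\ell(w\cdot(i,j)) - \ell(w) = 1 - 2\pair{\rho}{\alpha_{ij}^\vee}$ (quantum case). In type $A_{n-1}$ we have $\pair{\rho}{\alpha_{ij}^\vee} = j-i$ and $\ell(w) = \inv(w)$, so the quantum condition reads $\inv(w\cdot(i,j)) - \inv(w) = 1 - 2(j-i)$. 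Thus the proof reduces to computing the change in the number of inversions when the values at positions $i$ and $j$ are swapped, and matching the result against the circular-order condition.

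First I would record that $w\cdot(i,j)$ differs from $w$ only in the values at positions $i$ and $j$, which are interchanged. Hence the contribution to $\inv(w\cdot(i,j)) - \inv(w)$ from the pair of positions $(i,j)$ itself is $+1$ if $w(i)<w(j)$ and $-1$ if $w(i)>w(j)$, while a position $l$ with $l<i$ or $l>j$ contributes $0$. So the entire change is concentrated on the middle positions $i<l<j$, and for each such $l$ the contribution depends only on the relative order of $w(i),w(l),w(j)$ in $\{1,\dots,n\}$.

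Next I would split into the two cases according to the sign of $w(j)-w(i)$ and tabulate the six possibilities for the relative position of $w(l)$:
\begin{itemize}
\item If $w(i)<w(j)$, a case check shows that each $l$ with $w(i)<w(l)<w(j)$ contributes $+2$, and all other $l$'s contribute $0$. Hence
\[ \inv(w\cdot(i,j)) - \inv(w) = 1 + 2\,\bigl|\{\,l : i<l<j,\ w(i)<w(l)<w(j)\,\}\bigr|. \]
\item If $w(i)>w(j)$, each $l$ with $w(j)<w(l)<w(i)$ contributes $-2$, and all other $l$'s contribute $0$. Hence
\[ \inv(w\cdot(i,j)) - \inv(w) = -1 - 2\,\bigl|\{\,l : i<l<j,\ w(j)<w(l)<w(i)\,\}\bigr|. \]
\end{itemize}

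Finally I would match these formulas with the two edge conditions. In the first case the Bruhat condition $+1$ is equivalent to the absence of $l$ with $i<l<j$ and $w(i)<w(l)<w(j)$, and the quantum condition $1-2(j-i)$ is unattainable (the change is non-negative). In the second case the quantum condition forces $|\{l\}| = j-i-1$, i.e., \emph{every} intermediate $l$ must satisfy $w(j)<w(l)<w(i)$, and the Bruhat condition is unattainable. To unify these into the stated circular-order criterion, I would observe that $w(i) \prec_{w(i)} w(l) \prec_{w(i)} w(j)$ translates, by unwinding the definition of $\prec_{w(i)}$, to:
\[ \text{$w(i)<w(l)<w(j)$ if $w(i)<w(j)$, \quad and \quad $w(l)>w(i)$ or $w(l)<w(j)$ if $w(i)>w(j)$.} \]
In case 1 the obstruction reads exactly as above. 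In case 2, the obstruction "some $l$ with $w(l)>w(i)$ or $w(l)<w(j)$" is precisely the failure of "all $l$ satisfy $w(j)<w(l)<w(i)$"; so the absence of such an $l$ is equivalent to the quantum condition. Combining both cases gives the proposition.

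The argument is essentially a finite bookkeeping exercise; the only mild obstacle is making sure that the "wrap-around" behaviour of $\prec_{w(i)}$ in the $w(i)>w(j)$ case is handled correctly, which is why I would explicitly write out the translation of the circular order into linear inequalities before matching it to the inversion count.
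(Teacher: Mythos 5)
Your proof is correct. Since the paper simply cites this from \cite{lenfmp} rather than proving it, there is no internal proof to compare against; your self-contained argument is the natural one. The inversion-count formulas are right: for $i<l<j$, the pair $(i,l)$ and $(l,j)$ together contribute $+2$ to $\inv(w\cdot(i,j))-\inv(w)$ exactly when $w(i)<w(l)<w(j)$, contribute $-2$ exactly when $w(j)<w(l)<w(i)$, and $0$ otherwise, while positions outside $[i,j]$ contribute nothing; together with $\pair{\rho}{\alpha_{ij}^\vee}=j-i$ this gives precisely the two dichotomies you state. Your unwinding of $\prec_{w(i)}$ into linear inequalities is also correct: since $w(i)$ is the minimum of the circular order $\prec_{w(i)}$, the condition $w(i)\prec w(l)\prec w(j)$ reduces to $w(i)<w(l)<w(j)$ when $w(i)<w(j)$, and to $w(l)>w(i)$ or $w(l)<w(j)$ when $w(i)>w(j)$, which is exactly the negation of $w(j)<w(l)<w(i)$. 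The Bruhat case then requires the obstruction set to be empty and the quantum case requires it to have full size $j-i-1$, and both match the circular criterion.
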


It is proved in \cite[Corollary~15.4]{LP} that, given our fixed $k\in I$,
we have the following reduced $\varpi_k$-chain $\Gamma(\varpi_k)$: 
\begin{equation*}\begin{array}{lllll}
(&\!\!\!\!(k,k+1),&(k,k+2),&\ldots,&(k,n)\,,\\
&\!\!\!\!(k-1,k+1),&(k-1,k+2),&\ldots,&(k-1,n)\,,\\
&&&\ldots\\
&\!\!\!\!(1,k+1),&(1,k+2),&\ldots,&{(1,n)}\,\,)\,.
\end{array}\end{equation*}
Alternatively, we can use the  following reduced $\varpi_k$-chain $\Gamma'(\varpi_k)$: 
\begin{equation}\label{omegak-chain}\begin{array}{lllll}
(&\!\!\!\!(k,k+1),&(k-1,k+1),&\ldots,&(1,k+1)\,,\\
&\!\!\!\!(k,k+2),&(k-1,k+2),&\ldots,&(1,k+2)\,,\\
&&&\ldots\\
&\!\!\!\!(k,n),&(k-1,n),&\ldots,&{(1,n)}\,\,)\,.
\end{array}\end{equation}
We will also need a reduced $(-\varpi_k)$-chain $\Gamma(-\varpi_k)$, and we choose it to be just the reverse of $\Gamma(\varpi_k)$. Similarly, we define  another reduced $(-\varpi_k)$-chain $\Gamma'(-\varpi_k)$ as the reverse of $\Gamma'(\varpi_k)$.

Given $v,w\in S_n$, we write $v \smash{{}^\rightarrow_\rightarrow} w$ whenever there is a path from $v$ to $w$  in $\QB(S_n)$ with edges labeled by a subsequence of $\Gamma(\varpi_k)$. We also write $v\smash{{}^\rightarrow_\leftarrow} w$ (or $w\smash{{}^\leftarrow_\rightarrow} v$) whenever there is a path from $w$ to $v$  in $\QB(S_n)$ with edges labeled by a subsequence of $\Gamma(-\varpi_k)$.

We consider the following conditions on a pair $(v,w)$ of permutations in $S_n$; the first two appeared in~\cite[Section~4.1]{lenfmp}.

\noindent {\em Condition} A1. For any pair of indices $1\le i<j\le k$, both statements below are false: 
\begin{equation*}v(i)=w(j)\,,\;\;\;\;\;\;\;\;v(i)\prec w(j)\prec w(i)\,.\end{equation*}

\noindent {\em Condition} A2. For every index $1\le i\le k$, we have
\[w(i)=\min\,\{w(j) \mid i\le j\le k\}\,,\]
where the minimum is taken with respect to the circular order $\prec_{v(i)}$ on $[n]$ starting at $v(i)$.

\noindent {\em Condition} A$1'$. For any pair of indices $n\ge i>j\ge k+1$, both statements below are false: 
\begin{equation*}v(i)=w(j)\,,\;\;\;\;\;\;\;\;v(i)\prec^r w(j)\prec^r w(i)\,.\end{equation*}

\noindent {\em Condition} A$2'$. For every index $n\ge i\ge k+1$, we have
\[w(i)=\min\,\{w(j) \mid i\ge j\ge k+1\}\,,\]
where the minimum is taken with respect to the reverse circular order $\prec^r_{v(i)}$ on $[n]$ starting at $v(i)$.

 It is clear that Conditions~A1 and A2, respectively A$1'$ and A$2'$, are equivalent. We also consider similar conditions obtained by swapping the orders $\prec_{\cdot}$ and $\prec^r_{\cdot}$, which we label B1, B2, B$1'$, B$2'$, respectively.

\begin{prop}\label{ord-path} We have $v \smash{{}^\rightarrow_\rightarrow} w$ if and only if the pair $(v,w)$ satisfies Conditions~{\rm A1} and {\rm A$1'$}. Moreover, the corresponding path $v=v_0,\,v_1,\,\ldots,\,v_m=w$ in $\QB(S_n)$ (with edges labeled by a subsequence of $\Gamma(\varpi_k)$) is unique, and we have
\begin{align*}&v_0(i)\preceq v_1(i)\preceq\cdots\preceq v_m(i)\;\;\;\;\mbox{ for $\;1\le i\le k\,$,}\\
& v_0(i)\preceq^r v_1(i)\preceq^r\cdots\preceq^r v_m(i)\;\;\;\;\mbox{ for $\;n\ge i\ge k+1\,.$}\end{align*}
\end{prop}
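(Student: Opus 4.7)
The plan is to leverage the existing characterization in \cite[Section~4.1]{lenfmp} of paths in $\QB(S_n)$ with edges of the form $(a,b)$, $1\le a\le k<b\le n$, in terms of Conditions~A1/A2, and to extend it to the specific ordering imposed by the reduced $\varpi_k$-chain $\Gamma(\varpi_k)$. The new ingredient is Condition~A$1'$, which captures the constraint arising from the second index $b$ being required to increase within each block of fixed first index $a$. The equivalences A1$\,\Leftrightarrow\,$A2 and A$1'\,\Leftrightarrow\,$A$2'$ allow me to freely switch between the descriptive and algorithmic forms of these conditions.

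For the ``only if'' direction I would trace a given path $v=v_0,v_1,\ldots,v_m=w$ whose edge labels $(a_1,b_1),\ldots,(a_m,b_m)$ appear in the order dictated by $\Gamma(\varpi_k)$. Because the first indices $a_i$ weakly decrease from $k$ down to $1$, each position $a\in\{1,\ldots,k\}$ is touched only during a contiguous block of steps (if at all), during which its value evolves by successive swaps with values at positions $b_i>k$ with $b_i$ strictly increasing. Applying Proposition~\ref{prop:quantum_bruhat_order_type_A} at each swap, I can conclude that the value sitting at position $a$ moves forward in the circular order $\prec_{v(a)}$; this gives the first monotonicity claim as well as Condition~A2 (hence A1). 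Dually, for a position $b\in\{k+1,\ldots,n\}$, the edges affecting it occur at nonconsecutive steps but with decreasing first indices, yielding Condition~A$2'$ (hence A$1'$) together with the reverse-circular monotonicity.

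For the ``if'' direction and uniqueness, I would construct the path greedily in the order of $\Gamma(\varpi_k)$: starting from $v$, process the blocks with first index $a=k,k-1,\ldots,1$ in turn, and within each block the second indices $b=k+1,\ldots,n$ in turn. At each step, whether to include the edge $(a,b)$ is forced by the final target $w$: Condition~A2 applied to the current intermediate permutation identifies the value that must eventually sit at position $a$, which in turn identifies the sequence of swaps with positions $b>k$. Proposition~\ref{prop:quantum_bruhat_order_type_A} certifies each swap as a valid edge of $\QB(S_n)$, since the absence of an obstructing index $l$ with $a<l<b$ and $v_i(a)\prec v_i(l)\prec v_i(b)$ is precisely ensured by Conditions~A2 (for $l\le k$) and A$2'$ (for $l>k$) evaluated at the intermediate stage. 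Uniqueness is then automatic because every choice is forced.

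The hard part will be verifying that the greedy construction remains globally consistent — specifically, that consuming the block with first index $a$ leaves the subsequent blocks in a state where Conditions~A2 and A$2'$ continue to apply to the remaining data. This is where Condition~A$1'$ plays its essential role: it controls how the intermediate permutations evolve at positions $>k$ so that the edge criterion of Proposition~\ref{prop:quantum_bruhat_order_type_A} is preserved for all later swaps. Keeping careful track of the interaction between the left half ($\le k$) and the right half ($>k$) across the whole path is where the technical care will be concentrated.
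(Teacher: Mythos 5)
Your proposal takes a genuinely different route from the paper's. The paper is short: it treats the left half (positions $\le k$) by citing \cite[Lemma~4.8]{lenfmp} as a black box, giving both Condition~A1 and the monotonicity $v_0(i)\preceq\cdots\preceq v_m(i)$ for $i\le k$ as well as the uniqueness of a path $v=v_0,\ldots,v_m$ in $\QB(S_n)$ with $v_m(i)=w(i)$ for $i\le k$; it then handles the right half (positions $>k$) by observing that a path labeled by a subsequence of $\Gamma(\varpi_k)$ can be transformed purely by commutations into one labeled by a subsequence of $\Gamma'(\varpi_k)$, which makes the roles of $i\le k$ and $i>k$ symmetric. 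Finally it pins down $v_m=w$ by noting that $(v,v_m)$ satisfies A$1'$, which forces $v_m=w$ within the coset. You instead try to reconstruct everything from scratch by a greedy forced construction, tracking both halves simultaneously.

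The difficulty is that you yourself flag the crux as an unresolved issue: you need to show that each intermediate permutation $v_i$ continues to satisfy Conditions~A2 and A$2'$ relative to the remaining target data, so that the obstruction test in Proposition~\ref{prop:quantum_bruhat_order_type_A} keeps coming out false. This is exactly the technical content of \cite[Lemma~4.8(2)]{lenfmp} on the left half, and the paper does not reprove it. Without that consistency lemma, neither the validity of the greedy step nor the monotonicity nor the uniqueness is established, and your argument also never closes the loop on why the constructed path, which by design only matches $w$ on positions $\le k$ step by step, actually terminates at $w$ itself rather than at some other element of the same parabolic coset. So the proposal is a plausible plan and a different decomposition of the work, but as written it deposits the entire difficulty into a deferred ``hard part'' that it does not carry out; it would need either to reprove the analogue of \cite[Lemma~4.8]{lenfmp} or to import it explicitly. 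The paper's use of the $\Gamma(\varpi_k)\leftrightarrow\Gamma'(\varpi_k)$ commutation trick is also worth adopting, since it lets you get Condition~A$1'$ and the right-half monotonicity essentially for free by symmetry rather than by a separate direct analysis.
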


\begin{proof} Given  $v \smash{{}^\rightarrow_\rightarrow} w$, the pair $(v,w)$ satisfies Condition~{\rm A1} by~\cite[Lemma~4.8~(1)]{lenfmp}. On another hand, the given path from $v$ to $w$ in $\QB(S_n)$ with edges labeled by a subsequence of $\Gamma(\varpi_k)$ can be transformed into a similar path with edges labeled by a subsequence of $\Gamma'(\varpi_k)$. Indeed, by comparing the structures of $\Gamma(\varpi_k)$ and $\Gamma'(\varpi_k)$, we can see that the first sequence of roots can be transformed into the second one by repeatedly swapping successive orthogonal roots; this implies that we can realize the mentioned transformation of paths in $\QB(S_n)$ by swapping successive commuting transpositions. We will prove Condition~A$1'$ based on the new path. The first part is immediate. Now assume for contradiction that $v(i)\prec^r w(j)\prec^r w(i)$ for $n\ge i>j\ge k+1$. Examining the sequence of transpositions that involve position $i$, we can see that one of them violates the criterion in Proposition~\ref{prop:quantum_bruhat_order_type_A}.

Now assume that the pair $(v,w)$ satisfies Conditions~{\rm A1} and {\rm A$1'$}. By \cite[Lemma~4.8~(2)]{lenfmp}, there exists a unique path $v=v_0,\,v_1,\,\ldots,\,v_m$ in $\QB(S_n)$ with $v_m(i)=w(i)$ for $i=1,\ldots,k$. Moreover, the stated property of the entries $v_j(i)$ for a fixed $i\in\{1,\ldots,k\}$ is part of the same lemma. Meanwhile, the case of $i\in\{k+1,\ldots,n\}$ is proved in a similar way, by using the the path labeled by a subsequence of $\Gamma'(\varpi_k)$ which is obtained from the above one. Finally, based on the first part of this proof, the pair $(v,v_m)$ satisfies Condition A$1'$, which further implies that $v_m=w$.
\end{proof}

\begin{rema}\label{two-alg}{\rm (1) Fix $v\in W=S_n$ and a representative $\sigma$ of a parabolic coset modulo $W_{I\setminus \{k\}}$. It is easy to see that there is a unique permutation $w\in\sigma W_{I\setminus \{k\}}$ such that the pair $(v,w)$ satisfies Conditions~A1 and {\rm A$1'$}. Indeed, the equivalent Conditions~A2 and A$2'$ lead to an algorithm which suitably reorders the entries $\sigma(1),\,\ldots,\,\sigma(k)$ and  $\sigma(k+1),\,\ldots,\,\sigma(n)$, respectively. More precisely, we iterate the construction of $w(i)$ given by Condition~A2 for $i=1,\ldots,k$, and the construction given by Condition~A$2'$ for $i=n,\ldots,k+1$. This reordering algorithm is explained in more detail in~\cite{lenfmp}; see Remark~4.5 and Example~4.6 therein.

(2) Given a pair $(v,w)$ which satisfies Conditions~{\rm A1} and {\rm A$1'$}, the construction of the unique path from $v$ to $w$ in Proposition~\ref{ord-path} is given by \cite[Algorithm~4.9]{lenfmp}; this is a greedy type algorithm.
}\end{rema}

We have the following corollary of Proposition~\ref{ord-path}.

\begin{cor}\label{ord-path-rev} We have $v \smash{{}^\rightarrow_\leftarrow} w$ if and only if the pair $(v,w)$ satisfies Conditions~{\rm B1} and {\rm B$1'$}. Moreover, the corresponding path $w=v_m,\,v_{m-1},\,\ldots,\,v_0=v$ in $\QB(S_n)$ (with edges labeled by a subsequence of $\Gamma(-\varpi_k)$) is unique, and we have
\begin{align}&v_0(i)\preceq^r v_1(i)\preceq^r\cdots\preceq^r v_m(i)\;\;\;\;\mbox{ for $\;1\le i\le k\,$,}\label{circ-entries1} \\
&v_0(i)\preceq v_1(i)\preceq\cdots\preceq v_m(i)\;\;\;\;\mbox{ for $\;n\ge i\ge k+1\,.$}\label{circ-entries2}\end{align}
\end{cor}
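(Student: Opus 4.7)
The plan is to mirror the proof of Proposition~\ref{ord-path}, exploiting the fact that $\Gamma(-\varpi_k)$ is by definition the reverse of $\Gamma(\varpi_k)$, and that Conditions~B1, B$1'$ are obtained from A1, A$1'$ by the global swap of the circular orders $\prec_\cdot$ and $\prec^r_\cdot$. As a first setup step, I would introduce $\Gamma'(-\varpi_k)$ as the reverse of $\Gamma'(\varpi_k)$; just as in the dominant case, $\Gamma(-\varpi_k)$ and $\Gamma'(-\varpi_k)$ differ only by commutations of disjoint transpositions, so any path labeled by a subsequence of one can be equivalently described as a path labeled by a subsequence of the other.

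For the ``only if'' direction, I would first apply the symmetric analog of \cite[Lemma~4.8~(1)]{lenfmp}, where the reversal of the chain naturally trades the order $\prec$ for $\prec^r$ in the conclusion, to obtain Condition~B1 from the existence of the path $w = v_m \to \cdots \to v_0 = v$. For Condition~B$1'$, I would re-express the path using $\Gamma'(-\varpi_k)$ and, assuming a hypothetical violation $v(i) \prec w(j) \prec w(i)$ with $n \ge i > j \ge k+1$, isolate a transposition affecting position $i$ that contradicts the edge criterion of Proposition~\ref{prop:quantum_bruhat_order_type_A}; this is precisely the argument used for A$1'$ in the proof of Proposition~\ref{ord-path}, transported through the order swap.

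For the ``if'' direction, assuming B1 and B$1'$, the dual analog of \cite[Lemma~4.8~(2)]{lenfmp} produces, from B1, a unique directed path $w = v_m, v_{m-1}, \ldots, v_0$ with edges labeled by a subsequence of $\Gamma(-\varpi_k)$ and terminating at some element $v_0$ in the same coset as $v$ modulo $W_{I\setminus\{k\}}$; the monotonicity \eqref{circ-entries1} on positions $1, \ldots, k$ with respect to $\prec^r$ is built into the construction. To derive \eqref{circ-entries2}, I would reorganize this path to one labeled by a subsequence of $\Gamma'(-\varpi_k)$ and run the same type of analysis, now on positions $k+1, \ldots, n$ with respect to the order $\prec$. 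Finally, applying the already-established forward implication to the pair $(v, v_0)$ shows that $v_0$ satisfies Condition~B$1'$ with $v$, and hence $v_0 = v$, closing the argument and giving uniqueness.

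The main obstacle will be carefully recording the dual statements of \cite[Lemma~4.8]{lenfmp} in the anti-dominant setting — that is, verifying that reversing $\Gamma(\varpi_k)$ to $\Gamma(-\varpi_k)$ is reflected throughout the combinatorial arguments by the simultaneous swap $\prec \leftrightarrow \prec^r$ on all relevant positions. Since the edge criterion of Proposition~\ref{prop:quantum_bruhat_order_type_A} is manifestly preserved by reversal of the circular order, and since $\Gamma(-\varpi_k)$ is nothing more than a relabeled reverse of $\Gamma(\varpi_k)$, this verification should be routine, and is what turns the result from a proposition into a corollary.
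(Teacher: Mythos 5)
Your proposal re-runs the proof of Proposition~\ref{ord-path} in a ``dualized'' form, whereas the paper derives the corollary from that proposition by an explicit involution, with no new combinatorics. The paper uses $u \mapsto u^\circ := w_\circ u$ on $S_n$, which by \cite[Proposition~4.4.1]{lnsumk1} reverses all quantum-Bruhat-graph edges while keeping their labels, together with $i \mapsto i^\circ := n+1-i$ on $[n]$, which sends $\prec_i$ to $\prec^r_{i^\circ}$ and satisfies $u^\circ(i) = u(i)^\circ$. Since $\Gamma(-\varpi_k)$ is the reverse of $\Gamma(\varpi_k)$, one gets $v \smash{{}^\rightarrow_\leftarrow} w$ if and only if $v^\circ \smash{{}^\rightarrow_\rightarrow} w^\circ$, and Conditions B1, B$1'$ for $(v,w)$ translate under $\circ$ exactly into A1, A$1'$ for $(v^\circ,w^\circ)$; the whole corollary, including the monotonicity assertions \eqref{circ-entries1}--\eqref{circ-entries2}, then falls out of Proposition~\ref{ord-path}. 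Your route instead asks for dual forms of \cite[Lemma~4.8]{lenfmp} and a $\Gamma'(-\varpi_k)$-reorganization argument; this can be made to work, but the justification you give is too loose in one place: the edge criterion of Proposition~\ref{prop:quantum_bruhat_order_type_A} is \emph{not} preserved under the swap $\prec \leftrightarrow \prec^r$ --- reversing the circular order turns the criterion for $u\to u\cdot(i,j)$ into the criterion for the reversed edge $u\cdot(i,j)\to u$. That transformation is exactly what is wanted, and it is precisely what the involution $u\mapsto w_\circ u$ packages; making that involution explicit is what turns this into a one-line corollary rather than a re-proof with dualized lemmas, and it is the idea your write-up stops just short of.
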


\begin{proof} We use the involution on $W$ given by $w\mapsto w^\circ:=w_\circ w$, which maps (quantum) edges of $\QB(W)$ to reverse (quantum) edges with the same labels, cf.~\cite[Proposition~4.4.1]{lnsumk1}. Therefore, we have $v \smash{{}^\rightarrow_\rightarrow} w$ if and only if $v^\circ \smash{{}^\rightarrow_\leftarrow} w^\circ$. Letting $i^\circ:=n+1-i$, we have  $w^\circ(i)=w(i)^\circ$, and we note that the involution $i\mapsto i^\circ$ on $[n]$ maps the order $\prec_i$ to $\prec_{i^\circ}^r$. Therefore, Conditions~A1 and A$1'$ correspond to Conditions~B1 and B$1'$ under this involution. We conclude that the statements of the corollary are translations of those in Proposition~\ref{ord-path}.
\end{proof}

\begin{rema}\label{two-alg-rev}{\rm By analogy with Remark~\ref{two-alg}~(1), Conditions B2 and B$2'$ lead to a corresponding reordering algorithm. Furthermore, there is an algorithm for constructing the unique path in $\QB(S_n)$ in Corollary~\ref{ord-path-rev} which is completely similar to \cite[Algorithm~4.9]{lenfmp}, cf. Remark~\ref{two-alg}~(2).}
\end{rema}

We are now ready to prove the main result of this section, which completely determines the quantum $K$-Chevalley coefficients $N_{s_k,w}^{v,d}$.

\begin{thm}\label{qk-coeff}
For $QK(Fl_{n})$, we always have $N_{s_k,w}^{v,d}\in\{0,\pm 1\}$. More precisely, for every $v$ and parabolic coset $\sigma W_{I\setminus \{k\}}$ not containing $v$, there are unique $d$ and $w\in \sigma W_{I\setminus \{k\}}$ (determined via the algorithms in Remark~{\rm \ref{two-alg-rev}} and {\rm \eqref{qkchev-f}}, cf. also Proposition~{\rm \ref{alg-deg}}) such that $N_{s_k,w}^{v,d}=\pm 1$ (the sign is as in {\rm \eqref{qkchev-f}}). Meanwhile, if $v\in\sigma W_{I\setminus \{k\}}$, then all the mentioned coefficients are $0$.
\end{thm}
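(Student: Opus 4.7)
The plan is to extract the coefficient $N_{s_k,w}^{v,d}$ directly from the Chevalley formula~\eqref{qkchev-f} after non-equivariant specialization, and then to invoke the bijection between admissible subsets and paths in $\QB(S_n)$ that is governed by Corollary~\ref{ord-path-rev}. First I would set every $\mathbf{e}^{\mu}=1$ in~\eqref{qkchev-f}: this kills the leading summand $(1-\mathbf{e}^{w(\varpi_k)-\varpi_k})[\mathcal{O}^w]$ and leaves
\[
[\mathcal{O}^{s_k}]\cdot[\mathcal{O}^w] = \sum_{A\in\mathcal{A}(w,\Gamma(-\varpi_k))\setminus\{\emptyset\}} (-1)^{|A|-1}\,Q^{{\rm down}(w,A)}\,[\mathcal{O}^{{\rm end}(w,A)}]\,.
\]
Hence $N_{s_k,w}^{v,d}$ is the signed count of non-empty $w$-admissible subsets $A$ with ${\rm end}(w,A)=v$ and ${\rm down}(w,A)=\sum_{i} d_i\alpha_i^\vee$. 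By Definition~\ref{def:admissible}, these correspond exactly to directed paths in $\QB(S_n)$ from $w$ to $v$ whose edge labels form a subsequence of $\Gamma(-\varpi_k)$.

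Next I would apply Corollary~\ref{ord-path-rev}: such a path exists precisely when $(v,w)$ satisfies Conditions~B1 and B$1'$, and in that case it is unique. This immediately forces $N_{s_k,w}^{v,d}\in\{0,\pm1\}$, with the sign $(-1)^{|A|-1}$ determined by the length of the unique path and the degree $d$ read off from ${\rm down}(w,A)$ on the associated subset $A$; an explicit formula for this $d$ will be the content of the separate Proposition~\ref{alg-deg}.

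To finish, I would fix $v$ and a coset $\sigma W_{I\setminus\{k\}}$ and apply the reordering algorithm in Remark~\ref{two-alg-rev} (the B2/B$2'$ analog of Remark~\ref{two-alg}~(1)) to produce the \emph{unique} $w\in\sigma W_{I\setminus\{k\}}$ for which $(v,w)$ satisfies B1 and B$1'$. If $v\notin\sigma W_{I\setminus\{k\}}$, then this $w$ differs from $v$, the unique path has positive length, the corresponding admissible subset is non-empty, so $N_{s_k,w}^{v,d}=\pm1$ for exactly this pair $(w,d)$ in the coset and all other pairs give $0$. If instead $v\in\sigma W_{I\setminus\{k\}}$, then the unique representative must be $v$ itself (since $(v,v)$ trivially satisfies B1 and B$1'$), so the associated admissible subset is empty and is excluded from the sum; for any other $w$ in the coset, uniqueness forces $(v,w)$ to fail B1 or B$1'$, hence there are no admissible subsets at all, and $N_{s_k,w}^{v,d}=0$ for every $w\in\sigma W_{I\setminus\{k\}}$ and every $d$.

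The main obstacle is the uniqueness asserted in Remark~\ref{two-alg-rev}, namely that Conditions~B2 and B$2'$ pin down a single coset representative. The cleanest route is to transport Remark~\ref{two-alg}~(1) through the involution $u\mapsto w_\circ u$ used in the proof of Corollary~\ref{ord-path-rev}, since this involution carries the orders $\prec$ to $\prec^r$, and hence Conditions~A to Conditions~B; alternatively one can verify the B-conditions directly by an inductive reordering argument mirroring the one for A in~\cite{lenfmp}.
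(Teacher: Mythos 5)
Your proposal is correct and follows essentially the same route as the paper's proof: pass to the non-equivariant specialization of \eqref{qkchev-f}, identify nonzero coefficients with admissible subsets, invoke the uniqueness in Corollary~\ref{ord-path-rev} together with the reordering algorithm of Remark~\ref{two-alg-rev}, and treat the case $v\in\sigma W_{I\setminus\{k\}}$ by noting that the unique lift is $v$ itself with $A=\emptyset$ excluded from the sum. You spell out a few points the paper leaves implicit (that setting $\mathbf{e}^{\mu}=1$ kills the classical term, and that the uniqueness of the B-lift can be transported through the involution $u\mapsto w_\circ u$), but these are expansions of the same argument rather than a different approach.
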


\begin{proof} Fix $v$ and a parabolic coset $\sigma W_{I\setminus \{k\}}$. By~\eqref{qkchev-f} and Corollary~\ref{ord-path-rev}, a necessary condition to have $N_{s_k,w}^{v,d}\ne0$ for $w\in \sigma W_{I\setminus \{k\}}$ is that $w$ satisfies Conditions B1 and B$1'$. The unique such $w$ in $\sigma W_{I\setminus \{k\}}$ is constructed via the reordering algorithm mentioned in Remark~\ref{two-alg-rev}. Using Corollary~\ref{ord-path-rev} again, we know that there is a unique $w$-admissible subset $A\in{\mathcal A}(w,\Gamma(-\varpi_k))$ with ${\rm end}(w,A)=v$; this can be constructed via the second algorithm mentioned in  Remark~\ref{two-alg-rev}. If $v\in\sigma W_{I\setminus \{k\}}$, then we have $w=v$ and $A=\emptyset$, but the corresponding term does not appear in the right-hand side of the non-equivariant version of~\eqref{qkchev-f}. Otherwise, the corresponding degree $d=(d_1,\ldots,d_{n-1})$ and the sign of  $N_{s_k,w}^{v,d}=\pm 1$ are calculated based on~\eqref{qkchev-f}.
\end{proof}

We will now show that the degree $d$ in Theorem~\ref{qk-coeff} can be determined based on $v$ and $w$ only, that is, without constructing the respective path in $\QB(S_n)$ from $w$ to $v$. We use the notation $|\,\cdot\,|$ to indicate the cardinality of a set.

\begin{prop}\label{alg-deg} Given a pair $(v,w)$ satisfying Conditions {\rm B1} and {\rm B$1'$}, with $v\ne w$, the unique degree $d=(d_1,\ldots,d_{n-1})$ for which $N_{s_k,w}^{v,d}=\pm 1$ is expressed as follows{\rm :}
\[d_i=\casefour{|\{j \mid j\le i,\:v(j)<w(j)\}|}{i\in\{1,\ldots,k\}\,,}{|\{j \mid j> i,\:v(j)>w(j)\}|}{i\in\{k,\ldots,n-1\}\,.}\]
\end{prop}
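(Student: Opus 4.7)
The plan is to read off $d$ directly from the unique path in $\QB(S_n)$ guaranteed by Corollary~\ref{ord-path-rev}. Write that path as $w=v_m, v_{m-1},\ldots,v_0=v$ and reindex as $u_t:=v_{m-t}$, so the $w$-admissible subset $A$ (of the $(-\varpi_k)$-chain $\Gamma(-\varpi_k)$) corresponds to the path $u_0=w,\,u_1,\,\ldots,\,u_m=v$. Every label of $\Gamma(-\varpi_k)$ is, up to sign, a transposition $(a,b)$ with $1\le a\le k<b\le n$, and in type $A$ we have $|(a,b)|^\vee=\alpha_a^\vee+\alpha_{a+1}^\vee+\cdots+\alpha_{b-1}^\vee$. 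Hence from ${\rm down}(w,A)=\sum_{j\in A^-}|\beta_j|^\vee$ we obtain
\[
d_i \;=\; \#\bigl\{\,\text{quantum edges }(a,b)\text{ in the path with }a\le i<b\,\bigr\}\,.
\]
Since $a\le k<b$ is automatic, this count becomes: for $i\le k$, the number of quantum edges whose left endpoint is $\le i$; for $i\ge k$, the number whose right endpoint is $>i$. The proposition therefore reduces to a position-wise count of quantum edges.

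Fix $i\le k$ and look at $u_0(i),u_1(i),\ldots,u_m(i)$, which by Corollary~\ref{ord-path-rev} is weakly decreasing in the cyclic order $\prec^r_{v(i)}$. Any edge at step $t$ that moves position $i$ uses a reflection $(i,b)$ with $b>k$; Bruhat such an edge forces $u_t(i)>u_{t-1}(i)$ in natural order, quantum such an edge forces $u_t(i)<u_{t-1}(i)$. Encoding $\prec^r_{v(i)}$ by the rank function $\rho_{v(i)}$ (the position of $a$ in the listing $v(i),v(i)-1,\ldots,1,n,\ldots,v(i)+1$), a short case check in the four configurations (both values $\le v(i)$, both $>v(i)$, or crossing $v(i)$ in either direction) shows that Bruhat edges at position $i$ are allowed only when $u_{t-1}(i)$ and $u_t(i)$ lie on the same side of $v(i)$, while quantum edges are allowed only when $u_{t-1}(i)>v(i)\ge u_t(i)$. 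Hence position $i$ participates in at most one quantum edge: after crossing, the value stays $\le v(i)$ and cannot return to $>v(i)$ without violating $\rho$-monotonicity. Such a quantum edge exists iff $w(i)>v(i)$, so $d_i=|\{j\le i:v(j)<w(j)\}|$.

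The case $i\ge k+1$ is dual: the monotonicity in $\prec_{v(i)}$ (where the smallest entry is $v(i)$ and one proceeds clockwise) forces Bruhat edges at position $i$ to decrease $u(i)$ on one side of $v(i)$, and quantum edges to increase $u(i)$ across $v(i)$ from below. The same argument yields a unique quantum edge at position $i$ exactly when $w(i)<v(i)$, so $d_i=|\{j>i:v(j)>w(j)\}|$; the two formulas agree at $i=k$ because each then counts the total number of quantum edges, via their left, respectively right, endpoints. The main obstacle is the case analysis in the middle paragraph: one must rigorously verify that $\rho_{v(i)}$-monotonicity rules out quantum edges with both endpoints on the same side of $v(i)$ and rules out Bruhat edges that cross $v(i)$, as this is precisely what forces uniqueness of the quantum edge per position and pins down the existence criterion $w(i)>v(i)$ (respectively $w(i)<v(i)$).
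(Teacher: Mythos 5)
Your proof is correct and takes essentially the same route as the paper: read $d_i$ off the quantum edges via $|(a,b)|^\vee=\alpha_a^\vee+\cdots+\alpha_{b-1}^\vee$, then use the monotonicity of the $i$-th entry along the path (the displays \eqref{circ-entries1}--\eqref{circ-entries2} in Corollary~\ref{ord-path-rev}) to conclude that each position admits at most one quantum step, and that this step exists precisely when $v(i)<w(i)$ (respectively $v(i)>w(i)$). The paper simply asserts this consequence of the circular-order monotonicity, whereas you carry out the case-check explicitly, which is a welcome expansion rather than a new idea.
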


\begin{proof} Consider $i,j\in\{1,\ldots,k\}$. It follows from Corollary~\ref{ord-path-rev}, and particularly~\eqref{circ-entries1}, that at most one of the roots $(j,l)$ in $\Gamma(-\varpi_k)$ (for $l>k$)  labels a quantum edge in the corresponding path from $w$ to $v$; moreover, this happens if and only if $v(j)<w(j)$. On the other hand, the simple root $\alpha_i=(i,i+1)$ appears in the decomposition of $(j,l)$ if and only if $j\le i$. This gives the formula for $d_i$ with $i\in\{1,\ldots,k\}$. The proof is completely similar for $i\in\{k,\ldots,n-1\}$, based on~\eqref{circ-entries2}.
\end{proof}

\begin{exa}{\rm Consider $v=12534$ in $S_5$, $k=2$, and $\sigma=34125$ in $W^{I\setminus\{2\}}$. The reordering algorithm in Remark~\ref{two-alg-rev} outputs $w=43215\in\sigma W_{I\setminus\{2\}}$. The second algorithm mentioned in Remark~\ref{two-alg-rev} determines the following path from $w$ to $v$ in $\QB(S_5)$; its edges are labeled by a subsequence of $\Gamma(-\varpi_2)$, which corresponds to an admissible subset $A$:
\[w=\tableau{{4}\\{3}\\ \\{2}\\{1}\\{5}}
\begin{array}{c} <\\ \xrightarrow{(1,5)} \end{array} 
\tableau{{5}\\ {3}\\ \\{2}\\ {1}\\{4}} 
\begin{array}{c} >\\ \xrightarrow{(1,4)} \end{array} 
\tableau{{1}\\{3}\\ \\{2}\\{5}\\{4}} 
\begin{array}{c} <\\ \xrightarrow{(2,4)} \end{array} 
\tableau{{1}\\{5}\\ \\{ 2}\\{3}\\{4}}
\begin{array}{c} >\\ \xrightarrow{(2,3)} \end{array} 
\tableau{{1}\\{2}\\ \\{5}\\{3}\\{4}}=v
\,.\]
Thus, we have ${\rm down}(w,A)=(1,4)+(2,3)$, and hence $d=(1,2,1,0)$; in fact, it is easier to determine $d$ based on Proposition~\ref{alg-deg}. Finally, we have $N_{s_2,w}^{v,d}=-1$. 
}
\end{exa}

\begin{rema} {\rm Analogous results to Proposition~\ref{ord-path} and the algorithms in Remark~\ref{two-alg} were given for type $C$ in~\cite{lenfmp}, and for types $B,\,D$ in~\cite{lascmbd}; they were used in connection with affine crystals and Macdonald polynomials. In addition, they can be used to obtain more explicit information about the quantum $K$-Chevalley coefficients in the respective types, by analogy with the above approach in type $A$. 
}\end{rema}

\subsection{Minimum and maximum degrees in type $A$}

Given the expansion of a product of two Schubert classes in quantum cohomology, there is interest in the following questions: are there minimum and maximum degrees, and do the degrees form intervals? These facts were proved to be true for type $A$ Grassmannians, where there is a single quantum variable $Q$ (see \cite{P}). For a partial flag manifold $G/P$, only the existence of a minimum power is known, which was proved to be the smallest degree of a rational curve between general translates of the corresponding Schubert varieties~\cite{bclecq}. Below we address these questions for the expansion of $[{\mathcal O}^{s_k}]\cdot [{\mathcal O}^w]$ in $QK(Fl_n)$, for a fixed $k\in I=\{1,\ldots,n-1\}$. 

We use the notation above; in particular, recall that $v\cdot(i,j)$ stands for $vt_{ij}$, where $v\in S_n$. We consider the set of roots in $\Gamma(-\varpi_k)$, for which we use the same notation, and we denote the corresponding linear order by $<$; in other words, we have  $(1,n)<\cdots<(1,k+1)<\cdots<(k,n)<\cdots<(k,k+1)$. We also consider the following partial order on these roots: $(a,b)\preceq (c,d)$ whenever $c\le a\le k<b\le d$; if $c<a\le k<b<d$, then we write $(a,b)\llcurly (c,d)$. 

Given a permutation $v\in S_n$, we define
\[\Gamma_v:=\{(i,j)\in \Gamma(-\varpi_k)\mid\, v\xrightarrow{(i,j)}v\cdot(i,j)\,\mbox{ is a quantum edge}\}\,.\]
For $A\subseteq\Gamma(-\varpi_k)$, we write $\Gamma_v^A:=\Gamma_v\cap A$. In particular, if 
\[A=\{(i,j)\in\Gamma(-\varpi_k)\mid (i,j)>(a,b)\}\,,\;\;\;\;B=\{(i,j)\in\Gamma(-\varpi_k)\mid (i,j)\llcurly(c,d)\}\,,\]
then we write
\[\Gamma_v^{>(a,b)}:=\Gamma_v^A\,,\;\;\;\;\Gamma_v^{\llcurly (c,d)}:=\Gamma_v^B\,,\;\;\;\;\Gamma_v^{>(a,b),\,\llcurly (c,d)}=\Gamma_v^{A,\llcurly (c,d)}:=\Gamma_v^{A\cap B}\,.\]
We use implicitly the quantum Bruhat graph criterion in Proposition~\ref{prop:quantum_bruhat_order_type_A}.

\begin{lem}\label{lemmm} The sets of roots $\Gamma_v$ and $\Gamma_v^{\llcurly (c,d)}$ have maxima and minima with respect to the partial order $\preceq$. 
\end{lem}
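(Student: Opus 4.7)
The key observation is that the partial order $\preceq$ on the roots of $\Gamma(-\varpi_k)$ is the componentwise order on pairs $(a,b)$ with $1\le a\le k<b\le n$, where the first coordinate is reversed. Thus a maximum of $\Gamma_v$ (if it exists) must be the pair $(a_*,b_*)$ with $a_*=\min\{a\mid(a,b)\in\Gamma_v\}$ and $b_*=\max\{b\mid(a,b)\in\Gamma_v\}$, while a minimum must be the pair $(\max a,\min b)$. So it suffices to establish the following closure property of $\Gamma_v$: \emph{if $(a_1,b_1),(a_2,b_2)\in\Gamma_v$ satisfy $a_1\le a_2$ and $b_1\le b_2$, then both $(a_1,b_2)$ and $(a_2,b_1)$ belong to $\Gamma_v$}. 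Iterating this closure property on the finite set $\Gamma_v$ then produces the desired extremal elements.

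The closure property is checked using Proposition~\ref{prop:quantum_bruhat_order_type_A}, which (combined with $\ell(v\cdot(i,j))<\ell(v)$ for a quantum edge in type $A$) characterizes $(i,j)\in\Gamma_v$ by the condition that $v(i)>v(j)$ and $v(j)<v(l)<v(i)$ for all $i<l<j$. Writing the hypotheses (I) for $(a_1,b_1)$ and (II) for $(a_2,b_2)$, we exploit two distinguished intermediate indices. First, since $a_1<a_2\le k<b_1$, applying (I) to $l=a_2$ yields $v(b_1)<v(a_2)<v(a_1)$. Second, since $a_2\le k<b_1<b_2$, applying (II) to $l=b_1$ gives $v(b_2)<v(b_1)<v(a_2)$. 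Chaining these with (I) and (II) on the four subranges of the index set $\{a_i<l<b_j\}$ (namely $a_1<l<a_2$, $l=a_2$, $a_2<l<b_1$, $l=b_1$, $b_1<l<b_2$) quickly verifies that the strict decreasing pattern between $v(a_1)$ and $v(b_2)$ (for $(a_1,b_2)$), and between $v(a_2)$ and $v(b_1)$ (for $(a_2,b_1)$), holds throughout. The main technical point will just be this case analysis; no deeper tool is needed.

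For $\Gamma_v^{\llcurly(c,d)}$, observe that its defining conditions are the strict inequalities $c<a$ and $b<d$. If $(a_1,b_1)$ and $(a_2,b_2)$ both satisfy $c<a_i\le k<b_i<d$, then so do $(\min(a_1,a_2),\max(b_1,b_2))$ and $(\max(a_1,a_2),\min(b_1,b_2))$; hence the same closure argument, restricted to this subset, gives both a maximum and a minimum.
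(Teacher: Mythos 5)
Your proof is correct and takes essentially the same approach as the paper's: reduce to the closure property that for incomparable $(a_1,b_1),(a_2,b_2)\in\Gamma_v$ (with $a_1<a_2\le k<b_1<b_2$) the ``join'' $(a_1,b_2)$ and ``meet'' $(a_2,b_1)$ also lie in $\Gamma_v$, then verify this via the type-$A$ quantum edge criterion by producing the chain $v(a_1)>v(a_2)>v(b_1)>v(b_2)$ and checking the intermediate positions. The paper's proof is just terser, stating only this chain and leaving the short case analysis implicit.
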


\begin{proof} Due to the structure of $\Gamma(-\varpi_k)$, it suffices to show that, if $(p,q)$ and $(r,s)$ belong to $\Gamma_v$ or $\Gamma_v^{\llcurly (c,d)}$, where $p<r\le k<q<s$, then so do $(p,s)$ and $(r,q)$. We have $v(p)>v(r)>v(q)>v(s)$, which implies the mentioned statement.
\end{proof}

We fix $w\in S_n$, and use Lemma~\ref{lemmm} in the following constructions based on $\Gamma_w$, which is assumed to be non-empty. Define the sequence of roots $(p_1,q_1),\,\ldots,\,(p_L,q_L)$ recursively by
\begin{equation}\label{constr-r}(p_1,q_1):=\mbox{max}_{\preceq}\Gamma_w\,,\;\;\;\;(p_{l+1},q_{l+1}):=\mbox{max}_{\preceq}\Gamma_w^{\llcurly(p_l,q_l)}\,,\end{equation}
for $l=1,\ldots,L-1$, where $\Gamma_w^{\llcurly(p_L,q_L)}=\emptyset$. Also, let $(r,s):=\mbox{min}_{\preceq}\Gamma_w$. For $1\le i<j\le n$, let
\[d(i,j):=(\underbrace{0,\,\ldots,\,0}_{i-1\;\,{\rm times}},\underbrace{1,\,\ldots,\,1}_{j-i\;\,{\rm times}},\underbrace{0,\,\ldots,\,0}_{n-j\;\,{\rm times}} )\]
be the degree corresponding to the root $\alpha_{ij}$.

We consider the non-zero degrees $d=(d_1,\ldots,d_{n-1})$ which occur in the expansion of $[{\mathcal O}^{s_k}]\cdot [{\mathcal O}^w]$, i.e., for which $N_{s_k,w}^{v,d}\ne 0$ for some $v\in S_n$. We assume that $\Gamma_w\ne\emptyset$, because otherwise there are no non-zero degrees. The following is the main result about the mentioned degrees.

\begin{thm}\label{maxmindeg}
Among the considered degrees, there is a maximum and a minimum one, with respect to the componentwise order. They are
\[d(p_1,q_1)+\cdots+d(p_L,q_L)\;\;\;\;\mbox{and}\;\;\;\;d(r,s)\,,\]
respectively.
\end{thm}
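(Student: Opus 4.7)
The plan is to prove the theorem in two parts: first exhibit explicit admissible subsets realizing the claimed extremes, then show extremality via the structure of $\Gamma_w$ and the path-description of degrees.

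\emph{Realization.} For the minimum, take $A_{\min} := \{(r,s)\}$: since $(r,s) \in \Gamma_w$ gives a quantum edge $w \to wt_{rs}$, this is admissible with $\mathrm{down}(w,A_{\min}) = \alpha_{rs}^\vee$, corresponding to $d(r,s)$. For the maximum, take $A_{\max} := \{(p_1,q_1), \ldots, (p_L,q_L)\}$, listed in the order they appear in $\Gamma(-\varpi_k)$ (which coincides with $l = 1, \ldots, L$ because $p_1 < \cdots < p_L$). The strict nesting $p_1 < \cdots < p_L$ and $q_1 > \cdots > q_L$ ensures that at the $l$-th step the positions $p_l$, $q_l$, and all $l'' \in (p_l, q_l)$ are disjoint from the swap-positions of the earlier transpositions; hence the quantum-edge criterion of Proposition~\ref{prop:quantum_bruhat_order_type_A} at the current permutation reduces to the one at $w$, which holds by $(p_l, q_l) \in \Gamma_w$. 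An induction on $l$ then shows every step is quantum, giving total degree $\sum_l d(p_l, q_l)$.

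\emph{Extremality of the minimum.} Let $A$ be any admissible subset with non-zero degree and set $v := \mathrm{end}(w,A)$. It suffices to exhibit a single quantum step of $A$ whose label $(i_0, j_0)$ satisfies $(i_0, j_0) \succeq (r,s)$, because then $d(i_0, j_0) \ge d(r,s)$ is a term in the nonnegative sum $\mathrm{down}(w,A)$. I would argue that the first quantum step of $A$ already has its label in $\Gamma_w$: the preceding Bruhat portion has labels from $\Gamma(-\varpi_k)$ strictly before $(i_0, j_0)$ — with first coordinate at most $i_0$, and with second coordinate exceeding $j_0$ when the first coordinate equals $i_0$ — and Proposition~\ref{prop:quantum_bruhat_order_type_A} combined with the $\prec^r_{w(\cdot)}$-monotonicity of Corollary~\ref{ord-path-rev} prevents such swaps from creating a new usable quantum edge at $(i_0, j_0)$. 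The minimality of $(r,s)$ in $(\Gamma_w, \preceq)$ then forces $(i_0, j_0) \succeq (r, s)$.

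\emph{Extremality of the maximum.} By Proposition~\ref{alg-deg}, $d_m = |\{j \le m : v(j) < w(j)\}|$ for $m \le k$; and Corollary~\ref{ord-path-rev} implies that each such position $j$ must host at least one quantum step of the path, since the values at $j$ are $\prec^r_{w(j)}$-monotone and end strictly before $w(j)$ in the usual order, precluding any Bruhat step at position $j$. Writing these positions as $j_1 < \cdots < j_N \le m$, the desired bound $N \le |\{l : p_l \le m\}|$ would follow from an injection $a \mapsto (p_a, q_a)$ satisfying $p_a \le j_a$, and a symmetric argument handles $m \ge k$. The main obstacle is constructing this injection: one must show that whenever $N$ positions in $[1, k]$ host quantum steps in a given path, $\Gamma_w$ admits a $\llcurly$-chain of length at least $N$ whose starting positions are bounded by the $j_a$'s. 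This reduces to tracing, at each intermediate permutation of the path, how the no-intermediate condition of Proposition~\ref{prop:quantum_bruhat_order_type_A} interacts with the maximal $\preceq$-chain structure of $\Gamma_w$ given by Lemma~\ref{lemmm}.
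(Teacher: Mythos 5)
Your realization step is correct and matches the paper: the nesting $p_1<\cdots<p_L\le k<q_L<\cdots<q_1$ means the $(p_l,q_l)$ occur in increasing order along $\Gamma(-\varpi_k)$ and act on disjoint intervals of positions from inside out, so each step inherits its quantum status directly from $\Gamma_w$; and $\{(r,s)\}$ obviously works for the minimum. Your extremality argument for the minimum is morally right but informal: the crucial claim that the first quantum label must already lie in $\Gamma_w$ is precisely the ``Bruhat step does not create new quantum options among the later roots'' direction of what the paper isolates as Lemma~\ref{onestep}, and your appeal to $\prec^r$-monotonicity of Corollary~\ref{ord-path-rev} does not actually establish it, since that corollary is about the unique special path from $w$ to $v$, not about an arbitrary admissible prefix.

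The real gap is in the extremality of the maximum, which you yourself flag. Counting positions that host a quantum step via Proposition~\ref{alg-deg} and then asking for an injection into a $\llcurly$-chain of $\Gamma_w$ with matching position bounds is a harder combinatorial problem than the one actually at hand, and you leave it unresolved. The paper sidesteps it by proving a structural statement (Lemmas~\ref{onestep} and~\ref{qbchain}): along any admissible path from $w$, \emph{every} quantum label lies in $\Gamma_w$ (not merely in the $\Gamma$-set of the intermediate vertex), and the quantum labels $(p_1',q_1')<\cdots<(p_m',q_m')$ satisfy $(p_m',q_m')\llcurly\cdots\llcurly(p_1',q_1')$. Given this, the greedy recursion~\eqref{constr-r} immediately yields $(p_i',q_i')\preceq(p_i,q_i)$ for each $i$ and $m\le L$, hence $\sum_i d(p_i',q_i')\le\sum_l d(p_l,q_l)$ componentwise. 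This nesting lemma is the key idea missing from your argument; without it, neither the maximum bound nor, rigorously, the claim that the first quantum label is in $\Gamma_w$ is established. I would suggest proving the analogue of Lemma~\ref{onestep} directly (case analysis on whether the step is quantum or Bruhat, using Proposition~\ref{prop:quantum_bruhat_order_type_A}), after which your overall strategy becomes essentially the paper's.
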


\begin{exa}\label{exdeg}{\rm Consider $n=4$, $w=4321$, and $k=2$. We have $\Gamma_w=\{(1,3),\,(1,4),\,(2,3),\,(2,4)\}=\Gamma(-\varpi_k)$. Thus, by Theorem~\ref{maxmindeg}, the maximum and minimum degrees are $(1,2,1)$ and $(0,1,0)$, respectively.}
\end{exa}

\begin{rema} {\rm (1) In quantum cohomology, the minimum degree in the corresponding Chevalley formula~\cite{fawqps} is the same as in quantum $K$-theory, while the maximum degree is $d(p_1,q_1)$, cf. Theorem~\ref{maxmindeg}.} 

{\rm (2) The non-zero degrees which occur in the expansion of $[{\mathcal O}^{s_k}]\cdot [{\mathcal O}^w]$ generally do not form an interval (between the minimum and the maximum one). Indeed, continuing Example~\ref{exdeg}, it is easy to see that $(0,2,0)$ is not a degree in the corresponding expansion.}
\end{rema}

To prove Theorem~\ref{maxmindeg}, we need the following lemmas.

\begin{lem}\label{onestep} Consider an edge $v\xrightarrow{(i,j)}v\cdot(i,j)$ in $\QB(S_n)$ labeled by $(i,j)\in\Gamma(-\varpi_k)$, and the subset $A$ of the roots in $\Gamma(-\varpi_k)$ occurring after some root $\alpha\ge(i,j)$. Then we have
\[\Gamma_{v\cdot(i,j)}^A\subseteq\Gamma_v^{A,\,\llcurly(i,j)}\;\;\;\;\mbox{or}\;\;\;\;\Gamma_{v\cdot(i,j)}^A\subseteq\Gamma_v^{A}\,,\]
depending on the edge labeled $(i,j)$ being or not being a quantum edge, respectively.
\end{lem}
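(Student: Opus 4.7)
The plan is to fix $(p,q) \in \Gamma_{v'}^A$, where $v' := v \cdot (i,j)$, and do a case analysis on the relative positions of $\{p,q\}$ and $\{i,j\}$. Since both roots lie in $\Gamma(-\varpi_k)$ (so $p \le k < q$ and $i \le k < j$) and $(p,q) > (i,j)$ strictly in the linear order of $\Gamma(-\varpi_k)$, there are four exclusive sub-cases: (B) $p=i$ and $q<j$; (C) $p>i$ and $q=j$; (D1) $p>i$ and $q<j$; (D2) $p>i$ and $q>j$. A crucial preliminary observation is that $i \notin (p,q)$ in every sub-case, while $j \in (p,q)$ only in (D2); this controls exactly when the values $v'(l)$ on the open interval $(p,q)$ coincide with $v(l)$, and it lets me translate the quantum edge hypothesis $v' \to v' \cdot (p,q)$ into explicit inequalities among $v(i), v(j), v(p), v(q), v(l)$ via Proposition~\ref{prop:quantum_bruhat_order_type_A}, restated as $w(a) > w(b)$ together with $w(b) < w(l) < w(a)$ for every $l \in (a,b)$.

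For Case~1 ($v \to v'$ Bruhat, so $v(i) < v(j)$ and no $l \in (i,j)$ satisfies $v(i) < v(l) < v(j)$), sub-case (D1) is immediate since $v$ and $v'$ agree on all relevant positions. In sub-cases (B), (C), and (D2), I would combine the upper and lower bounds coming from the $v'$-quantum criterion with the Bruhat dichotomy $v(l) \le v(i)$ or $v(l) \ge v(j)$ for $l \in (i,j)$, applied at $l = p$, $l = q$, and (in (D2)) also at $l = j$, to force the relevant values into the window $(v(q),v(p))$ required by the $v$-quantum criterion for $v \to v \cdot (p,q)$.

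For Case~2 ($v \to v'$ quantum, so $v(j) < v(l) < v(i)$ for all $l \in (i,j)$), the punchline is that in each of (B), (C), (D2) exactly one of $p$ or $q$ lies in the open interval $(i,j)$, and evaluating the $v$-quantum inequality at that position contradicts the opposite order required by the $v'$-quantum hypothesis (specifically $v(q) > v(j)$ conflicts with $v(j) > v(q)$ needed in (B); $v(p) < v(i)$ conflicts with $v(p) > v(i)$ needed in (C) and (D2)); this eliminates all three sub-cases. The surviving sub-case (D1) transfers the quantum edge from $v'$ to $v$ trivially, and the inequalities $p > i$ and $q < j$ defining it are exactly the content of $(p,q) \llcurly (i,j)$.

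I expect the main bookkeeping obstacle to be sub-case (D2) of Case~1: since $j \in (p,q)$ and $v'(j) = v(i) \ne v(j)$, the $v'$-quantum criterion evaluated at $l = j$ reads $v(q) < v(i) < v(p)$ rather than involving $v(j)$, so I would need a separate argument, using the Bruhat criterion at $l = p$ (to get $v(p) > v(j)$) and at $l = j$ reconsidered through the remaining positions, to conclude $v(q) < v(j) < v(p)$ and thereby close the $v$-quantum criterion for the entry at $l = j$.
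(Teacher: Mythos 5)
Your proposal is correct and follows essentially the same strategy as the paper's proof: fix a root $(p,q)$ in $\Gamma_{v\cdot(i,j)}^A$, split into the same sub-cases according to how $(p,q)$ sits relative to $(i,j)$ (the paper collapses your (D1) into the immediately-dismissed $\llcurly$ case and then treats exactly your (B), (C), (D2)), rule out (B), (C), (D2) by contradiction when $v\to v\cdot(i,j)$ is a quantum edge, and in the Bruhat case combine the $v'$-quantum inequalities with the Bruhat dichotomy at the positions $p$, $q$ lying strictly between $i$ and $j$. Your anticipated difficulty in sub-case (D2) of Case 1 in fact dissolves smoothly: once Bruhat at $l=p$ gives $v(p)>v(j)$, the remaining bound $v(q)<v(j)$ follows directly from $v(q)<v(i)<v(j)$ (using the Bruhat hypothesis $v(i)<v(j)$), so no separate ``Bruhat criterion at $l=j$'' is needed and the entry-by-entry check closes.
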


\begin{proof}
Let $(a,b)\in \Gamma_{v\cdot(i,j)}^A$. If $(a,b)\llcurly(i,j)$, then the statement is obvious, and so we are left with the cases $i=a<b<j$ or $i<a<j\le b$. Assume first that $v\xrightarrow{(i,j)}v\cdot(i,j)$ is a quantum edge. The case  $i=a<b<j$ is easily ruled out, because it would imply $v(i)>v(j)>v(b)$, which contradicts the quantum property of the edge labeled by $(i,j)$. Similarly for the case $i<a<j=b$, and so we are left with the case $i<a<j<b$. But then the same quantum edge property implies $v(i)>v(a)>v(j)$, which makes it impossible to have $(a,b)\in \Gamma_{v\cdot(i,j)}$; so this case is ruled out, too. 

We now assume that $v\xrightarrow{(i,j)}v\cdot(i,j)$ is a cover of the Bruhat order. This means that there is no entry with value between $v(i)$ and $v(j)$ among $v(i+1),\ldots,v(j-1)$; we will use this fact implicitly below. We again start with the case $i=a<b<j$. We must have $v(b)<v(i)<v(j)$, which in turn implies $(a,b)\in \Gamma_{v}$. Similarly for the case $i<a<j=b$, and so we are left with the case $i<a<j<b$. The fact that  $(a,b)\in \Gamma_{v\cdot(i,j)}$ implies $v(b)<v(i)<v(j)<v(a)$, and then $(a,b)\in \Gamma_{v}$. 
\end{proof}

\begin{lem}\label{qbchain} Assume that we have a path in $\QB(S_n)$ starting at $w$ with edges labeled by roots $(i_1,j_1)<\cdots<(i_l,j_l)$ in $\Gamma(-\varpi_k)$, in this order. Then we have
\[\Gamma_{w\cdot(i_1,j_1)\cdots(i_l,j_l)}^{>(i_l,j_l)}\subseteq \Gamma_w^{>(i_l,j_l),\,\llcurly(i_s,j_s)}\;\;\;\;\mbox{or}\;\;\;\;
\Gamma_{w\cdot(i_1,j_1)\cdots(i_l,j_l)}^{>(i_l,j_l)}\subseteq \Gamma_w^{>(i_l,j_l)}\,,\]
depending on the path containing or not containing a quantum step, respectively, where in the first case $(i_s,j_s)$ is the label of the last quantum step.
\end{lem}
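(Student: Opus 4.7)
The proof proceeds by induction on the length $l$ of the path, with Lemma~\ref{onestep} supplying both the base case and the engine for the inductive step. The unifying idea is that at each edge $w^{(p-1)} \xrightarrow{(i_p,j_p)} w^{(p)}$ along the path, Lemma~\ref{onestep} can be applied with the \emph{same} target set $A = \{\alpha \in \Gamma(-\varpi_k) : \alpha > (i_l,j_l)\}$, since $(i_p,j_p) \le (i_l,j_l)$ and hence $(i_l,j_l) \ge (i_p,j_p)$, meeting the hypothesis of the lemma that $A$ consist of the roots after some $\alpha \ge (i_p,j_p)$.

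For the base case $l = 1$, Lemma~\ref{onestep} gives the conclusion directly. For the inductive step, set $w' := w\cdot(i_1,j_1)\cdots(i_{l-1},j_{l-1})$ and apply the inductive hypothesis to the subpath from $w$ to $w'$. Because $(i_l,j_l) > (i_{l-1},j_{l-1})$, one has the monotonicity inclusion $\Gamma_{w'}^{>(i_l,j_l)} \subseteq \Gamma_{w'}^{>(i_{l-1},j_{l-1})}$, and combining this with the inductive hypothesis yields
\[\Gamma_{w'}^{>(i_l,j_l)} \subseteq \Gamma_w^{>(i_l,j_l)} \qquad \text{or} \qquad \Gamma_{w'}^{>(i_l,j_l)} \subseteq \Gamma_w^{>(i_l,j_l),\,\llcurly(i_{s'},j_{s'})},\]
depending on whether the subpath has no quantum edge or has its last quantum edge at position $s' < l$; here one uses the tautological fact that any $\beta \in \Gamma_{w'}^{>(i_l,j_l)}$ automatically satisfies $\beta > (i_l,j_l)$, which tightens the weaker ``$>(i_{l-1},j_{l-1})$'' constraint supplied by the inductive hypothesis.

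Now apply Lemma~\ref{onestep} to the last edge $w' \xrightarrow{(i_l,j_l)} w^{(l)}$ with $A = \{\alpha > (i_l,j_l)\}$: if this edge is Bruhat we obtain $\Gamma_{w^{(l)}}^{>(i_l,j_l)} \subseteq \Gamma_{w'}^{>(i_l,j_l)}$, while if it is quantum we obtain $\Gamma_{w^{(l)}}^{>(i_l,j_l)} \subseteq \Gamma_{w'}^{>(i_l,j_l),\,\llcurly(i_l,j_l)}$. A routine case analysis on the four combinations (the earlier subpath has or lacks a quantum edge, and the last edge is Bruhat or quantum) then produces the claimed inclusion, with the index $s$ being $s'$ when the last edge is Bruhat and $l$ when it is quantum. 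The only delicate bookkeeping point is that when a quantum step at $s' < l$ is followed by a Bruhat last step, the constraint $\llcurly(i_{s'},j_{s'})$ must be carried along unchanged; this is automatic, since the Bruhat case of Lemma~\ref{onestep} imposes no new $\llcurly$ constraint, so the prior one, already living inside $\Gamma_w$ by the inductive hypothesis, survives. I do not anticipate any substantive obstacle here: the combinatorial core of the argument is already contained in Lemma~\ref{onestep}, and this lemma is essentially its iteration.
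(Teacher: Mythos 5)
Your proof is correct and takes the same approach as the paper, whose entire argument is the one line ``We iterate the result in Lemma~\ref{onestep}''; you have simply fleshed this iteration out into a careful induction on $l$, with the key observations being that the target set $A = \{\alpha > (i_l,j_l)\}$ satisfies the hypothesis of Lemma~\ref{onestep} at every edge (since $(i_p,j_p) \le (i_l,j_l)$), and that the $\llcurly$ constraint from the inductive hypothesis can be tightened or replaced appropriately in each of the four combined cases.
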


\begin{proof} We iterate the result in Lemma~\ref{onestep}. 
\end{proof}

\begin{proof}[Proof of Theorem~{\rm \ref{maxmindeg}}] By \eqref{qkchev-f}, the terms in the expansion of $[{\mathcal O}^{s_k}]\cdot [{\mathcal O}^w]$ are indexed by paths in $\QB(S_n)$ starting at $w$ with edges labeled by roots in $\Gamma(-\varpi_k)$, in the corresponding order $<$. Fix such a path, and let $(p_1',q_1')<\cdots<(p_m',q_m')$ be the labels of its quantum steps; then the degree corresponding to this path is $d(p_1',q_1')+\cdots+d(p_m',q_m')$. 

By Lemma~\ref{qbchain}, all $(p_i',q_i')$ are in $\Gamma_w$, and we have $(p_m',q_m')\llcurly\cdots\llcurly(p_2',q_2')\llcurly(p_1',q_1')$. By the construction~\eqref{constr-r}, we have $(p_1',q_1')\preceq(p_1,q_1)$. Combining the above facts, we have $(p_2',q_2')\llcurly (p_1,q_1)$, and by invoking again~\eqref{constr-r}, we derive $(p_2',q_2')\preceq(p_2,q_2)$. Continuing in the same way, we deduce $m\le L$, and $(p_i',q_i')\preceq(p_i,q_i)$ for all $i=1,\ldots,m$.

On another hand, it is easy to see that the path starting at $w$ with labels $(p_1,q_1),\,\ldots,\,(p_L,q_L)$, in this order, is indeed a path in $\QB(S_n)$ whose steps are all quantum ones; note that $(p_L,q_L)\llcurly\cdots\llcurly(p_2,q_2)\llcurly(p_1,q_1)$. This concludes the proof related to the maximum degree. The statement about the minimum degree is immediate based on the corresponding construction.
\end{proof}

\begin{cor}\label{max-deg} Among the terms in the quantum $K$-Chevalley formulas for the expansion of $[{\mathcal O}^{s_k}]\cdot [{\mathcal O}^w]$, where $k\in I$ is fixed and $w\in W$, there is a maximum degree (with respect to the componentwise order), namely
\[d_{\max}=(1,2,\ldots,\overline{k}-1,\underbrace{\overline{k},\ldots,\overline{k},}_{n+1-2\overline{k}\;\, times} \overline{k}-1,\ldots,2,1)\,,\]
where $\overline{k}:=\min(k,n-k)$. 
The maximum is attained (among other instances) for any $w,v$ with $w(i)=n+1-i$ and $v(i)=i$ for $i\le\overline{k}$ or $i>n-\overline{k}$, while $v(i)=w(i)$ for $\overline{k}<i\le n-\overline{k}$. 
The maximum total degree is $k(n-k)$.
\end{cor}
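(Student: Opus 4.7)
The proof has two parts: a componentwise upper bound valid for every $w\in W$, and a matching realization with $w=w_\circ$.

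For the upper bound, fix any $w\in W$ and let $(p_L,q_L)\llcurly\cdots\llcurly(p_1,q_1)$ be the chain in $\Gamma_w$ produced by~\eqref{constr-r}, whose associated degree $\sum_{l=1}^{L} d(p_l,q_l)$ is, by Theorem~\ref{maxmindeg}, the maximum degree appearing in $[\mathcal{O}^{s_k}]\cdot[\mathcal{O}^w]$. The strict inequalities of $\llcurly$ force $p_1<\cdots<p_L\le k$ and $k<q_L<\cdots<q_1\le n$, hence $L\le\overline{k}$. Since $d(p_l,q_l)$ contributes $1$ at position $i$ precisely when $p_l\le i<q_l$, the sum at position $i\le k$ is bounded by $|\{l:p_l\le i\}|\le\min(i,L)\le\min(i,\overline{k})$, and at position $i\ge k$ by $|\{l:q_l>i\}|\le\min(n-i,L)\le\min(n-i,\overline{k})$. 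Using $n-i\ge n-k\ge\overline{k}$ for $i\le k$ and $i\ge k\ge\overline{k}$ for $i\ge k$, both bounds collapse to $\min(i,n-i,\overline{k})=d_{\max}(i)$.

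For the realization, the plan is to take $w=w_\circ$ with the chain $(p_l,q_l):=(l,n+1-l)$, $l=1,\ldots,\overline{k}$. A direct induction shows that successive applications of these transpositions produce
\[w_l=(1,2,\ldots,l,\ n-l,n-l-1,\ldots,l+1,\ n-l+1,\ldots,n)\,,\]
so $w_{\overline{k}}$ coincides with the $v$ described in the corollary. To see that each $w_{l-1}\xrightarrow{(l,n+1-l)}w_l$ is a quantum edge in $\QB(S_n)$, apply Proposition~\ref{prop:quantum_bruhat_order_type_A}: in the circular order $\prec_{n+1-l}$, the endpoints $w_{l-1}(l)=n+1-l$ and $w_{l-1}(n+1-l)=l$ occupy positions $1$ and $2l$, while for every intermediate $m\in\{l+1,\ldots,n-l\}$ the value $w_{l-1}(m)=n+1-m$ sits at position $n+1+l-m\ge 2l+1$, so no intermediate value lies strictly between the endpoints. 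A short length count gives $\ell(w_{l-1})-\ell(w_l)=\binom{n-2l+2}{2}-\binom{n-2l}{2}=2(n-2l)+1=2\langle\rho,\alpha_{l,n+1-l}^{\vee}\rangle-1$, which is exactly the quantum length decrement.

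The labels $(1,n),(2,n-1),\ldots,(\overline{k},n+1-\overline{k})$ appear in increasing order inside $\Gamma(-\varpi_k)$, so they define an admissible subset $A^{*}\in\mathcal{A}(w_\circ,\Gamma(-\varpi_k))$ with $\mathrm{end}(w_\circ,A^{*})=v$ and $\mathrm{down}(w_\circ,A^{*})=\sum_{l=1}^{\overline{k}}d(l,n+1-l)$; the $i$-th entry of this sum is $|\{l:l\le i<n+1-l\}|=\min(i,n-i,\overline{k})=d_{\max}(i)$. Theorem~\ref{qk-coeff} then forces $N^{v,d_{\max}}_{s_k,w_\circ}=\pm 1$, establishing attainment. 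Finally, $\sum_{i=1}^{n-1}d_{\max}(i)=2\binom{\overline{k}}{2}+\overline{k}(n+1-2\overline{k})=\overline{k}(n-\overline{k})=k(n-k)$. The main obstacle is the quantum-edge verification along the chain from $w_\circ$: both that each transposition yields an edge of $\QB(S_n)$ via the circular-order criterion, and that the length decrement matches the quantum formula; the rest is bookkeeping combined with Theorems~\ref{maxmindeg} and~\ref{qk-coeff}.
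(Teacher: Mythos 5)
Your proof is correct, and its overall architecture matches the paper's: a componentwise upper bound coming from the construction~\eqref{constr-r} plus the bound $L\le\overline{k}$, followed by an explicit realization. The paper's own proof is a one-liner asserting $L\le\overline{k}$ and $(p_l,q_l)\preceq(l,n+1-l)$ and then saying ``this implies the stated result,'' silently relying on the monotonicity of $d(\cdot,\cdot)$ with respect to $\preceq$ and omitting the attainment check altogether. You instead bound each component $d_{\max}(i)$ directly from the distinctness of the $p_l$ and $q_l$ (equivalent to, and slightly more hands-on than, the paper's $(p_l,q_l)\preceq(l,n+1-l)$), and then give a genuinely verified realization at $w=w_\circ$: you compute each $w_l$, confirm via Proposition~\ref{prop:quantum_bruhat_order_type_A} and a length count that every step $w_{l-1}\xrightarrow{(l,n+1-l)}w_l$ is a quantum edge, check that the labels occur in increasing order in $\Gamma(-\varpi_k)$ so they form an admissible subset, note that $v\not\in w_\circ W_{I\setminus\{k\}}$, and invoke Theorem~\ref{qk-coeff} to get a nonzero coefficient. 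This is more complete than what the paper actually writes. One small caveat: the corollary also asserts attainment ``for any $w,v$'' satisfying the stated pattern, and your argument only exhibits the single instance $w=w_\circ$; that is enough to establish that $d_{\max}$ is attained, but the fuller claim about the family would require an extra (entirely similar) check for the more general $w$.
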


\begin{proof} If $w,v$ are as stated, then there is a path in $\QB(S_n)$ from $w$ to $v$ consisting only of quantum edges, which are labeled by $(1,n),\,\ldots,\,(\overline{k},n+1-\overline{k})$. On another hand, in the construction \eqref{constr-r} corresponding to the quantum $K$-Chevalley formula for $[{\mathcal O}^{s_k}]\cdot [{\mathcal O}^w]$, it is clear that we have $L\le\overline{k}$ and $(p_1,q_1)\preceq(1,n),\,\ldots,\,(p_L,q_L)\preceq(L,n+1-L)$. This implies the stated result.
\end{proof}

\begin{rema}
{\rm The maximum total degree in Corollary~\ref{max-deg} is equal to the (complex) dimension of the Grassmannian consisting of the $k$-dimensional subspaces in $\bC^n$, that is, the length of the maximum element in $W^{I\setminus\{k\}}$. }
\end{rema}

\end{document}